\newtheorem{theorem}{Theorem}
\newtheorem{definition}{Definition}
\newtheorem{lemma}{Lemma}
\newtheorem{corollary}{Corollary}
\newtheorem{notation}{Notation}
\newcommand{\al}{\alpha}
\newcommand{\ale}{{\alpha_1}}
\newcommand{\alz}{\alpha_2}
\newcommand{\als}{\alpha'}
\newcommand{\bes}{\beta'}
\newcommand{\bez}{\beta_2}
\newcommand{\bet}{\tilde{\beta}}
\newcommand{\alt}{\tilde{\alpha}}
\newcommand{\de}{\delta}
\newcommand{\be}{\beta}
\newcommand{\bee}{{\beta_1}}
\newcommand{\ga}{\gamma}
\newcommand{\vt}{\vartheta}
\newcommand{\vte}{\vartheta_1}
\newcommand{\vti}{\vartheta_i}
\newcommand{\di}{d_i}
\newcommand{\dipe}{d_{i+1}}
\newcommand{\vtn}{\vartheta_0}
\newcommand{\vtz}{\vartheta_2}
\newcommand{\oti}{\overline{\theta}_i}
\newcommand{\ot}{\overline{\theta}}
\newcommand{\om}{\omega}
\newcommand{\Om}{\Omega}
\newcommand{\und}{\;\&\;}
\newcommand{\imp}{\Rightarrow}
\newcommand{\ol}{\overline}
\newcommand{\oa}{\overline{\al}}
\newcommand{\ob}{\overline{\be}}
\newcommand{\wpo}{\mathbb{wpo}}
\renewcommand{\S}{\mathbb{S}}
\newcommand{\N}{\mathbb{N}}
\newcommand{\rca}{\mathsf{RCA}_0}
\newcommand{\aca}{\mathsf{ACA}_0}
\newcommand{\pica}{\Pi^1_1\mathsf{\text{-}CA}_0}
\newenvironment{proof}{\textit{Proof.}}{\hfill$\square$}
\title{Ordinal notation systems corresponding to Friedman's linearized well-partial-orders with gap-condition}
\author{Michael Rathjen\thanks{rathjen@maths.leeds.ac.uk, Department of Pure Mathematics, University of Leeds, Leeds LS2 9JT, England},
Jeroen Van der Meeren\thanks{jeroen.vandermeeren@ugent.be, Department of Mathematics, Ghent University, Krijgs\-laan 281, 9000 Gent, Belgium},
Andreas Weiermann\thanks{andreas.weiermann@ugent.be, Department of Mathematics, Ghent University, Krijgs\-laan 281, 9000 Gent, Belgium}
}
\begin{document}
\maketitle
\begin{abstract}
In this article we investigate whether the addition-free theta functions
form a canonical notation system for the linear versions of Friedman's
well-partial-orders with the so-called gap-condition over a finite set of labels. Rather surprisingly, we can show
this is the case for two labels, but not for more than two labels. To this end, we determine the order type of the
notation systems for addition-free theta functions in terms of ordinals
less than $\varepsilon_0$.
We further show that the maximal order type of the Friedman ordering
can be obtained by a certain ordinal notation system which
is based on specific binary theta functions.
\end{abstract}

\section{Introduction}
A major theme in proof theory is to provide natural independence results for formal systems for reasoning about mathematics.
The most prominent system in this respect is first order Peano arithmetic, or almost equivalently its second order version $\aca$.
Providing natural independence results for stronger systems turned out to be rather difficult. The strongest system considered in reverse mathematics \cite{SOSOA} is $\pica$ which formalizes full $\Pi^1_1$-comprehension (with paramters) over $\rca$.
Buchholz \cite{BuchholzHydra} provided a natural hydra game for $\pica$ but this follows closely a path which is delineated by the classification of the provably recursive functions in terms of a corresponding Hardy hierarchy. Harvey Friedman \cite{simpsonfinitetrees} obtained a spectacular independence result for $\pica$ by considering well-quasi-orders on labeled trees on which he imposed a so-called gap-condition. It is still open to classify the strength of Friedman's assertion for the case that the set of labels consists of $n$ elements where $n$ is fixed from the outside. Nowadays it is known that the proof-theoretic strength of a well-quasi-order-principle can be measured in terms of the maximal order type of the well-quasi-order under consideration.
The maximal order type for the Friedman ordering is known for $n=1$ by results of Schmidt and Friedman. Recently the case $n=2$ has been settled and the case for $n\geq 3$ seems to be possible to obtain.
It turned out that the maximal order type for $n=2$ can be expressed using higher collapsing functions $\vartheta_0$ and $\vartheta_1.$

As a preliminary step in classifying the general case it seems natural to classify the situation where trees are replaced by sequences over a finite set of cardinality $n$.
The hope is that the simpler case indicates how to deal with the general case of trees.
Investigations on finite sequences with respect to the Friedman ordering have been undertaken by Sch\"utte and Simpson \cite{schuttesimpson}.
They showed how the Friedman ordering can be reduced to suitably nested versions of the Higman ordering \cite{Higman}.
Moreover they considered the corresponding Buchholz-style ordinal notation system in which the addition function has been dropped.
Curiously this lead to an ordinal notation system which in the limit (for unbounded $n$) reached $\epsilon_0$.
It is quite natural to consider finite sequences as iterated applications of unary functions and it is quite natural to ask whether the ordinal notation system which is based on $n$ collapsing functions (which in \cite{schuttesimpson} are denoted by $\pi_0,\ldots,\pi_{n}$) generates the maximal order type for the Friedman ordering $S_n$ for sequences over a set with $n$ elements. But it turns out that this is not the case: to produce the maximal order type for $S_n$ one needs the functions $\pi_0,\ldots,\pi_{2n}$.
It is known that the so called theta functions $\theta_i$ grow more quickly than the functions $\pi_i$ and it is natural whether their addition-free analogues $\vartheta_0,\ldots,\vartheta_{n}$ generate the maximal order type of $S_n$. For $n=2$ this turned out to be true and so one would expect that this would generalize to $n\geq 3$.
Quite surprisingly this is again not the case.
To obtain the maximal order type of $S_n$ one requires the functions $\vartheta_0,\ldots,\vartheta_{2\cdot n-3}$.

So the question remains whether $S_n$ can be realized by a suitable choice of unary functions. It turns out that  this, as we will show, is indeed possible using specific binary theta functions. However, with unary functions the question is still open.

In a sequel project, we intend to determine the relationship between other ordinal notation systems without addition (e.g. ordinal diagrams \cite{Takeuti}, Gordeev-style notation systems \cite{GordeevAML1989} and non-iterated $\vartheta$-functions \cite{prooftheoryofimpredicativesubsystemsofanalysis,WilkenSimultanetheta}) with the systems used in this article.




\section{Preliminaries}
\subsection{Well-partial-orders}

Well-partial-orders are the natural generalizations of well-orders. They have applications in computer science, commutative algebra and logic.

\begin{definition}
A \textbf{well-partial-order} (hereafter $\wpo$) is a partial order that is well-founded and does not admit infinite antichains. Hence, it is a partial order $(X,\leq_X)$ such that for every infinite sequence $(x_i)_{i<\omega}$ in $X$ there exist two indices $i<j$ such that $x_i \leq_X x_j$. If the ordering is clear from the context, we do not write the subscript $X$.
\end{definition}

$\wpo$'s appear everywhere in mathematics. For example, they are the main ingredients in Higman's theorem \cite{Higman}, Graph Minor theorem \cite{graphtheorydiestel}, Fra\"iss\'e's order type conjecture \cite{laver} and Kruskal's theorem \cite{Kruskal}. The latter is used in field of term rewriting systems.

\medskip

In this paper, we are interested in $\wpo$'s with the so-called gap-condition introduced in \cite{simpsonfinitetrees}. We are especially interested in the linearized version, which is already studied by Sch\"utte and Simpson \cite{schuttesimpson} (see subsection \ref{Wpo with gap-condition} for more information). With regard to these $\wpo$'s, we want to study ordinal notation systems which correspond to their maximal order types and maximal linear extensions.

\begin{definition} The \textbf{maximal order type} of the $\wpo$ $(X,\leq_X)$ is equal to $\sup\{\alpha$: $\leq_X\subseteq \preceq$, $\preceq$ is a well-order on $X$ and $otype(X,\preceq)=\alpha\}$. We denote this ordinal by $o(X,\leq_X)$ or by $o(X)$ if the ordering is obvious from the context.
\end{definition}

The following theorem by de Jongh and Parikh \cite{dejonghandparikh} shows that this supremum is actually a maximum.

\begin{theorem}[de Jongh and Parikh \cite{dejonghandparikh}]\label{dejonghandparikh1} Assume that $(X,\leq_X)$ is a $\wpo$. Then there exists a well-order $\preceq$ on $X$ which is an extension of $\leq_X$ such that $otype(X,\preceq) = o(X,\leq_X)$.
\end{theorem}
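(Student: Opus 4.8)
The plan is to reduce the statement to a recursive description of the maximal order type and then to build, by transfinite recursion, a linear extension attaining it.

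First, every linear extension $\preceq$ of a $\wpo$ $(X,\leq_X)$ is automatically a well-order: an infinite $\preceq$-descending sequence $x_0\succ x_1\succ\cdots$ would, by the $\wpo$ property, contain indices $i<j$ with $x_i\leq_X x_j$, hence $x_i\preceq x_j$, contradicting $x_j\prec x_i$. Thus $o(X,\leq_X)$ is the supremum of a set of ordinals, hence an ordinal; write $\alpha:=o(X,\leq_X)$. The engine of the proof is the following lemma, which I would establish first: \emph{every nonempty $\wpo$ $R$ has a minimal element $m$ with $1+o(R\setminus\{m\})=o(R)$.} The minimal elements of $R$ are pairwise incomparable, hence form a finite antichain $m_1,\dots,m_k$. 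For the inequality ``$\leq$'': the $\preceq$-least element of any linear extension $\preceq$ of $R$ is a minimal element of $R$ (anything strictly below it would be $\prec$ it), say $m_j$, so $otype(R,\preceq)=1+otype(R\setminus\{m_j\},\preceq)\leq 1+o(R\setminus\{m_j\})$; taking the supremum over $\preceq$ yields $o(R)\leq\max_j\bigl(1+o(R\setminus\{m_j\})\bigr)$. For ``$\geq$'': prepending $m_j$ to a linear extension of $R\setminus\{m_j\}$ gives a linear extension of $R$, so, $\gamma\mapsto 1+\gamma$ being continuous, $1+o(R\setminus\{m_j\})\leq o(R)$ for each $j$. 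Hence $o(R)=\max_j\bigl(1+o(R\setminus\{m_j\})\bigr)$, attained at some $m$. (A parallel analysis with $X_{\not\geq a}:=\{x\in X:a\not\leq_X x\}$ in place of $R\setminus\{m\}$ yields the familiar recursion $o(X)=\sup_{a\in X}\bigl(o(X_{\not\geq a})+1\bigr)$, and in particular $o(X_{\not\geq a})<o(X)$.)

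Using the lemma I would build the extremal extension from below. Define recursively a $\subseteq$-decreasing chain $X=R_0\supseteq R_1\supseteq\cdots$ and elements $x_\beta$: at a successor stage, while $R_\beta\neq\emptyset$, pick (by the lemma) a minimal element $x_\beta$ of $R_\beta$ with $1+o(R_\beta\setminus\{x_\beta\})=o(R_\beta)$ and put $R_{\beta+1}:=R_\beta\setminus\{x_\beta\}$; at a limit $\lambda$ put $R_\lambda:=\bigcap_{\beta<\lambda}R_\beta$. Since each $x_\beta$ is removed as a minimal element, every set $\{x_\gamma:\gamma<\beta\}$ is $\leq_X$-downward closed, and $(x_\gamma)_{\gamma<\beta}$ is a linear extension of $(\{x_\gamma:\gamma<\beta\},\leq_X)$. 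The successor choice immediately preserves the invariant $\beta+o(R_\beta)=\alpha$, because $\beta+o(R_\beta)=\alpha$ together with $1+o(R_{\beta+1})=o(R_\beta)$ gives $(\beta+1)+o(R_{\beta+1})=\alpha$. If one can also establish $\lambda+o(R_\lambda)=\alpha$ at limit stages, the invariant holds throughout; then $R_\beta=\emptyset$ iff $o(R_\beta)=0$ iff $\beta=\alpha$, so the process runs through all stages below $\alpha$ and halts at $\alpha$ with $R_\alpha=\emptyset$, and $(x_\beta)_{\beta<\alpha}$ is the sought linear extension, of order type $\alpha=o(X,\leq_X)$.

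The main obstacle, and what I take to be the real content of the theorem, is exactly the limit stages: the invariant is \emph{not} preserved automatically. For $X=\N\times\N$ under the product order one has $o(X)=\omega^2$, yet the rule ``remove a minimal element realising the maximum'' permits enumerating $X$ along its antidiagonals, exhausting $X$ already at stage $\omega$ and producing a linear extension of order type $\omega<\omega^2$; here $\beta+o(R_\beta)=\alpha$ holds at every finite $\beta$ but breaks at $\beta=\omega$. One therefore has to run the recursion with foresight: fix beforehand a strictly increasing continuous sequence $(\alpha_\iota)_{\iota<\mathrm{cf}(\alpha)}$ cofinal in $\alpha$, with $\alpha_0=0$ and each increment $\alpha_{\iota+1}\ominus\alpha_\iota$ of a controlled (say, additively indecomposable) form, where $\mu\ominus\nu$ denotes the unique $\delta$ with $\nu+\delta=\mu$; at stage $\iota$ one peels off a $\leq_X$-downward closed subset whose own maximal order type is exactly $\alpha_{\iota+1}\ominus\alpha_\iota$, all the while keeping the remaining poset at maximal order type $\alpha\ominus\alpha_{\iota+1}$. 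The delicate verifications are then (i) that such a downward closed piece can be peeled so that the remainder's order type drops by precisely the prescribed amount, and (ii) a semicontinuity statement showing the remainder's order type does not collapse at the limit stages of this sequence. I expect (ii) to be the crux and would attack it by a secondary transfinite induction reducing each limit stage to smaller instances of the key lemma, with the hardest cases being those of countable cofinality, where $\alpha\ominus\alpha_\iota$ need not strictly decrease and a direct combinatorial argument exploiting the finiteness of the antichains of minimal elements appears to be needed.
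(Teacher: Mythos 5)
The paper does not prove this statement at all --- it is quoted from de Jongh and Parikh --- so the only question is whether your argument stands on its own, and it does not: it stops exactly where the theorem begins. What you actually establish is correct and worth having: every linear extension of a $\wpo$ is a well-order, the minimal elements form a finite nonempty antichain, and $o(R)=\max_j\bigl(1+o(R\setminus\{m_j\})\bigr)$ with the maximum attained; your $\N\times\N$ example is also a genuine counterexample showing that the greedy ``remove a maximizing minimal element'' recursion loses the invariant $\beta+o(R_\beta)=\alpha$ at stage $\omega$. But having diagnosed the failure, you replace the proof by a plan: steps (i) and (ii) of your final paragraph are asserted, not proved, and they \emph{are} the theorem. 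Step (i) already presupposes that for a prescribed $\delta$ one can find a downward closed $D\subseteq X$ with $o(D)$ \emph{exactly} $\delta$ and with $o(X\setminus D)$ dropping by exactly $\delta$; the second half of that is an instance of the very attainment statement you are trying to prove (nothing you have shown rules out that every such peeling overshoots, i.e.\ that $o(X\setminus D)<\alpha\ominus(\alpha_\iota+\delta)$ for every admissible $D$). Step (ii), the non-collapse at limit stages, is likewise only announced as something you ``expect'' to handle. A proof that ends with ``the delicate verifications are then (i) and (ii)'' has not proved the theorem.

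Concretely, the missing content is the limit case of an induction on $\alpha=o(X)$. The successor case is free: a successor ordinal which is the supremum of a set of ordinals is its maximum, so some linear extension already has type $\alpha$. When $\alpha$ is a limit, the standard de Jongh--Parikh argument does not iterate single-point removals at all; it works with the sets $X_{\not\geq a}=\{x\in X: a\not\leq_X x\}$, proves $o(X)=\sup_{a\in X}\bigl(o(X_{\not\geq a})+1\bigr)$ together with the subadditivity $o(A\cup B)\leq o(A)\oplus o(B)$ for coverings by downward/upward closed pieces, and then splits on whether $\alpha$ is additively indecomposable: if $\alpha=\mu+\nu$ with $\mu,\nu<\alpha$ one finds a downward closed $D$ with $o(D)\geq\mu$ and $o(X\setminus D)\geq\nu$ and concatenates maximal extensions obtained from the induction hypothesis; if $\alpha$ is indecomposable one shows some $X_{\not\geq a}$ already has $o(X_{\not\geq a})+1=\alpha$ or reduces to a cofinal family handled by the induction hypothesis. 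None of this machinery (in particular the $\oplus$-subadditivity, which is what prevents the ``collapse'' you worry about) appears in your proposal, and without it the construction you describe cannot be carried out. Beware also that even your parenthetical recursion $o(X)=\sup_a\bigl(o(X_{\not\geq a})+1\bigr)$ is not quite ``parallel'': the inequality $o(X_{\not\geq a})+1\leq o(X)$ already uses attainment for $X_{\not\geq a}$ (or an induction), since $\sup_{\gamma\in S}(\gamma+1)=\sup S$ when the supremum of $S$ is a limit not belonging to $S$.
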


\begin{definition} Let $X$ be a $\wpo$. Every well-order $\preceq$ on $X$ that satisfies Theorem \ref{dejonghandparikh1} is called a \textbf{maximal linear extension}.
\end{definition}

The following definition and lemma are very useful.

\begin{definition}
A \textbf{quasi-embedding} $e$ from the partial order $(X,\leq_X)$ to the partial order $(Y,\leq_Y)$ is a mapping such that for all $x,x'\in X$, if $e(x)\leq_Y e(x')$, then $x\leq_X x'$ holds.
\end{definition}

\begin{lemma}\label{quasi-embedding}
Assume that $e$ is a quasi-embedding from the partial order $X$ to the partial order $Y$. If $Y$ is a $\wpo$, then $X$ is also a $\wpo$ and $o(X) \leq o(Y)$.
\end{lemma}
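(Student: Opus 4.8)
The plan is to prove the two assertions in turn, reducing the order-type bound to a standard fact about extending linear orders on subsets.

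First observe that $e$ is automatically injective: if $e(x)=e(x')$, then $e(x)\leq_Y e(x')$ and $e(x')\leq_Y e(x)$, so the quasi-embedding property gives $x\leq_X x'$ and $x'\leq_X x$, hence $x=x'$ by antisymmetry. That $X$ is a $\wpo$ is then immediate from the sequence characterization in the definition of $\wpo$: given an infinite sequence $(x_i)_{i<\omega}$ in $X$, applying the $\wpo$ property of $Y$ to $(e(x_i))_{i<\omega}$ produces indices $i<j$ with $e(x_i)\leq_Y e(x_j)$, and the quasi-embedding property yields $x_i\leq_X x_j$. So $X$ has no infinite bad sequence.

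For $o(X)\leq o(Y)$ it suffices, by the definition of $o(X)$ as a supremum, to show $otype(X,\preceq)\leq o(Y)$ for every well-order $\preceq$ extending $\leq_X$ (so Theorem \ref{dejonghandparikh1} is not even needed for this direction). Fix such a $\preceq$ and transport it along the injection $e$ to the linear order $\sqsubseteq_0$ on the image $e(X)$ defined by: $e(x)\sqsubseteq_0 e(x')$ if and only if $x\preceq x'$. Then $e$ is an order-isomorphism $(X,\preceq)\cong(e(X),\sqsubseteq_0)$, and $\sqsubseteq_0$ extends $\leq_Y$ restricted to $e(X)$, since $e(x)\leq_Y e(x')$ implies $x\leq_X x'$, hence $x\preceq x'$, hence $e(x)\sqsubseteq_0 e(x')$. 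The crucial step is the standard lemma that a linear extension of the induced order on a subset of a partial order can be extended to a linear extension of the whole partial order; granting it, let $\sqsubseteq$ be such an extension of $\sqsubseteq_0$ to all of $(Y,\leq_Y)$. Since $Y$ is a $\wpo$, $\sqsubseteq$ is well-founded (an infinite $\sqsubseteq$-descending chain would be a bad sequence in $Y$), hence a well-order with $otype(Y,\sqsubseteq)\leq o(Y)$. As $\sqsubseteq$ restricted to $e(X)$ equals $\sqsubseteq_0$ (both are linear on $e(X)$ and $\sqsubseteq_0\subseteq\sqsubseteq$) and $e(X)\subseteq Y$, we obtain $otype(X,\preceq)=otype(e(X),\sqsubseteq_0)\leq otype(Y,\sqsubseteq)\leq o(Y)$, as required.

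The one point requiring genuine work is the embedded lemma, which I would derive from Szpilrajn's order-extension theorem. With $A\subseteq Y$ and $\ell$ a linear order on $A$ extending $\leq_Y$ restricted to $A$, form the transitive closure $R$ of $\leq_Y\cup\ell$ on $Y$. One checks that $R$ is antisymmetric: in a putative nontrivial $R$-cycle, merge consecutive edges of the same type so that $\leq_Y$-edges and $\ell$-edges alternate; since both endpoints of every $\ell$-edge lie in $A$, a short counting argument forces every vertex of the cycle into $A$, whence each $\leq_Y$-edge of the cycle lies in $\leq_Y$ restricted to $A$, hence in $\ell$, so the cycle collapses to an $\ell$-cycle, contradicting antisymmetry of $\ell$. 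Thus $R$ is a partial order refining $\leq_Y$ with $R$ restricted to $A$ equal to $\ell$ (linearity of $\ell$ forces equality), and any Szpilrajn linear extension of $R$ is the desired linear extension of $\leq_Y$ that restricts to $\ell$ on $A$. This cycle analysis is the main (indeed the only) obstacle; everything else is routine bookkeeping.
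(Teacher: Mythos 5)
Your proof is correct, and it is worth noting that the paper itself states Lemma \ref{quasi-embedding} without any proof, treating it as a standard fact of $\wpo$ theory; so there is nothing in the source to compare against line by line. Your first part (mapping a putative bad sequence of $X$ to a bad sequence of $Y$) is exactly the routine argument. For the order-type bound, your reduction is sound: it does suffice to bound $otype(X,\preceq)$ for an arbitrary well-ordered linear extension $\preceq$ of $\leq_X$, since $o(X)$ is defined as a supremum; the transported order $\sqsubseteq_0$ on $e(X)$ does extend $\leq_Y$ restricted to $e(X)$ precisely because $e$ is a quasi-embedding; and every linear extension of a $\wpo$ is automatically a well-order. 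The one genuinely nontrivial ingredient is the relative Szpilrajn lemma (extending a linear order on a subset, compatible with the ambient partial order, to a linear extension of the whole), and your cycle argument for the antisymmetry of the transitive closure of $\leq_Y\cup\,\ell$ is complete: after merging, an alternating cycle has even length, so every vertex lies on an $\ell$-edge and hence in $A$, collapsing the cycle into $\ell$. For comparison, the other standard route in the literature (de Jongh--Parikh, Schmidt) identifies $o(X)$ with the rank of the tree of finite bad sequences of $X$ and observes that $e$ maps bad sequences to bad sequences, giving $o(X)\leq o(Y)$ without any order-extension machinery; that argument is shorter but relies on a characterization of $o$ that this paper does not state, whereas yours works directly from the definitions given.
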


\begin{notation}
Let $\al$ be an ordinal. Define $\omega_0[\al]$ as $\al$ and $\omega_{n+1}[\al]$ as $\omega^{\omega_n[\al]}$. Write $\omega_n$ for the ordinal $\omega_n[1]$.
\end{notation}

\subsection{Well-partial-orders with the gap-condition}\label{Wpo with gap-condition}

In 1982, Harvey Friedman introduced a well-partial-order of finite rooted trees with labels in $\{0,\dots,n-1\}$ with a gap-embeddability relation on it. This was later published by Simpson in \cite{simpsonfinitetrees}. This $\wpo$ was very important, because it was one of the first natural examples of statements not provable in the strongest theory of the Big Five in Reverse Mathematics, $\pica$.

\begin{definition}
Let $\mathbb{T}_n$ be the set of finite rooted trees with labels in $\{0,\dots,n-1\}$. An element of $\mathbb{T}_n$ is of the form $(T,l)$, where $T$ is a finite rooted tree, which we see as a partial order on the set of nodes, and $l$ is a labeling function, a mapping from $T$ to the set $\{0,\dots,n-1\}$. Define $(T_1,l_1)\leq_{gap} (T_2,l_2)$ if there exists an injective order- and infimum-preserving mapping $f$ from $T_1$ to $T_2$ such that
\begin{enumerate}
\item $\forall \tau \in T_1$, we have $l_1(\tau) = l_2(f(\tau))$.
\item $\forall \tau  \in T_1$ and for all immediate successors $\tau' \in T_1$ of $\tau$, we have
that if $\overline{\tau} \in T_2$ is strictly between $f(\tau)$ and $f(\tau')$, then $l_2(\overline{\tau}) \geq l_2(f(\tau')) = l_1(\tau')$.
\end{enumerate}
\end{definition}

\begin{theorem}[Simpson/Friedman\cite{simpsonfinitetrees}]
For all $n$, $(\mathbb{T}_n,\leq_{gap})$ is a $\wpo$ and
$\pica \not\vdash \, \forall n< \omega$ `$(\mathbb{T}_n,\leq_{gap})$ is a $\wpo$'.
\end{theorem}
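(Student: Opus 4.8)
The statement has two halves of very different character. The well-partial-order assertion is a genuine combinatorial theorem (essentially Friedman's extension of Kruskal's theorem with the gap-condition), while the unprovability claim is a reverse-mathematical metatheorem. I would treat them separately.

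\emph{The wpo part.} The plan is to run a minimal-bad-sequence argument in the style of Nash-Williams. Suppose for contradiction that $(\mathbb{T}_n,\leq_{gap})$ admits a bad sequence, i.e.\ an infinite sequence $(T_i,l_i)_{i<\omega}$ with no $i<j$ such that $(T_i,l_i)\leq_{gap}(T_j,l_j)$. Using the well-foundedness of the trees under the subtree relation (which gives a well-founded rank), choose such a bad sequence that is \emph{minimal}: $T_0$ has least possible size among first terms of bad sequences, then $T_1$ least possible given $T_0$, and so on. Now decompose each $T_i$ at its root: write the root label $r_i=l_i(\text{root})$ and let $T_i^{(1)},\dots,T_i^{(k_i)}$ be the immediate subtrees, each carrying the restricted labeling. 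Pass to an infinite subsequence on which the root labels $r_i$ are constant (possible since there are only finitely many labels). The multiset of \emph{all} immediate subtrees occurring across the tail of the sequence forms, by minimality of the bad sequence, a wpo under $\leq_{gap}$ — this is the heart of the argument and where minimality is used: any bad sequence drawn from these subtrees could be spliced in to contradict minimality. Then apply Higman's lemma (Lemma-style closure of wpo's under finite sequences/multisets) to the sequences of immediate subtrees to find $i<j$ with a gap-respecting embedding of the subtree-forest of $T_i$ into that of $T_j$; combined with the equality of root labels this assembles into a $\leq_{gap}$-embedding $T_i\hookrightarrow T_j$, contradicting badness. The one delicate point — and the main obstacle — is verifying that the embedding assembled from the forest embedding genuinely satisfies the gap-condition (clause 2 of the definition): one must check that the labels on nodes of $T_j$ lying strictly between the image of the root and the image of an immediate successor are $\geq$ the required value. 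This is handled by being careful about how the subtrees are grouped (grouping by the label of their root, so that the relevant inequalities are built into the Higman-lemma ordering on the index set of labels), and it is exactly the subtlety that makes the gap-condition version strictly stronger than plain Kruskal.

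\emph{The independence part.} Here I would not reprove Friedman's result from scratch but cite and sketch its structure. The upper-bound direction — that the statement is \emph{not} provable in $\pica$ — follows by showing that ``$\forall n\ (\mathbb{T}_n,\leq_{gap})$ is a $\wpo$'' implies, provably in a weak base theory, the $1$-consistency of $\pica$ (or the well-foundedness of an ordinal notation system, e.g.\ one reaching the Takeuti--Feferman--Buchholz ordinal, that majorizes the proof-theoretic ordinal of $\pica$). The mechanism is the standard one: from a putative descending sequence in the notation system, or from a proof of a false $\Pi^0_2$ statement, one extracts a bad sequence of labeled trees, using that the ordinals below the relevant bound can be coded by such trees in a $\leq_{gap}$-respecting way. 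Hence if $\pica$ proved the wpo statement it would prove its own $1$-consistency, contradicting Gödel's second incompleteness theorem. I would present this as: (i) recall the ordinal analysis giving $|\pica| = \psi_{\Omega_1}(\Omega_\omega)$ (or the appropriate Buchholz $\vartheta$/$\psi$ value); (ii) cite the construction embedding the corresponding notation system into $\bigcup_n \mathbb{T}_n$ via gap-embeddings; (iii) conclude by incompleteness. The first (wpo) half is what I would write out in detail; the independence half I would state with references to \cite{simpsonfinitetrees} and the surrounding proof-theoretic literature, since it is not the focus of the present paper.
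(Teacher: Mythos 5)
The paper does not actually prove this theorem: it is stated as a cited result of Simpson/Friedman \cite{simpsonfinitetrees} in the preliminaries, so there is no in-paper argument to compare yours against. Judged on its own terms, your plan for the independence half is the standard and correct route (the wpo statement yields well-foundedness of a notation system of order type at least the proof-theoretic ordinal of $\pica$ via a $\leq_{gap}$-respecting coding of notations by labeled trees, hence $1$-consistency of $\pica$, hence unprovability by G\"odel's second incompleteness theorem), and citing the literature for it is appropriate here.

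The wpo half, however, has a genuine gap at exactly the point you flag and then wave away. After passing to constant root labels and obtaining a Higman-style embedding of the forest of immediate subtrees of $T_i$ into that of $T_j$, the assembled map need not satisfy clause 2 of the gap-condition: if an immediate subtree $T_i^{(m)}$ gap-embeds into some immediate subtree $T_j^{(m')}$, the image of the root of $T_i^{(m)}$ may sit strictly below the root of $T_j^{(m')}$, and the relation $\leq_{gap}$ (as defined in the paper) imposes \emph{no} constraint on the labels of the nodes of $T_j^{(m')}$ lying above that image. Those nodes are exactly the ones strictly between $f(\mathrm{root}(T_i))$ and $f(\mathrm{root}(T_i^{(m)}))$, and clause 2 demands their labels be $\geq$ the root label of $T_i^{(m)}$. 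Grouping the subtrees by root label does not repair this, since the offending nodes can carry arbitrary smaller labels. The known fix is to run the argument with a strengthened ordering on the subtrees (one that also controls labels above the image of the root, analogous to the strong versus weak distinction the paper makes for $\S_n^s$ versus $\S_n^w$), which forces an induction on the number of labels $n$ with a decomposition in the spirit of $\S_{n+1}^s \cong \S_n^s \times (\S_n^s)^*$ rather than a single minimal-bad-sequence pass. As written, your assembly step would produce a map that can violate the gap-condition, so the contradiction with badness is not obtained.
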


We are interested in the linearized versions of these $\wpo$'s, which have been studied extensively by Sch\"utte and Simpson \cite{schuttesimpson}. Before we give the definition of these linearized $\wpo$'s, we introduce the disjoint sum and cartesian product between $\wpo$'s and the Higman ordering.

\begin{definition} Let $X_0$ and $X_1$ be two $\wpo$'s. Define the \textbf{disjoint sum} $X_0 + X_1$ as the set $\{(x,0): x \in X_0\}\cup \{(y,1): y \in X_1\}$ with the following ordering:
\[(x,i) \leq (y,j) \Leftrightarrow i=j \text{ and } x \leq_{X_i} y.\]
For an arbitrary element $(x,i)$ in $X_0 + X_1$, we omit the second coordinate $i$ if it is clear from the context to which set the element $x$ belongs to.
Define the \textbf{cartesian product} $X_0 \times X_1$ as the set $\{(x,y): x  \in X_0, y \in X_1\}$ with the following ordering:
\[(x,y)\leq (x',y') \Leftrightarrow x\leq_{X_0} x' \text{ and } y\leq_{X_1} y'.\]
\end{definition}

\begin{definition}
Let $X^*$ be the \textbf{set of finite sequences} over the partial order $(X,\leq_X)$. Denote $x_0\dots x_{k-1} \leq_X^* y_0\dots y_{l-1}$ if there exists a strictly increasing function $f:\{0,\dots,k-1\} \to \{0,\dots,l-1\}$ such that for all $0 \leq i \leq k-1$, $x_i \leq_X y_{f(i)}$ holds. If the ordering on $X$ is clear from the context, we write $X^*$ instead of ($X^*,\leq_X^*)$.
\end{definition}

Hence, if we write $X^*$, we mean the set of of finite sequences over $X$ or the partial order $(X^*,\leq_X^*)$. The context will make clear what we mean.
Define $\S_n$ as $\{0,\dots,n-1\}^*$ and $\S$ as $\N^*$. $\S_n$ and $\S$ are either sets of finite sequences or partial orders.

\begin{theorem}[de Jongh-Parikh\cite{dejonghandparikh}, Schmidt\cite{dianaschmidt}]
\label{maximal order type sum product and higman}If $X_0$, $X_1$ and $X$ are $\wpo$'s, then $X_0 + X_1$, $X_0 \times X_1$ and $X^*$ are still $\wpo$'s, and
\begin{align*}
o(X_0 + X_1) &= o(X_0)\oplus o(X_1),\\
o(X_0 \times X_1) &= o(X_0) \otimes o(X_1),
\end{align*}
where $\oplus$ and $\otimes$ is the natural sum and product between ordinals, and
\[o(X^*) = \left\{
\begin{array}{ll}
\omega^{\omega^{o(X)-1}} & \text{if $o(X)$ is finite,}\\
\omega^{\omega^{o(X)+1}}& \text{if $o(X)= \varepsilon + n$, with $\varepsilon$ an epsilon number and $n< \omega$,}\\
\omega^{\omega^{o(X)}} & \text{otherwise.}
\end{array}
\right.\]
\end{theorem}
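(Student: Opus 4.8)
I would first dispatch the closure properties. For the disjoint sum, given an infinite sequence in $X_0+X_1$ the infinite pigeonhole principle puts infinitely many terms into a single summand, and a $\wpo$ admits no bad infinite sequence there, so a good pair exists. For the cartesian product I would use the standard observation that every infinite sequence $(x_i)_{i<\om}$ in a $\wpo$ has an infinite weakly increasing subsequence: the set of $i$ with $x_i\not\le x_j$ for all $j>i$ must be finite, since otherwise its enumeration would be a bad sequence, and beyond its maximum one builds the subsequence greedily. Applying this to the first coordinates of a sequence in $X_0\times X_1$ and then the $\wpo$ property of $X_1$ to the corresponding second coordinates yields a good pair. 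For $X^*$ I would give the Nash--Williams minimal bad sequence argument: assuming a bad sequence exists, choose one $(s_i)_{i<\om}$ whose term at each coordinate has minimal length, write $s_i=x_i t_i$ with $x_i\in X$ the first letter (each $s_i$ is nonempty), pass to a subsequence $(s_{i_k})_k$ along which the $x_{i_k}$ are weakly increasing, and note that $s_0,\dots,s_{i_0-1},t_{i_0},t_{i_1},\dots$ is strictly shorter at coordinate $i_0$, hence good; every possible good pair in it contradicts either the minimality of $(s_i)$, the badness of $(s_i)$ (using $t_{i_k}\le^* s_{i_k}$), or the monotonicity of the $x_{i_k}$ (using $x_{i_k}t_{i_k}\le^* x_{i_l}t_{i_l}$).

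Next, the lower bounds $o(X_0+X_1)\ge o(X_0)\oplus o(X_1)$, $o(X_0\times X_1)\ge o(X_0)\otimes o(X_1)$ and $o(X^*)\ge(\text{right-hand side})$. Let $L$, $L_0$, $L_1$ be maximal linear extensions of $X$, $X_0$, $X_1$ (Theorem~\ref{dejonghandparikh1}). Since $\le_X\,\subseteq\,\le_L$, the identity map is a quasi-embedding $L_0+L_1\to X_0+X_1$, $L_0\times L_1\to X_0\times X_1$ and $L^*\to X^*$ (an increasing witness for a $\le_X$-comparison also witnesses the corresponding $\le_L$-comparison), so by Lemma~\ref{quasi-embedding} it suffices to bound $o$ from below in the case of well-orders $\ga_0=o(X_0)$, $\ga_1=o(X_1)$, $\ga=o(X)$. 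For well-orders $o(\ga_0+\ga_1)=\ga_0\oplus\ga_1$ and $o(\ga_0\times\ga_1)=\ga_0\otimes\ga_1$ are the classical Hessenberg characterizations of the natural sum and product (the natural sum is the largest order type of a shuffling of two disjoint well-orderings; the natural product the maximal order type of the product partial order), for which I would recall the standard arguments. The remaining lower bound $o(\ga^*)\ge(\text{right-hand side})$ amounts to exhibiting an explicit well-ordering of $\ga^*$ of the stated type; for $\ga$ finite this is Higman's original computation, and for general $\ga$ it is assembled from it by the normal-form bookkeeping used for the upper bound below.

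The upper bounds for the sum and product come next. A maximal linear extension $M$ of $X_0+X_1$ restricts to linear extensions of $X_0$ and $X_1$ of order types $\de_0\le o(X_0)$, $\de_1\le o(X_1)$, and $M$ is then a shuffling of these two well-orderings, so $otype(M)\le\de_0\oplus\de_1\le o(X_0)\oplus o(X_1)$, again by the Hessenberg characterization. The product upper bound $o(X_0\times X_1)\le o(X_0)\otimes o(X_1)$ does not reduce so cheaply; I would run the transfinite induction of de Jongh and Parikh, fixing $X_1$ and inducting on $o(X_0)$: remove a suitable element of $X_0$ so as to split $X_0\times X_1$ into subsets whose first factors have strictly smaller maximal order type (plus a controlled remainder), and recombine the inductive bounds using monotonicity and distributivity of $\oplus$, $\otimes$.

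The genuinely hard part — and the main obstacle — is Schmidt's formula for $o(X^*)$, in particular the upper bound and the three-way case distinction. Unlike for the sum, a maximal linear extension of $X^*$ does not visibly decompose, so the bound has to be proved by transfinite recursion on $o(X)$ directly, reducing $\le^*_X$-comparisons to nested Higman-comparisons over the normal-form components of elements of $X$ and decomposing a word over a disjoint sum into its maximal same-side blocks. The recursion is fed by identities expressing $o((X_0+X_1)^*)$, $o((X+1)^*)$ and $o((X\cdot n)^*)$ in terms of sequence spaces over strictly smaller $\wpo$'s together with the sum and product clauses of the theorem proved above. The two anomalies are exactly what forces $o(X^*)$ to depend only on $o(X)$: the exponent $\omega^{\omega^{\ga}}$ is discontinuous in $\ga$ at the relevant points — for finite $\ga$ the innermost exponent collapses, whence $\omega^{\omega^{o(X)-1}}$ rather than $\omega^{\omega^{o(X)}}$, and at an epsilon number the absorption $\omega^{\varepsilon}=\varepsilon$ forces an extra factor $\omega$ to reappear one level up, whence $\omega^{\omega^{\varepsilon+n+1}}$. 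Checking that the recursion closes consistently in each of the three regimes, with no order type lost or gained, is the technical heart of the theorem and the step I expect to be the most delicate.
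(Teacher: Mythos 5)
The paper does not actually prove this theorem; it is imported from de Jongh--Parikh and Schmidt as a black box, so there is no in-paper argument to compare yours against. Judged on its own, your outline follows the classical route and the parts you execute are sound: the pigeonhole argument for $X_0+X_1$, the increasing-subsequence extraction for $X_0\times X_1$, the Nash--Williams minimal bad sequence argument for $X^*$, the reduction of all three lower bounds to the well-ordered case via maximal linear extensions (Theorem \ref{dejonghandparikh1}) and the quasi-embedding lemma (Lemma \ref{quasi-embedding}), and the shuffle argument for the upper bound on $o(X_0+X_1)$ are all correct. One small wording issue in the minimal-bad-sequence step: minimality is what forces the modified sequence $s_0,\dots,s_{i_0-1},t_{i_0},t_{i_1},\dots$ to be good, and each of the three kinds of good pair then contradicts the \emph{badness} of $(s_i)$ (the last kind via the monotonicity of the $x_{i_k}$ and $s_{i_k}=x_{i_k}t_{i_k}$); your phrasing suggests minimality is re-used to exclude a good pair, which it is not.

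The genuine gap is that the two hard computations are named but not performed. For $o(X_0\times X_1)\le o(X_0)\otimes o(X_1)$ you correctly invoke the de Jongh--Parikh transfinite induction but give no decomposition of $X_0\times X_1$ after removing an element, which is exactly where the argument lives. For $o(X^*)$ both directions are deferred: the lower bound for $o(\gamma^*)$ with $\gamma$ an arbitrary well-order is ``assembled by normal-form bookkeeping,'' and the upper bound is a description of a recursion whose feeding identities --- the values of $o\bigl((X_0+X_1)^*\bigr)$, $o\bigl((X+1)^*\bigr)$ and $o\bigl((X\cdot n)^*\bigr)$ in terms of smaller data --- are themselves nontrivial lemmas you do not state or prove. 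In particular, the two exceptional clauses (finite $o(X)$, and $o(X)=\varepsilon+n$ with $\varepsilon$ an epsilon number) are where Schmidt's proof earns the formula; your discussion of them is a plausibility argument about where $\gamma\mapsto\omega^{\omega^{\gamma}}$ fails to be continuous, not a derivation that the recursion closes with exactly those values. You flag this yourself, and the honest verdict is that the proposal is an accurate roadmap of the classical proofs rather than a proof: to complete it you would either carry out Schmidt's recursion in full or, as the paper does, cite \cite{dejonghandparikh} and \cite{dianaschmidt} for the result.
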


%


Now, we define the linearized versions of the gap-embeddability relation. 

\begin{definition}\label{definitieweakstrongnormalgapsequences}
In this context, let $\S_n$ be the set of the finite sequences over $\{0,\dots,n-1\}$. We say that $s =s_0\dots s_{k-1} \leq_{gap}^w s'_0\dots s'_{l-1} = s'$ if there exists a strictly increasing function $f:\{0,\dots,k-1\} \to \{0,\dots,l-1\}$ such that
\begin{enumerate}
\item for all $0 \leq i \leq k-1$, we have $s_i = s'_{f(i)}$,
\item for all $0 \leq i < k-1$ and all $j$ between $f(i)$ and $f(i+1)$, the inequality $s'_j \geq s'_{f(i+1)} = s_{i+1}$ holds.
\end{enumerate}
This ordering on $\S_n$ is called the \textbf{weak gap-embeddability relation}. The partial order $(\S_n,\leq_{gap}^w)$ is also denoted by $\S_n^{w}$.
The \textbf{strong gap-em\-bed\-da\-bility relation} fulfills the extra condition
\begin{enumerate}
\item[3.] for all $j<f(0)$, we have $s'_j \geq s'_{f(0)} = s_0$.
\end{enumerate}
This ordering on $\S_n$ is denoted by $\leq_{gap}^s$
We also write $\S_n^s$ for the partial order $(\S_n,\leq_{gap}^s)$.
\end{definition}

We now give an overview of the results in the article of Sch\"utte and Simpson \cite{schuttesimpson}.

\begin{theorem}[Sch\"utte-Simpson\cite{schuttesimpson}, Simpson/Friedman\cite{simpsonfinitetrees}]
For all $n$, $(\S_n,\leq^w_{gap})$ and $(\S_n,\leq_{gap}^s)$ are $\wpo$'s.
\end{theorem}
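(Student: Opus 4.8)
The plan is to deduce both assertions from the already-cited Simpson/Friedman theorem that $(\mathbb{T}_n,\leq_{gap})$ is a $\wpo$ for every $n$, by producing quasi-embeddings and invoking Lemma~\ref{quasi-embedding}. The guiding observation is that a finite sequence over $\{0,\dots,n-1\}$ is the same thing as a finite rooted tree in which every node has at most one immediate successor --- a \emph{path} --- and under this identification $\leq_{gap}^w$ is precisely the restriction of $\leq_{gap}$ to paths. (Throughout, the empty sequence can be set aside: it is a least element in both $\leq_{gap}^w$ and $\leq_{gap}^s$, and adjoining a least element preserves the $\wpo$ property.)

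For the weak relation I would send $s = s_0\dots s_{k-1}$ to the path $T_s\in\mathbb{T}_n$ on $k$ nodes whose $i$-th node, counting the root as the $0$-th one, is labelled $s_i$, and then verify that $s\mapsto T_s$ is an order-embedding of $(\S_n,\leq_{gap}^w)$ into $(\mathbb{T}_n,\leq_{gap})$. In one direction, a strictly increasing $f\colon\{0,\dots,k-1\}\to\{0,\dots,l-1\}$ witnessing $s\leq_{gap}^w s'$ induces the injective, order-preserving node map $\widetilde f$ sending the $i$-th node of $T_s$ to the $f(i)$-th node of $T_{s'}$; this $\widetilde f$ is automatically infimum-preserving since both trees are chains, it is label-preserving by condition 1 in the definition of $\leq_{gap}^w$, and condition 2 there is exactly condition 2 in the definition of $\leq_{gap}$ once one notes that in a path the nodes lying strictly between the $f(i)$-th and the $f(i+1)$-th node are exactly those at positions $f(i)+1,\dots,f(i+1)-1$. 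In the other direction, any injective order- and infimum-preserving $g\colon T_s\to T_{s'}$ maps the chain $T_s$ monotonically and injectively into the chain $T_{s'}$, hence is the node map of a strictly increasing position function, and conditions 1 and 2 of $\leq_{gap}$ translate back into conditions 1 and 2 of $\leq_{gap}^w$. Thus $\S_n^w$ is a $\wpo$ by Lemma~\ref{quasi-embedding}.

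For the strong relation the extra condition 3 --- every $s'_j$ with $j<f(0)$ satisfies $s'_j\geq s_0$ --- has no counterpart in $\leq_{gap}$, because that relation places no constraint on the ancestors of the image of the root. I would fix this with one additional label: relabel $\{0,\dots,n-1\}$ as $\{1,\dots,n\}$ and send $s$ to the path $U_s\in\mathbb{T}_{n+1}$ obtained by placing a new root with the least label $0$ above the relabelled path of $s$. Since $0$ is strictly smaller than every other label and occurs in $U_s$ and in $U_{s'}$ only at the respective roots, any witness for $U_s\leq_{gap}U_{s'}$ is forced to send root to root; therefore the $s$-part of $U_s$ is mapped into the $s'$-part of $U_{s'}$, the weak-case analysis applies verbatim to these parts, and condition 2 of $\leq_{gap}$ applied to the unique edge out of the root of $U_s$ delivers condition 3. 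Reading this equivalence in both directions shows $s\mapsto U_s$ is an order-embedding of $(\S_n,\leq_{gap}^s)$ into $(\mathbb{T}_{n+1},\leq_{gap})$, so $\S_n^s$ is a $\wpo$, again by Lemma~\ref{quasi-embedding} and the Simpson/Friedman theorem.

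I expect the only genuine work to be the two bookkeeping verifications --- matching ``strictly between'' in a path with condition 2, and, in the strong case, checking that the fresh root must be sent to the fresh root, which is exactly the point at which one is forced to pass to $\mathbb{T}_{n+1}$ rather than $\mathbb{T}_n$. A self-contained alternative, and the route actually taken by Sch\"utte and Simpson, bypasses the tree theorem and argues by induction on $n$, reducing $\S_n^w$ and $\S_n^s$ to nested applications of Higman's lemma (the $X^*$ part of Theorem~\ref{maximal order type sum product and higman}); that proof is more elementary but combinatorially heavier, and it is the version on which the subsequent order-type computations in this paper will be built.
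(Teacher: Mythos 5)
The paper does not actually prove this statement: it is quoted from Sch\"utte--Simpson and Simpson/Friedman, and the only machinery the paper later develops in this direction is the Sch\"utte--Simpson style reduction of $\S_{n+1}^s$ to $\S_n^s\times(\S_n^s)^*$ (Theorem~\ref{congruenceS_n+1strongandS_nstrong} and Lemma~\ref{equalordertypeS_n+1andS_n}), i.e.\ the nested-Higman route you mention at the end. Your argument is correct and takes a genuinely different path: identifying sequences with labelled chains makes $\leq_{gap}^w$ exactly the restriction of $\leq_{gap}$ to paths in $\mathbb{T}_n$ (infimum-preservation is automatic for chains, and ``strictly between $f(i)$ and $f(i+1)$'' matches condition~2 on the nose), and your device of a fresh minimal root label to capture condition~3 of $\leq_{gap}^s$ is sound --- label preservation forces root to go to root, and condition~2 applied to the unique edge out of the new root is precisely condition~3, at the harmless cost of landing in $\mathbb{T}_{n+1}$. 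The handling of the empty sequence as an adjoined least element is also fine. What the reduction to trees buys is brevity, but at the price of invoking a theorem ($\mathbb{T}_n$ is a $\wpo$) that is far stronger, proof-theoretically, than what is being proved; the Sch\"utte--Simpson induction on $n$ via Higman's lemma is heavier combinatorially but elementary, formalizable at the level of $\aca$ for each fixed $n$, and --- decisively for this paper --- it is the decomposition that also yields the maximal order types $o(\S_n^w)$, $o(\ov{\S}_n^w)=\omega_{2n-1}$, etc., which a bare quasi-embedding into $\mathbb{T}_n$ cannot give.
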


\begin{theorem}[Sch\"utte-Simpson\cite{schuttesimpson}]\ \\
$\aca \not\vdash \forall n < \omega \,$ `$(\S_n,\leq_{gap}^w)$ is a $\wpo$',\\
$\aca \not\vdash \forall n < \omega \,$ `$(\S_n,\leq_{gap}^s)$ is a $\wpo$'.
\end{theorem}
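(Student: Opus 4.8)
The plan is to reduce the statement to the known fact that $\aca$ cannot prove the well-foundedness of a canonical notation system for $\varepsilon_0$. First note that for each fixed $n$ the sentence ``$(\S_n,\leq_{gap}^w)$ is a $\wpo$'' is provable in $\aca$ (its maximal order type will turn out to lie below $\varepsilon_0$, and $\aca$ proves transfinite induction up to every such ordinal), so it is the quantifier over $n$ that carries the proof-theoretic strength. Concretely I would show that, provably in $\rca$, the statement $\forall n\,(\S_n,\leq_{gap}^w)$ is a $\wpo$ implies $\mathrm{WO}(\varepsilon_0)$; since $\rca+\mathrm{WO}(\varepsilon_0)\vdash\mathrm{Con}(\aca)$ by Gentzen's ordinal analysis (cf.\ \cite{SOSOA}), G\"odel's second incompleteness theorem then yields $\aca\not\vdash\mathrm{WO}(\varepsilon_0)$, and hence $\aca\not\vdash\forall n\,(\S_n,\leq_{gap}^w)$ is a $\wpo$. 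The case of $\leq_{gap}^s$ follows without extra work: since $\leq_{gap}^s\,\subseteq\,\leq_{gap}^w$, any good pair for the finer relation is also one for the coarser, so $\rca$ proves $\big(\forall n\,(\S_n,\leq_{gap}^s)\text{ is a }\wpo\big)\to\big(\forall n\,(\S_n,\leq_{gap}^w)\text{ is a }\wpo\big)$, and unprovability of the consequent forces unprovability of the antecedent.

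The core is a uniform lower bound for the weak ordering: for each $n$, verifiably in $\rca$, a quasi-embedding $e_n$ of a standard ordinal notation system $T_n$ of order type $\alpha_n$ into $(\S_n,\leq_{gap}^w)$, with the $\alpha_n$ strictly increasing, each below $\varepsilon_0$, and with supremum $\varepsilon_0$. Following Sch\"utte and Simpson \cite{schuttesimpson} the natural route runs through nested Higman orderings: peeling off one label at a time decomposes a sequence in $\S_n$ into blocks that are themselves sequences over a smaller alphabet, and iterating exhibits $\S_n^w$ as quasi-equivalent to a suitably nested Higman ordering of depth $n$; by Theorem \ref{maximal order type sum product and higman} these have maximal order types whose supremum over $n$ is $\varepsilon_0$, and one reads off explicit quasi-embeddings $e_n$ from the corresponding notation systems. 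The clause-2 constraint in the definition of $\leq_{gap}^w$ --- that a label sitting in a gap be $\geq$ the label immediately to its right --- is exactly what makes this decomposition order-reflecting, i.e.\ a quasi-embedding and not a mere map. The argument of Lemma \ref{quasi-embedding}, carried out inside $\rca$, then shows that ``$(\S_n,\leq_{gap}^w)$ is a $\wpo$'' implies ``$T_n$ is well-founded''.

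It remains to glue these together. Choosing the $T_n$ to be the collections of notations for ordinals below $\alpha_n$, they form a chain of downward-closed initial segments of the canonical $\varepsilon_0$-system; any infinite descending sequence there starts at a notation below some $\alpha_n$ and hence lies entirely inside $T_n$, so $\forall n\,(T_n\text{ well-founded})$ implies $\mathrm{WO}(\varepsilon_0)$ over $\rca$. Combined with the previous paragraph this closes the reduction.

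The main obstacle is the middle step. One has to check that the label-tracking block decomposition is genuinely order-reflecting under the gap-condition --- the ``$\geq$'' of clause 2, together with the absence of any constraint in front of the first block (precisely the difference between $\leq_{gap}^w$ and $\leq_{gap}^s$, and the reason their order-type analyses diverge), needs careful bookkeeping --- and, just as importantly, that the construction, the cofinality of $(\alpha_n)$ in $\varepsilon_0$, and the final assembly into $\mathrm{WO}(\varepsilon_0)$ all go through uniformly and inside $\rca$ (or at least $\aca$), not merely in the metatheory. The purely proof-theoretic ingredient, $\aca\not\vdash\mathrm{WO}(\varepsilon_0)$, is classical and may be cited rather than reproved.
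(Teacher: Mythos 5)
The paper does not actually prove this theorem --- it is imported verbatim from Sch\"utte--Simpson \cite{schuttesimpson} --- so there is no in-paper proof to compare against; your outline is the standard argument from that source and is consistent with the machinery the paper does develop (the nested Higman decomposition of Theorem \ref{congruenceS_n+1strongandS_nstrong} and its corollaries, which make the maximal order types of the $\S_n$ cofinal in $\varepsilon_0$, together with quasi-embeddings of notation systems as in Lemma \ref{quasi-embedding}). All the directions in your reduction are right (strong $\subseteq$ weak, so $\wpo$ for $\leq^s_{gap}$ implies $\wpo$ for $\leq^w_{gap}$; a quasi-embedding of a linear notation system into $\S_n^w$ transfers well-partial-orderedness to well-foundedness), and the one step you flag as unproved --- that the block decomposition is order-reflecting and formalizable in $\rca$ --- is precisely the content of the cited reference, so citing it there is appropriate.
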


\begin{theorem}[Sch\"utte-Simpson\cite{schuttesimpson}]\ \\
For all $n$, $\aca \vdash$ `$(\S_n,\leq_{gap}^w)$ is a $\wpo$',\\
For all $n$, $\aca \vdash$ `$(\S_n,\leq_{gap}^s)$ is a $\wpo$'.
\end{theorem}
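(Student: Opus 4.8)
The plan is to argue by an external induction on $n$: the assertion is a scheme, and we produce, for each fixed standard $n$, a proof in $\aca$. The base case $n\le 1$ is immediate: $\S_0^w=\S_0^s$ is a one-point order, and on $\S_1$ the gap clauses of Definition~\ref{definitieweakstrongnormalgapsequences} are vacuous, so $\S_1^w$ and $\S_1^s$ are both isomorphic to $(\omega,\leq)$, which $\rca$ already proves to be a $\wpo$. For the inductive step I would use the Sch\"utte--Simpson decomposition of a gap sequence over $n+1$ labels along the occurrences of the least label.

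Concretely, write a sequence over $\{0,\dots,n\}$ in block form $s=B_0\,0\,B_1\,0\cdots 0\,B_m$, where each $B_t$ is a (possibly empty) word over $\{1,\dots,n\}$; shifting labels down by one identifies words over $\{1,\dots,n\}$ with their gap orderings with $\S_n^w$ and $\S_n^s$. The key combinatorial observation is that if $s\leq_{gap}^w s'$, then the $0$'s of $s$ are sent to $0$'s of $s'$; an internal block $B_t$ ($t\ge 1$) is mapped \emph{entirely inside a single block $B'_j$ of $s'$}, and since everything between the image of the preceding $0$ and the image of $(B_t)_0$ must be $\ge (B_t)_0$, in fact $B_t\leq_{gap}^s B'_j$; while the leading block $B_0$ lands inside one block $B'_{j_0}$ of $s'$ lying to the left of all the $B'_j$ used by $B_1,\dots,B_m$, with $B_0\leq_{gap}^w B'_{j_0}$. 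Conversely, any such block-wise data reassembles into a weak gap embedding of $s$ into $s'$ (clause~2 of Definition~\ref{definitieweakstrongnormalgapsequences} on each block comes from the weak/strong block embeddings, and the clauses straddling a $0$ are vacuous). Tagging the leading block differently from the internal ones — so that in a Higman embedding the unique leading letter is forced onto the unique leading letter — turns this into a genuine quasi-embedding
\[\S_{n+1}^w \;\hookrightarrow\; (\S_n^w + \S_n^s)^*.\]
The strong case $\S_{n+1}^s$ is handled the same way; the extra clause~3 pins the leading block into $B'_0$ and forces $B_0\leq_{gap}^s B'_0$ (unless $B_0$ is empty, where it is vacuous), again yielding a quasi-embedding into a fixed finite expression in $\S_n^w$ and $\S_n^s$ built with $+$, $\times$ and $(\cdot)^*$.

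Granting the reduction, the theorem follows quickly. By the induction hypothesis $\aca$ proves that $\S_n^w$ and $\S_n^s$ are $\wpo$'s; by the $\wpo$-preservation content of Theorem~\ref{maximal order type sum product and higman} — disjoint sum, cartesian product and the finite-sequence operation $X\mapsto X^*$ all send $\wpo$'s to $\wpo$'s, the last being Higman's lemma — and since only a fixed finite number of these operations occurs, $\aca$ proves the target expression is a $\wpo$; then by Lemma~\ref{quasi-embedding} a quasi-embedding into a $\wpo$ is itself a $\wpo$, so $\aca$ proves $\S_{n+1}^w$ and $\S_{n+1}^s$ are $\wpo$'s. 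The meta-level recursion on $n$ yields the scheme.

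I expect the main obstacle to be the reduction itself: showing that the block decomposition along the $0$'s is order-reflecting in \emph{both} directions, which is exactly where clauses~2 and~3 of Definition~\ref{definitieweakstrongnormalgapsequences} must be verified with care, and where the weak/strong bookkeeping (internal blocks compared by $\leq_{gap}^s$, the leading block by $\leq_{gap}^w$ or, in the strong case, by $\leq_{gap}^s$ anchored at $B'_0$) cannot be fudged. A secondary point is to make sure Higman's lemma, and the bounded iteration of it used here, is carried out within $\aca$ rather than a stronger system — for which one uses a proof of Higman's lemma that avoids the minimal bad sequence construction and hence $\Pi^1_1$-comprehension.
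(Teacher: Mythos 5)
Your argument is correct and takes essentially the same route as the paper: the theorem is imported from Sch\"utte--Simpson, but the block decomposition along the occurrences of the label $0$ that you use is precisely the one underlying Theorem~\ref{congruenceS_n+1strongandS_nstrong} and Lemma~\ref{equalordertypeS_n+1andS_n} ($\S_{n+1}^s \cong \S_n^s \times (\S_n^s)^*$ and $o(\S_{n+1}^w)=o((\S_n^s)^*)$), combined with Higman's lemma in $\aca$ and an external induction on $n$. One small remark: your tagged map into $(\S_n^w+\S_n^s)^*$ is order-reflecting but not order-preserving (e.g.\ $1\leq_{gap}^w 01$ while the images are incomparable because of the tags), which is exactly the one direction Lemma~\ref{quasi-embedding} requires, so this is harmless.
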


\begin{definition}
Denote the subset of $\S_n$ of elements $s_0 \dots s_{k}$ that fulfill the extra condition $s_0\leq i$ by $\S_n[i]$. Accordingly as in Definition \ref{definitieweakstrongnormalgapsequences}, $(\S_n[i], \leq^w_{gap})$, respectively $(\S_n[i],\leq^s_{gap})$, is denoted by $\S_n^w[i]$, respectively $\S_n^s[i]$.
\end{definition}

\begin{definition}
Take two partial orders $X_0$ and $X_1$. We say that $X_0$ and $X_1$ are order-isomorphic if there exists a bijective function $f$ such that $x\leq_{X_0} y \Leftrightarrow f(x)\leq_{X_1} f(y)$ for all $x$ and $y$ in $X_0$. We denote this by $X_0 \cong X_1$.
\end{definition}

If $X_0 \cong X_1$ and $X_0$ or $X_1$ is a $\wpo$, then the other one is also a $\wpo$ with the same maximal order type.

\medskip

The proofs by Sch\"utte and Simpson \cite{schuttesimpson} also yield results on the maximal order types of the sequences with the gap-embeddability relation. More specifically, they prove the next lemma (which is in Lemma 5.5 in \cite{schuttesimpson}). However, there is a small error in their proof, although we believe that this can actually be seen as a typo. For clarity reasons, the proof is given here.

\begin{theorem}[Sch\"utte-Simpson\cite{schuttesimpson}]\label{congruenceS_n+1strongandS_nstrong}\ \\
$\S_{n+1}^s \cong  \S_n^s \times (\S_n^s)^*$.
\end{theorem}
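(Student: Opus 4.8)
The plan is to construct an explicit order-isomorphism $\Phi\colon \S_{n+1}^s \to \S_n^s \times (\S_n^s)^*$ by reading a sequence over $\{0,\dots,n\}$ through its occurrences of the maximal label $n$. Given $s = s_0\dots s_{k-1} \in \S_{n+1}$, let $i_0 < i_1 < \dots < i_{m-1}$ enumerate the positions $p$ with $s_p = n$ (possibly $m=0$). The blocks strictly between consecutive $n$'s — and the initial block before $i_0$ — contain only labels in $\{0,\dots,n-1\}$, so they are honest elements of $\S_n$. I would set $\Phi(s) = \big(b_{-1},\, (b_0)(b_1)\cdots(b_{m-1})\big)$, where $b_{-1} = s_0\dots s_{i_0-1}$ is the initial segment before the first $n$ (the whole word if $m=0$), and $b_j = s_{i_j+1}\dots s_{i_{j+1}-1}$ is the block after the $j$-th occurrence of $n$ (with $i_m := k$). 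The first coordinate lands in $\S_n^s$, the sequence of remaining blocks lands in $(\S_n^s)^*$, and $\Phi$ is clearly a bijection since from the data $(b_{-1}, b_0\cdots b_{m-1})$ one recovers $s = b_{-1}\, n\, b_0\, n\, b_1\, n \cdots n\, b_{m-1}$ uniquely.

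The core of the proof is checking that $\Phi$ and $\Phi^{-1}$ both preserve the ordering, i.e. $s \leq_{gap}^s s'$ in $\S_{n+1}^s$ if and only if $\Phi(s) \leq \Phi(s')$ in the product order. For the forward direction: a strong gap-embedding $f$ from $s$ into $s'$ must send each occurrence of $n$ in $s$ to an occurrence of $n$ in $s'$ (condition 1), and it must do so respecting the block structure — this is exactly where the strong condition does its work. Condition 3 forces everything before $f$ of the first letter of $s$ to have label $\geq s_0$; combined with the fact that the initial block $b_{-1}$ of $s$ sits before the first $n$, one argues the image of $b_{-1}$ lands entirely before the first $n$ used in $s'$, hence inside $b'_{-1}$, and the restriction of $f$ witnesses $b_{-1} \leq_{gap}^s b'_{-1}$ (condition 2 of the strong relation on the sub-block is inherited, since the gap letters are a fortiori $\geq$ the relevant threshold once we know they are $\geq n$ is impossible, so $\geq$ something in $\{0,\dots,n-1\}$; here one uses that the only letter $\geq n$ is $n$ itself). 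Similarly, the $n$'s partition the rest of $s$ into blocks mapped into blocks of $s'$ between the chosen $n$'s, and the Higman-style increasing selection of which $b'_j$ each $b_j$ goes to, together with the strong condition reappearing block-internally, gives $(b_0)\cdots(b_{m-1}) \leq_{gap}^{s,*} (b'_0)\cdots(b'_{m'-1})$. For the converse, given embeddings of the coordinates one splices them together around the $n$'s and checks the three conditions of $\leq_{gap}^s$ hold for the combined map; the gap letters at the splice points are precisely the $n$'s, which dominate every label, so conditions 2 and 3 are automatic at those points and reduce to the per-block hypotheses elsewhere.

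The step I expect to be the main obstacle — and the place where I suspect the "small error/typo" in Sch\"utte–Simpson lives — is the bookkeeping at the initial block $b_{-1}$ versus the later blocks $b_0,\dots,b_{m-1}$: the first coordinate of the product is treated asymmetrically (it is the block \emph{before} any $n$, not after one), and one has to be careful that the strong condition 3 for $s$ translates to condition 3 for $b_{-1} \leq_{gap}^s b'_{-1}$ rather than leaking into the sequence part, and conversely that when $m=0$ (no $n$ at all) the decomposition degenerates correctly to $(s, \langle\rangle)$. One must also handle the edge case where $s$ has an $n$ but $s'$'s matching structure has more $n$'s, ensuring the strictly-increasing Higman selector on blocks is compatible with the position constraints imposed by $f$. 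Once the block-to-block correspondence is pinned down, each verification is a routine unwinding of Definition \ref{definitieweakstrongnormalgapsequences}, and the maximal-order-type identity $o(\S_{n+1}^s) = o(\S_n^s) \otimes o((\S_n^s)^*)$ follows from Theorem \ref{maximal order type sum product and higman} together with the remark that order-isomorphic partial orders have equal maximal order type.
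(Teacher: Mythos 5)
Your decomposition is at the wrong end of the alphabet, and the map $\Phi$ you build is not an order isomorphism in either direction. The gap condition bounds the \emph{skipped} letters from below by the label of the \emph{target} letter. Consequently a separator carrying the maximal label $n$ can always be skipped (since $n\geq a$ for every target label $a$), so a block of $s$ need not land inside a single block of $s'$; and when an $n$ of $s$ is itself the target, every skipped letter must equal $n$, so the Higman-style free choice of which block to use is destroyed. Concretely, for $n=1$ take $s=00$ and $s'=010$: mapping the two $0$'s to positions $0$ and $2$ witnesses $00\leq^s_{gap}010$ (the skipped letter $1$ is $\geq 0$), yet $\Phi(00)=(00,())$ and $\Phi(010)=(0,(0))$, and $00\not\leq^s_{gap}0$ in the first coordinate — so $\Phi$ is not order-preserving, and your claim that the image of $b_{-1}$ stays inside $b'_{-1}$ is false. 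For the converse direction, take $n=2$, $s=02$, $s'=012$: then $\Phi(02)=(0,(\varepsilon))\leq(01,(\varepsilon))=\Phi(012)$, but $02\not\leq^s_{gap}012$ because the letter $1$ lying between the images of $0$ and $2$ would have to be $\geq 2$. So splicing the coordinate embeddings does not produce a gap embedding either; your remark that ``conditions 2 and 3 are automatic at the splice points'' has the inequality backwards.

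The paper's proof cuts the word at the occurrences of the \emph{minimal} label $0$ instead, writing $s=s_0^+0s_1^+0\cdots 0s_k^+$ with each intermediate block a word over $\{1,\dots,n\}$ (shifted down by one to land in $\S_n$), and sends $s$ to $(s_0,(s_1,\dots,s_k))$. With $0$ as separator both properties you need actually hold: a skipped letter must be $\geq$ the target, so a $0$ of $s'$ can never be skipped while aiming at a letter $\geq 1$, which forces each block of $s$ into a single block of $s'$; and when a $0$ of $s$ is the target the gap condition is vacuous, which is exactly what lets the blocks be matched by an arbitrary strictly increasing (Higman) selection. Your final paragraph, deducing $o(\S_{n+1}^s)=o(\S_n^s)\otimes o((\S_n^s)^*)$ from the isomorphism and Theorem \ref{maximal order type sum product and higman}, is fine once the isomorphism is established, but the isomorphism itself must be built on the $0$-decomposition, not the $n$-decomposition.
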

\begin{proof}
Assume $n\geq 0$. 
We define an order-preserving bijection $h_{n}$ from $\S_{n+1}^s$ to the partial order $\S_{n}^s \times (\S_{n}^s)^*$. Let $h_{n}(\varepsilon)$ be $(\varepsilon, ())$. Take an arbitrary element $s \in \S_{n+1}^s \backslash \{\varepsilon\}$. Then $s = s_0^+ 0 \dots 0 s_k^+$, with $s_i \in \S_{n}^s$ for all $i$ and $s_i^+$ is the result of replacing every number $j$ in $s_i$ by $j+1$. Define then $h_{n}(s)$ as $(s_0, (s_1,\dots,s_k))$. Note that for the sequence $s=0$, $k \geq 1$. In other words, $k$ represents the number of $0$'s occurring in $s$. It is easy to see that $h_n$ is a bijection.

\medskip

We know prove that $s < s'$ yields $h_n(s) < h_n(s')$ by induction on $lh(s) + lh(s')$. If $s$ or $s'$ is $\varepsilon$, then this is trivial. So assume $s = s_0^+ 0 \dots 0 s_k^+$ and $s' = {s'}_0^+ 0 \dots 0 {s'}_l^+$. If $k=0$, then $s<s'$ yields $l=0$ and $s_0^+ <{s'}_0^+$, or $l>0$ and $s_0^+ \leq {s'}_0^+$. In both cases, $h_n(s) < h_n(s')$. Assume $k>0$. Then $s< s'$ yields $l>0$, $s_0^+ \leq {s'}_0^+$ and $s_1^+ 0 \dots 0 s_k^+ \leq {s'}_j^+ 0 \dots 0 {s'}_l^+$ for a certain $j\geq 1$. From $s_1^+ 0 \dots 0 s_k^+ \leq {s'}_j^+ 0 \dots 0 {s'}_l^+$, one can prove as before (or by an additional induction argument on $k$) that $s_1^+ \leq {s'}_j^+$ and $s_2^+ 0 \dots 0 s_k^+ \leq {s'}_{j_2}^+ 0 \dots 0 {s'}_l^+$ for a certain $j_2 \geq j+1$. In the end, we have $s_0^+ \leq {s'}_0^+$ and $(s_1^+ , \dots,  s_k^+) \leq^* ({s'}_1^+ , \dots, {s'}_l^+)$. This yields $h_n(s) < h_n(s')$. The reverse direction $h_n(s) < h_n(s') \rightarrow s<s'$ can be proven in a similar way.

%
\end{proof}

\begin{corollary}\label{cor:equalordertypeS_nstrong}
$o(\S_{n+1}^s) = o(\S_n^s) \otimes o((\S_n^s)^*)$.
\end{corollary}

Hence, from the maximal order type of $\S_1^s$, which is the ordinal $\omega$, one can calculate the maximal order types of all $\S_n^s$.
Following the same template, one also has the following lemma.

\begin{lemma}\label{equalordertypeS_n+1andS_n}
$o(\S_{n+1}^w) = o(\S_{n+1}^w[0]) =o(\S_{n+1}^s[0])  =  o((\S_n^s)^*)$.
\end{lemma}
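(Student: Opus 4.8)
The plan is to establish the chain of equalities
\[
o(\S_{n+1}^w) = o(\S_{n+1}^w[0]) = o(\S_{n+1}^s[0]) = o((\S_n^s)^*)
\]
by producing, for each consecutive pair, a quasi-embedding (or an order-isomorphism) in each direction and invoking Lemma \ref{quasi-embedding}. First I would handle $o(\S_{n+1}^w) = o(\S_{n+1}^w[0])$: the inclusion $\S_{n+1}^w[0] \hookrightarrow \S_{n+1}^w$ is trivially a quasi-embedding, giving $\leq$; for the reverse, map an arbitrary $s = s_0\cdots s_{k-1} \in \S_{n+1}^w$ to $0\,s_0\cdots s_{k-1} \in \S_{n+1}^w[0]$ (prepending a $0$). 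Since $0$ is the minimal label, the weak gap-condition between two such prepended sequences at the initial block imposes no new constraint (no $s'_j$ can be strictly below $0$), so this is an order-isomorphism onto its image, hence a quasi-embedding, giving $\geq$.

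Next, for $o(\S_{n+1}^w[0]) = o(\S_{n+1}^s[0])$, the key observation is that the \emph{only} difference between $\leq^w_{gap}$ and $\leq^s_{gap}$ is condition 3, which constrains the labels $s'_j$ for $j < f(0)$ to be $\geq s_0$. But on $\S_{n+1}[0]$ we have $s_0 = 0$, which is automatically satisfied since all labels are $\geq 0$. Therefore $\leq^w_{gap}$ and $\leq^s_{gap}$ literally coincide as relations on $\S_{n+1}[0]$, so $\S_{n+1}^w[0] = \S_{n+1}^s[0]$ as partial orders and the maximal order types are trivially equal.

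Finally, for $o(\S_{n+1}^s[0]) = o((\S_n^s)^*)$, I would reuse the bijection $h_n$ from the proof of Theorem \ref{congruenceS_n+1strongandS_nstrong}. An element of $\S_{n+1}^s[0]$ starting with $0$ is of the form $0\, s_1^+ 0 \cdots 0 s_k^+$ (i.e.\ its leading $\S_n^s$-block $s_0$ is empty), so $h_n$ restricts to a bijection between $\S_{n+1}^s[0]$ and $\{\varepsilon\} \times (\S_n^s)^*$, which is canonically order-isomorphic to $(\S_n^s)^*$. The argument in the proof of Theorem \ref{congruenceS_n+1strongandS_nstrong} that $h_n$ is order-preserving in both directions applies verbatim to this restriction (the first coordinate is constantly $\varepsilon$ and contributes nothing), yielding $\S_{n+1}^s[0] \cong (\S_n^s)^*$ and hence equality of maximal order types.

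I expect the main obstacle to be the careful verification that prepending a $0$ to an element of $\S_{n+1}^w$ really does give a quasi-embedding into $\S_{n+1}^w[0]$ — one must check that a witnessing increasing function $f$ for the images can be adjusted so that it sends the prepended $0$ to the prepended $0$, using the fact that $0$ is minimal so no gap-condition obstruction arises at the front; the rest is then bookkeeping. The other two equalities are essentially definitional once the right viewpoint (condition 3 being vacuous, and $h_n$ restricting cleanly) is adopted.
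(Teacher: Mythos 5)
Your proof follows the same route as the paper: inclusion plus the prepend-$0$ quasi-embedding for the first equality, vacuousness of condition 3 on $\S_{n+1}[0]$ for the second, and the restriction of $h_n$ for the third. One claim in your first step is wrong, however: the map $s \mapsto 0s$ is \emph{not} an order-isomorphism onto its image, and the relevant constraint is not that some $s'_j$ would have to be $\geq 0$. Condition 2 of the weak gap-embeddability relation, applied to the pair consisting of the prepended $0$ and the first letter $s_0$ of $s$, demands that every letter of $0s'$ strictly between the images of these two positions be $\geq s_0$ --- a genuinely new constraint when $s_0 > 1$. Concretely, $2 \leq_{gap}^w 12$ but $02 \not\leq_{gap}^w 012$: the only label-respecting embedding must send the $0$ to position $0$ and the $2$ to position $2$, and the intervening $1$ is $<2$. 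What survives (and is all you need, and all the paper claims) is the quasi-embedding property: from a witness for $0s \leq_{gap}^w 0s'$ one obtains a witness for $s \leq_{gap}^w s'$ simply by discarding the image of the prepended $0$ and shifting indices --- no appeal to the minimality of $0$ is needed for that direction. With the justification of this step repaired, the rest of your argument is correct and coincides with the paper's proof.
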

\begin{proof}
The equality $o(\S_{n+1}^s[0])  =  o((\S_n^s)^*)$ follows from the proof of Theorem \ref{congruenceS_n+1strongandS_nstrong}. $o(\S_{n+1}^w[0]) =o(\S_{n+1}^s[0])$ is trivial as they refer to the same ordering. To prove $o(\S_{n+1}^w) = o(\S_{n+1}^w[0])$, note that $ \S_{n+1}^w[0] \subseteq \S_{n+1}^w$, hence $o(\S^w_n[0]) \leq o(\S_n^w)$.
Furthermore, the mapping $e$ which plots $s_0\dots s_{k-1}$ to $0s_0\dots s_{k-1}$ is a quasi-embedding from $\S_n^w$ to $\S^w_n[0]$. Hence, $o(\S_n^w) \leq o(\S^w_n[0])$.

\end{proof}

\medskip

These results yield for example 
\[o(\S_2^w) = \omega^{\omega^{\omega}}.\] 
We are especially interested in substructures of the $\wpo$'s $\S_n^w$ and $\S_n^s$ such that their maximal order types are exactly equal to an $\omega$-tower, meaning it is of the form $\omega^{\omega^{\cdot^{\cdot^{\cdot^{\omega}}}}}$ (without any `$+1$'). Thereon, using Theorem \ref{congruenceS_n+1strongandS_nstrong}, Corollary \ref{cor:equalordertypeS_nstrong} and Lemma \ref{equalordertypeS_n+1andS_n}, we define the following.

\begin{definition}
Let $\overline{\S}_n$ be the subset of $\S_n$ which consists of all sequences $s_0\dots s_{k-1}$ in $\S_n$ such that for all $i<k-1$, $s_i - s_{i+1} \geq -1$. This means that if $s_i=j$, then $s_{i+1}$ is an element in $\{0,\dots,j+1\}$. For example $02\notin \overline{\S}_3$. Like in Definition \ref{definitieweakstrongnormalgapsequences}, we denote the subset of $\overline{\S}_n$ that fulfill the extra condition $s_0\leq i$ by $\overline{\S}_n[i]$. We denote $(\overline{\S}_n,\leq_{gap}^w)$ by $\overline{\S}_n^w$, $(\overline{\S}_n,\leq_{gap}^s)$ by $\overline{\S}_n^s$, $(\overline{\S}_n[i],\leq_{gap}^w)$ by $\overline{\S}^w_n[i]$ and $(\overline{\S}_n[i],\leq_{gap}^s)$ by $\overline{\S}^s_n[i]$.
\end{definition}

\begin{lemma}
$o(\overline{\S}_{n+1}^w) = o(\overline{\S}^w_{n+1}[0]) =o(\overline{\S}^s_{n+1}[0]) =  o((\overline{\S}^s_n[0])^*) = o((\overline{\S}^w_n[0])^*) = o((\overline{\S}^w_n)^*)$.
\end{lemma}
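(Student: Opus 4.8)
The plan is to mimic, step by step, the chain of equalities established for $\S$ in Lemma \ref{equalordertypeS_n+1andS_n}, checking at each link that the relevant maps and inclusions respect the extra closure condition defining $\ov{\S}_n$ (namely $s_i - s_{i+1} \geq -1$).

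First I would establish the analogue of Theorem \ref{congruenceS_n+1strongandS_nstrong} restricted to the barred structures with the extra constraint that the first letter is $0$: that is, $\ov{\S}^s_{n+1}[0] \cong \ov{\S}^s_n[0] \times (\ov{\S}^s_n[0])^*$. Here the key observation is that the bijection $h_n$ of Theorem \ref{congruenceS_n+1strongandS_nstrong} sends $s = s_0^+ 0 \dots 0 s_k^+$ with first letter $0$ to $(s_0,(s_1,\dots,s_k))$, and I need to verify that $s$ satisfies the barred condition if and only if every $s_i$ does and moreover each $s_i$ itself starts with a letter $\leq 0$, i.e. with $0$. The point is: in $s$, the descent from the last letter of $s_i^+$ to the following $0$ is automatically allowed (a drop to $0$ is always fine), and the ascent from that $0$ to the first letter of $s_{i+1}^+$ is constrained by the barred condition to be at most $+1$, which says the first letter of $s_{i+1}^+$ is $0$ or $1$, i.e. the first letter of $s_{i+1}$ is $0$; and inside each block the barred condition for $s$ transfers verbatim to the barred condition for $s_i$ (shifting by $+1$ does not change differences). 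Since $\ov{\S}^s_n[0]$ is exactly the set of barred sequences starting with $0$, this gives the congruence, and hence by Theorem \ref{maximal order type sum product and higman} the equality $o(\ov{\S}^s_{n+1}[0]) = o(\ov{\S}^s_n[0]) \otimes o((\ov{\S}^s_n[0])^*)$ — though for the lemma I only need the weaker consequence $o(\ov{\S}^s_{n+1}[0]) = o((\ov{\S}^s_n[0])^*)$, which already drops out of the bijective correspondence between $\ov{\S}^s_{n+1}[0]\setminus\{\varepsilon\}$ and (essentially) $(\ov{\S}^s_n[0])^+$ since a sequence in $\ov{\S}^s_{n+1}[0]$ starting with $0$ has $s_0 = \varepsilon$.

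Next, the equalities $o(\ov{\S}^w_{n+1}[0]) = o(\ov{\S}^s_{n+1}[0])$ and $o((\ov{\S}^s_n[0])^*) = o((\ov{\S}^w_n[0])^*)$ are trivial, since on sequences whose first letter is $0$ the weak and strong gap conditions coincide (condition 3 is vacuous when $s_0 = 0$), so these are literally the same partial orders. For $o(\ov{\S}^w_{n+1}) = o(\ov{\S}^w_{n+1}[0])$ I would copy the argument from Lemma \ref{equalordertypeS_n+1andS_n}: one inclusion $\ov{\S}^w_{n+1}[0]\subseteq \ov{\S}^w_{n+1}$ gives $o(\ov{\S}^w_{n+1}[0]) \leq o(\ov{\S}^w_{n+1})$ by Lemma \ref{quasi-embedding}, and for the reverse the prepend-a-zero map $e: s_0\dots s_{k-1} \mapsto 0s_0\dots s_{k-1}$ is a quasi-embedding from $\ov{\S}^w_{n+1}$ into $\ov{\S}^w_{n+1}[0]$ — here I must additionally check that $e$ lands in $\ov{\S}_{n+1}$, i.e. that prepending $0$ preserves the barred condition, which holds because the new constraint is $0 - s_0 \geq -1$, automatic. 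Finally $o((\ov{\S}^w_n[0])^*) = o((\ov{\S}^w_n)^*)$ follows by applying the star functor to the order-isomorphism-up-to-maximal-order-type $o(\ov{\S}^w_n) = o(\ov{\S}^w_n[0])$; more carefully, the quasi-embeddings between $\ov{\S}^w_n$ and $\ov{\S}^w_n[0]$ lift to quasi-embeddings between their Higman powers, so the maximal order types of the powers agree.

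The main obstacle I anticipate is the bookkeeping in the first step: one has to be careful that the barred condition, which is a condition on consecutive letters, interacts correctly with the decomposition at the $0$'s — in particular the subtle point that the barred condition on $s$ forces each sub-block $s_i$ (for $i\geq 1$) to itself begin with $0$, which is precisely what makes the target of the bijection $\ov{\S}^s_n[0]$ rather than $\ov{\S}^s_n$. Everything else is a faithful transcription of the unbarred arguments, with the only extra checks being that the three maps involved (the decomposition $h_n$, the prepend-zero map $e$, and the Higman-power lifts) stay inside the barred structures; each of these checks reduces to the trivial fact that a descent to $0$ or an ascent by at most $1$ never violates the barred condition.
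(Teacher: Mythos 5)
Most of your chain is sound: the block decomposition at the occurrences of $0$ does show $\ov{\S}^s_{n+1}[0]\cong(\ov{\S}^s_n[0])^*$ (your observation that the barred condition forces every block after the first to begin with $0$ is exactly the right point, and you correctly retreat from the overstated congruence $\ov{\S}^s_{n+1}[0]\cong\ov{\S}^s_n[0]\times(\ov{\S}^s_n[0])^*$, whose first factor in fact degenerates to $\{\varepsilon\}$), and the identification of the weak and strong orders on sequences beginning with $0$ is fine. The genuine gap is in the first equality, $o(\ov{\S}^w_{n+1})=o(\ov{\S}^w_{n+1}[0])$. You claim that prepending $0$ preserves the barred condition ``because the new constraint is $0-s_0\geq-1$, automatic''. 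But $0-s_0\geq-1$ says $s_0\leq 1$, which fails for every sequence whose first letter is at least $2$; the paper's own example $02\notin\ov{\S}_3$ is precisely a prepended $0$. So for $n+1\geq 3$ the map $e$ does not land in $\ov{\S}_{n+1}[0]$, and this is not cosmetic: the obvious repairs (prepending the staircase $01\cdots(s_0-1)$, or splitting by first letter) either fail to be quasi-embeddings or only give bounds such as $o(\ov{\S}^w_{n+1}[0])\otimes k$ or $o(\ov{\S}^w_n)\otimes o(\ov{\S}^w_{n+1}[0])$, which strictly exceed $\omega_{2n+1}$. The same broken map is what you invoke in the last step when you ``lift the quasi-embeddings to the Higman powers''.

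A correct route for the missing inequality is to send $s=a_0^+0a_1^+0\cdots0a_k^+$ to the Higman word $(a_0,a_1,\dots,a_k)\in(\ov{\S}^w_n)^*$. This is a quasi-embedding for $\leq^w_{gap}$: given a Higman embedding $g$ with $a_i\leq^w_{gap}b_{g(i)}$, map each block of $s$ into the corresponding block of $t$ and each separator $0$ of $s$ to the separator of $t$ immediately preceding block $g(i+1)$; the only gap conditions to check are at the junctions, where the required lower bounds are $0$ (stepping into a separator) or $1$ (stepping from a separator into $a_{i+1}^+$, all of whose candidate predecessors inside $b_{g(i+1)}^+$ carry letters $\geq1$ because $a_{i+1}$ and $b_{g(i+1)}$ start with $0$). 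This yields $o(\ov{\S}^w_{n+1})\leq o((\ov{\S}^w_n)^*)$, and $o((\ov{\S}^w_n)^*)=o((\ov{\S}^w_n[0])^*)$ then follows from Theorem \ref{maximal order type sum product and higman} together with $o(\ov{\S}^w_n)=o(\ov{\S}^w_n[0])$, available by induction on $n$ --- not from lifting a prepend-$0$ map. With that, the chain closes, since $(\ov{\S}^w_n[0])^*\cong\ov{\S}^w_{n+1}[0]\subseteq\ov{\S}^w_{n+1}$ gives the converse inequality.
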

\begin{proof}
Similar as in Theorem \ref{congruenceS_n+1strongandS_nstrong}, Corollary \ref{cor:equalordertypeS_nstrong} and Lemma \ref{equalordertypeS_n+1andS_n}.
\end{proof}

\begin{corollary}
For all $n$, $o(\overline{\S}_n^w) = \omega_{2n-1}$.
\end{corollary}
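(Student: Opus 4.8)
The plan is to prove $o(\overline{\S}_n^w) = \omega_{2n-1}$ by induction on $n$, using the preceding lemma to reduce the computation to a tower of applications of the $X \mapsto X^*$ operation and Schmidt's formula for $o(X^*)$ from Theorem \ref{maximal order type sum product and higman}. First I would establish the base case $n=1$: the structure $\overline{\S}_1^w$ consists of sequences over the one-element set $\{0\}$, so $\overline{\S}_1^w \cong \N^*$ with $o = \omega^{\omega^{\omega}} = \omega_3$? No — here one must be careful: $\overline{\S}_1$ is just finite sequences of $0$'s, which under $\leq_{gap}^w$ is order-isomorphic to $(\N,\leq)$, so $o(\overline{\S}_1^w) = \omega = \omega_1 = \omega_{2\cdot 1 - 1}$, giving the base case.

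For the inductive step, I would use the lemma immediately preceding the corollary, which gives $o(\overline{\S}_{n+1}^w) = o((\overline{\S}_n^w)^*)$. By the induction hypothesis $o(\overline{\S}_n^w) = \omega_{2n-1}$, which for $n \geq 1$ is an ordinal of the form $\omega^{\omega^{\cdots}}$ with at least one $\omega$ in the exponent tower, hence in particular it is of the shape $\omega^{\gamma}$ for some $\gamma > 0$ and is not an epsilon number (for $n=1$ it equals $\omega$, which is finite in the sense relevant to... wait). Here lies the one genuinely delicate point: Schmidt's formula has three cases depending on whether $o(X)$ is finite, an epsilon number plus a finite ordinal, or "otherwise." For $n=1$, $o(\overline{\S}_1^w) = \omega$, which is \emph{not} finite, not of the form $\varepsilon + m$, so we land in the "otherwise" case and get $o((\overline{\S}_1^w)^*) = \omega^{\omega^{\omega}} = \omega_3 = \omega_{2\cdot 2 - 1}$. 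For $n \geq 2$, $\omega_{2n-1}$ is a tower of height $2n-1 \geq 3$, which is again neither finite nor an epsilon number nor $\varepsilon + m$, so the "otherwise" case applies and $o((\overline{\S}_n^w)^*) = \omega^{\omega^{\omega_{2n-1}}} = \omega_{2n-1+2} = \omega_{2(n+1)-1}$, closing the induction.

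Thus the key steps, in order, are: (1) identify $\overline{\S}_1^w$ with $(\N, \leq)$ and record $o = \omega = \omega_1$; (2) invoke the preceding lemma to get the recursion $o(\overline{\S}_{n+1}^w) = o((\overline{\S}_n^w)^*)$; (3) verify that $\omega_{2n-1}$ always falls under the third (``otherwise'') clause of Schmidt's formula — in particular check that $\omega_{2n-1}$ is never of the form $\varepsilon + m$ with $\varepsilon$ an epsilon number, which holds since $\omega_{2n-1} = \omega^{\omega_{2n-2}}$ has the form $\omega^\delta$ with $\delta$ itself not $0$ and not causing $\omega^\delta$ to be an epsilon number (as $\omega_{2n-2} < \omega_{2n-1}$, so $\omega_{2n-1}$ is not a fixed point of $\xi \mapsto \omega^\xi$); (4) compute $\omega^{\omega^{\omega_{2n-1}}} = \omega_{2n+1} = \omega_{2(n+1)-1}$ from the definition of the $\omega_k[\cdot]$ notation.

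The main obstacle — really the only non-bookkeeping issue — is the case analysis in Schmidt's formula for $o(X^*)$: one must confirm that at every stage of the induction the relevant ordinal is neither finite nor of the shape (epsilon number) $+$ (finite), so that the clean ``otherwise'' clause $o(X^*) = \omega^{\omega^{o(X)}}$ applies and the tower grows by exactly two each time. Once that is dispatched, the corollary follows by a transparent induction, and I would present it compactly rather than spelling out each arithmetic identity.
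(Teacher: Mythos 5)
Your proposal is correct and follows exactly the route the paper intends: the corollary is immediate by induction from the preceding lemma's recursion $o(\overline{\S}_{n+1}^w)=o((\overline{\S}_n^w)^*)$ together with Schmidt's formula, and your verification that $\omega_{2n-1}$ always lands in the ``otherwise'' clause (being infinite and below $\varepsilon_0$, hence never of the form $\varepsilon+m$) is the one point that needs checking and you check it correctly. The base case $o(\overline{\S}_1^w)=\omega=\omega_1$ and the arithmetic $\omega^{\omega^{\omega_{2n-1}}}=\omega_{2(n+1)-1}$ are likewise right.
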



\subsection{Ordinal notation systems}

In this subsection, we introduce several ordinal notation systems for ordinals smaller than $\varepsilon_0$. All of them do not use the addition operator.

\subsubsection{The Veblen hierarchy}
Assume that $(T,<)$ is a notation system with $otype(T) \in \varepsilon_0 \backslash \{0\}$. Define the representation system $\varphi_T 0 $ recursively as follows.

\begin{definition}
\begin{itemize}
\item $0 \in \varphi_T 0$,
\item if $\alpha \in \varphi_T 0$ and $t \in T$, then $\varphi_t \alpha \in \varphi_T 0$.
\end{itemize}
\end{definition}

Define on $\varphi_T 0$ the following total order.

\begin{definition} For $\alpha, \beta \in \varphi_T 0$, $\alpha < \beta$ is valid if
\begin{itemize}
\item $\alpha =0$ and $\beta \neq 0$,
\item $\alpha = \varphi_{t_1} \alpha'$, $\beta = \varphi_{t_2} \beta'$ and one of the following cases holds:
\begin{enumerate}
\item $t_1 < t_2$ and $\alpha ' < \beta$,
\item $t_1 = t_2$ and $\alpha' < \beta'$,
\item $t_1 > t_2$ and $\alpha \leq \beta'$.
\end{enumerate}
\end{itemize}
\end{definition}

\begin{theorem}\label{veblenhierarchy} Assume $otype(T) = \alpha \in \varepsilon_0 \backslash \{0\}$. Then $(\varphi_T0,<)$ is a notation system for the ordinal $\omega^{\omega^{-1 + \alpha}}$.
\end{theorem}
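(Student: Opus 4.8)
The plan is to prove this by a simultaneous induction showing that $(\varphi_T 0, <)$ is a linear order whose order type is exactly $\omega^{\omega^{-1+\alpha}}$, where $\alpha = otype(T)$. The key observation is that, without addition, the terms $\varphi_t \beta$ for $t \in T$ and $\beta \in \varphi_T 0$ are precisely the "additive principal" building blocks, and the full ordinal $\omega^{\omega^{-1+\alpha}}$ is built as a supremum of $\omega$-powers of such blocks; but since the notation has no addition, $(\varphi_T 0, <)$ should in fact match $\omega^{-1+\alpha}$ under the Veblen correspondence one level up, i.e.\ the terms $\varphi_t\beta$ enumerate the additive principal numbers below $\omega^{\omega^{-1+\alpha}}$, and their order type is $\omega^{-1+\alpha}$ — wait, more carefully: the set of strongly critical / additive principal ordinals below the Veblen ordinal $\varphi_{-1+\alpha}(0)$ has order type $\varphi_{-1+\alpha}(0)$ itself in the classical Veblen setting with parameters in $\omega$, but here the index set is $T$ of order type $\alpha$, and the fixed-point hierarchy is only $\alpha$ levels tall. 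I would reconcile this by first identifying $(\varphi_T 0,<)$ with a standard Veblen-style term system and invoking (or re-deriving) the classical fact that the $\alpha$-ary Veblen function restricted to addition-free terms names exactly the ordinals $\gamma$ with the property that $\gamma$ is additively principal, and that these have order type $\omega^{-1+\alpha}$ below $\omega^{\omega^{-1+\alpha}}$.

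Concretely, the steps I would carry out are: (1) verify that $<$ on $\varphi_T 0$ is a linear order — this is the routine but slightly fiddly lexicographic-with-subterm-comparison argument, proved by induction on the sum of the term heights, checking irreflexivity, transitivity, and totality against the three-case definition; the subtlety is the interplay of cases 1 and 3 (the "$\alpha' < \beta$" and "$\alpha \leq \beta'$" recursive calls), which is exactly the standard Veblen normal-form comparison and is well-trodden. (2) Define an order-preserving map $o\colon \varphi_T 0 \to \mathrm{Ord}$ by recursion: $o(0) = 0$ and $o(\varphi_t \beta) = \varphi^{\mathrm{cl}}_{f(t)}(o(\beta))$, where $f\colon T \to \alpha$ is the order isomorphism and $\varphi^{\mathrm{cl}}$ is the genuine (set-theoretic) Veblen function of arity indexed by $\alpha$. (3) Show $o$ is injective and order-preserving, which reduces to matching the three-case definition above with the known comparison law for Veblen normal forms — each case corresponds exactly. (4) Compute the image: show $o$ surjects onto the set of additively principal ordinals below $\omega^{\omega^{-1+\alpha}}$; equivalently, every additively principal $\gamma < \omega^{\omega^{-1+\alpha}}$ has a (unique) representation $\varphi_t\beta$. (5) Conclude that $otype(\varphi_T 0, <)$ equals the order type of that set of additive principals, which is $\omega^{-1+\alpha}$... and then note that the theorem actually claims the order type is the ordinal $\omega^{\omega^{-1+\alpha}}$ — so I must instead be representing \emph{all} ordinals below $\omega^{\omega^{-1+\alpha}}$, not just the principal ones. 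I would therefore re-examine step (2): since $0 \in \varphi_T 0$ and the system is closed under $\varphi_t(\cdot)$ but also, crucially, the clause "$\varphi_t\alpha$" with $t$ allowed to be the \emph{least} element of $T$ already gives $\omega^{\alpha}$-type growth at the bottom, so iterating $\varphi_{t_{\min}}$ yields all of $\omega^{\omega^{-1+\alpha}}$ in the limit. The cleanest route is to prove directly by induction on $\gamma < \omega^{\omega^{-1+\alpha}}$ that $\gamma$ has a unique normal form as a term in $\varphi_T 0$, using the Cantor-normal-form-like decomposition that the addition-free Veblen terms provide.

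The main obstacle I anticipate is pinning down exactly which ordinals the terms $\varphi_t\beta$ denote and proving the surjectivity/uniqueness of normal forms in step (4)–(5): one must show that the single constant $0$ together with the unary operations $\beta \mapsto \varphi_t\beta$ (no sum!) suffices to name every ordinal below $\omega^{\omega^{-1+\alpha}}$, which is a genuine (if standard) combinatorial fact about the addition-free fragment of the Veblen hierarchy — it works precisely because $\omega^{\omega^{-1+\alpha}}$ is closed under the Veblen operations and because $\varphi_{t_{\min}}$ already absorbs addition at the level of exponents. I would structure the write-up so that the linearity check (step 1) is dispatched quickly by reference to the standard Veblen comparison, and the bulk of the argument is the normal-form existence-and-uniqueness induction establishing the bijection with $\omega^{\omega^{-1+\alpha}}$.
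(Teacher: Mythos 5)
The paper offers no proof of Theorem \ref{veblenhierarchy}; it simply cites \cite{binarytreesgyesik}, so your proposal has to stand on its own. It does not: the central step (2), interpreting $\varphi_t\beta$ as the genuine set-theoretic Veblen function $\varphi^{\mathrm{cl}}_{f(t)}(o(\beta))$, fails for this term system. First, $\varphi_T 0$ carries no normal-form side conditions --- every $t\in T$ may be applied to every term --- so for $t_0<t_1$ in $T$ it contains both $\varphi_{t_1}0$ and $\varphi_{t_0}\varphi_{t_1}0$; by clause 3 of the ordering these satisfy $\varphi_{t_1}0 < \varphi_{t_0}\varphi_{t_1}0$, yet classically $\varphi^{\mathrm{cl}}_0(\varphi^{\mathrm{cl}}_1(0)) = \omega^{\varepsilon_0} = \varepsilon_0 = \varphi^{\mathrm{cl}}_1(0)$. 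So your $o$ is neither injective nor strictly order-preserving, and step (3) collapses. Second, even where the classical reading is injective it lands in a sparse set that is not an initial segment (for $otype(T)=1$ the terms would denote $0,1,\omega,\omega^\omega,\dots$), so neither the supremum of the image nor the order type of ``all additive principals below the bound'' yields the order type of $\varphi_T0$; and your final patch --- that every $\gamma<\omega^{\omega^{-1+\alpha}}$ has a unique normal form as a term --- is false under the interpretation you defined (already $2<\omega=\omega^{\omega^{-1+1}}$ has no representing term when $otype(T)=1$). The oscillation between $\omega^{-1+\alpha}$ and $\omega^{\omega^{-1+\alpha}}$ in steps (4)--(5) is a symptom of this unresolved mismatch, not a resolution of it.

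What is actually needed is an ordinal assignment tailored to the addition-free system: since there is no $+$, the least function $\varphi_{t_{\min}}$ must act essentially as a successor-type operation and each higher $\varphi_t$ must contribute an appropriate multiplicative $\omega$-power factor, so that the terms enumerate an initial segment of order type exactly $\omega^{\omega^{-1+\alpha}}$; equivalently, one computes the order type by induction on $otype(T)$, showing that adjoining a new largest index raises the order type from $\omega^{\omega^{-1+\alpha}}$ to $\omega^{\omega^{\alpha}}$. This is precisely the flavour of the maps the paper does construct elsewhere (the map $\chi$ in Theorem \ref{lineartheta2labelsproof2} and the maps $\tau_m$ in the upper-bound section), and it is the part of the argument your proposal never supplies.
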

\begin{proof}
A proof of this fact can be found in \cite{binarytreesgyesik}.
\end{proof}

\subsubsection{Using the $\pi_i$-collapsing functions}

We use an ordinal notation system that employs the $\pi_i$-collapsing functions. These functions are based on Buchholz's $\Psi_i$-functions \cite{BuchholzAPAL1986}. We state some basic facts that the reader can find in \cite{BuchholzAPAL1986,schuttesimpson}.

\begin{definition}\label{def:Omega_i}
Let $\Omega_0:=1$ and define $\Omega_i$ as the $i^{\text{th}}$ regular ordinal number strictly above $\omega$. Define $\Omega_\omega$ as $\sup_i \Omega_i$.
\end{definition}

Define the sets $B_i^m(\alpha)$ and $B_i(\alpha)$ and the ordinal numbers $\pi_i\alpha$ as follows.

\begin{definition}
\begin{itemize}[itemsep=0ex]
\item If $\gamma = 0$ or $\gamma < \Omega_i$, then $\gamma \in B^m_i(\alpha)$,
\item if $i\leq j$, $\be<\al$, $\be \in B_j(\beta)$ and $\be \in B_i^m(\al)$, then $\pi_j\be \in B_i^{m+1}(\al)$,
\item define $B_i(\al)$ as $\bigcup_{m<\omega} B_i^m(\al)$,
\item $\pi_i\al := \min\{\eta : \eta \notin B_i(\al)\}$.
\end{itemize}
\end{definition}

\begin{lemma}
\begin{enumerate}[itemsep=0ex]
\item if $i\leq j$ and $\al \leq \be$, then $B_i(\al) \subseteq B_j(\be)$ and $\pi_i\al \leq \pi_j\be$,
\item $\Omega_i \leq \pi_i\al < \Omega_{i+1}$,
\item $\pi_i0 = \Omega_i$,
\item $\al \in B_i(\al) $ and $\al < \be$ yields $\pi_i \al < \pi_i\be$,
\item $\al \in B_i(\al) $, $\be \in B_i(\be)$ and $\pi_i \al = \pi_i \be$ yields $\al = \be$.
\end{enumerate}
\end{lemma}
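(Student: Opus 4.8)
The statement collects the standard basic properties of Buchholz-style collapsing functions, and the plan is to prove the five items in the order (3), (2), (1), (4), (5), each by a short induction on the stage index $m$ in the definition of $B_i^m$.

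First I would dispose of (3) and (2), which do not rely on the others. For (3): $B_i^0(0) = \{\ga : \ga = 0 \lor \ga < \Om_i\} = \Om_i$, and no ordinal can be added at a later stage since the generating clause requires some $\be < 0$; hence $B_i(0) = \Om_i$ and $\pi_i 0 = \min(\mathrm{On}\setminus \Om_i) = \Om_i$. For (2), the lower bound $\Om_i \le \pi_i\al$ is immediate because every $\ga < \Om_i$ already lies in $B_i^0(\al)$. For the upper bound I would show by induction on $m$ that $|B_i^m(\al)| \le \max(|\Om_i|,\aleph_0)$: passing from stage $m$ to $m+1$, each ordinal $\eta$ already present contributes at most one new ordinal $\pi_l\eta$ for each $l \ge i$, hence at most $\aleph_0$ many, so $|B_i^{m+1}(\al)| \le |B_i^m(\al)|\cdot\aleph_0$. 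Unioning over $m$ gives $|B_i(\al)| \le \max(|\Om_i|,\aleph_0) < \Om_{i+1}$; since $\Om_{i+1}$ is a regular cardinal it cannot be contained in $B_i(\al)$, so some ordinal below $\Om_{i+1}$ is omitted and $\pi_i\al < \Om_{i+1}$.

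Next, for (1), assuming $i \le j$ and $\al \le \be$ I would prove $B_i^m(\al) \subseteq B_j(\be)$ by induction on $m$. The base case is clear because $\Om_i \le \Om_j$. For the inductive step a new element of $B_i^{m+1}(\al)$ has the form $\pi_l\eta$ with $i \le l$, $\eta < \al$, $\eta \in B_l(\eta)$ and $\eta \in B_i^m(\al)$; by induction $\eta \in B_j^p(\be)$ for some $p$, and $\eta < \al \le \be$. If $j \le l$, the generating clause for $B_j$ puts $\pi_l\eta$ into $B_j^{p+1}(\be)$; if instead $l < j$, then by (2) $\pi_l\eta < \Om_{l+1} \le \Om_j$, so $\pi_l\eta \in B_j^0(\be)$. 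Either way $\pi_l\eta \in B_j(\be)$. The inequality $\pi_i\al \le \pi_j\be$ then follows since $B_i(\al)\subseteq B_j(\be)$ implies $\mathrm{On}\setminus B_j(\be) \subseteq \mathrm{On}\setminus B_i(\al)$, and the minimum of a subclass is at least that of the superclass.

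Finally, (4) and (5). For (4): from $\al \in B_i(\al)$ and (1) applied with $j:=i$ we get $\al \in B_i(\be)$, say $\al \in B_i^m(\be)$; then $i \le i$, $\al < \be$, $\al \in B_i(\al)$ and $\al \in B_i^m(\be)$ together trigger the generating clause, so $\pi_i\al \in B_i^{m+1}(\be) \subseteq B_i(\be)$. As $\pi_i\be \notin B_i(\be)$ by definition, $\pi_i\al \ne \pi_i\be$, and combined with $\pi_i\al \le \pi_i\be$ from (1) this gives $\pi_i\al < \pi_i\be$. Item (5) is then immediate by trichotomy: if, say, $\al < \be$, then (4) forces $\pi_i\al < \pi_i\be$, contradicting $\pi_i\al = \pi_i\be$, and symmetrically for $\be < \al$. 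The only place calling for real care is the cardinality bookkeeping in (2) — in particular the degenerate case $\Om_0 = 1$, where the bound reads $\aleph_0$ — since that is the one ingredient of (1) that must be secured before (1) itself is available; everything else is a routine induction on $m$.
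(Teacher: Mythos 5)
Your proof is correct, and it is essentially the standard argument: the paper itself gives no proof of this lemma, deferring to Buchholz \cite{BuchholzAPAL1986} and Sch\"utte--Simpson \cite{schuttesimpson}, where the same inductions on the stage $m$ and the same cardinality bound $|B_i(\al)|\leq\max(|\Omega_i|,\aleph_0)<\Omega_{i+1}$ are used. Your ordering of the items, proving (2) before (1) so that the case $i\leq l<j$ in the inductive step of (1) can be discharged via $\pi_l\eta<\Omega_{l+1}\leq\Omega_j$, is exactly the right dependency structure.
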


\begin{definition}
For ordinals $\alpha \in B_0(\Omega_\omega)$, define $G_i(\pi_j\alpha)$ as
\begin{equation*}
\begin{cases}
\emptyset  & \text{ if $j<i$,}\\
G_i\al\cup\{\al\} &\text{ otherwise.}
\end{cases}
\end{equation*}
Define $G_i(0)$ as $\emptyset$.
\end{definition}

This is well-defined, because one can prove that $\pi_j \alpha \in B_0(\Omega_\omega)$ yields $\alpha \in B_0(\Omega_\omega)$.

\begin{notation}
For a set of ordinals $A$ and an ordinal $\alpha$, we write $A < \alpha$ if for all $\beta \in A (\beta < \alpha)$.
\end{notation}

\begin{lemma}
If $\alpha \in B_0(\Omega_\omega)$, then $G_i(\alpha) < \beta$ iff $\alpha \in B_i(\beta)$.
\end{lemma}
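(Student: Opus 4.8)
The plan is to prove both implications simultaneously by induction on $\alpha \in B_0(\Omega_\omega)$. The crucial structural fact is that the notation system is addition-free: every $\alpha \in B_0(\Omega_\omega)$ is either $0$ or of the form $\pi_j\gamma$ with $\gamma \in B_0(\Omega_\omega)$, $\gamma < \Omega_\omega$ and $\gamma \in B_j(\gamma)$, and moreover this representation is unique (the interval bound $\Omega_j \le \pi_j\gamma < \Omega_{j+1}$ pins down $j$, and injectivity of $\pi_j$ on arguments lying in their own $B_j$ — part 5 of the preceding lemma — pins down $\gamma$). So the induction splits into a base case $\alpha = 0$ and an inductive step $\alpha = \pi_j\gamma$, where the induction hypothesis is available for $\gamma$ since $\gamma \in B_j(\gamma)$ forces $\gamma < \pi_j\gamma = \alpha$.

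For $\alpha = 0$ both sides hold trivially: $G_i(0) = \emptyset < \beta$, and $0 \in B_i^0(\beta) \subseteq B_i(\beta)$. For $\alpha = \pi_j\gamma$ I would distinguish two subcases matching the definition of $G_i$. If $j < i$, then $G_i(\pi_j\gamma) = \emptyset$, so the left-hand side holds vacuously; and $\pi_j\gamma < \Omega_{j+1} \le \Omega_i$ by part 2 of the lemma, so $\pi_j\gamma \in B_i^0(\beta) \subseteq B_i(\beta)$ — hence both sides are unconditionally true here. If $j \ge i$, then $G_i(\pi_j\gamma) = G_i(\gamma) \cup \{\gamma\}$, and the statement to prove becomes: ``$\gamma < \beta$ and $G_i(\gamma) < \beta$'' holds iff $\pi_j\gamma \in B_i(\beta)$.

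For the forward direction of this last equivalence, from $\gamma < \beta$ and $G_i(\gamma) < \beta$ the induction hypothesis for $\gamma$ gives $\gamma \in B_i(\beta)$, say $\gamma \in B_i^m(\beta)$; since also $i \le j$, $\gamma < \beta$ and $\gamma \in B_j(\gamma)$, the second clause defining $B_i^{m+1}(\beta)$ yields $\pi_j\gamma \in B_i^{m+1}(\beta) \subseteq B_i(\beta)$. For the converse, suppose $\pi_j\gamma \in B_i(\beta)$. Because $\pi_j\gamma \ge \Omega_j \ge \Omega_i$ (using $j \ge i$), $\pi_j\gamma$ cannot enter $B_i(\beta)$ through the first clause, so it enters through the second: $\pi_j\gamma = \pi_{j'}\gamma'$ with $i \le j'$, $\gamma' < \beta$, $\gamma' \in B_{j'}(\gamma')$ and $\gamma' \in B_i^m(\beta)$ for some $m$. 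By the uniqueness of the representation noted above, $j' = j$ and $\gamma' = \gamma$; hence $\gamma < \beta$ and $\gamma \in B_i(\beta)$, and the induction hypothesis for $\gamma$ gives $G_i(\gamma) < \beta$. Together with $\gamma < \beta$ this is precisely $G_i(\pi_j\gamma) < \beta$.

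The main obstacle — the one point needing genuine care — is the uniqueness of the representation $\pi_j\gamma$ invoked in the converse direction: one must argue that $\pi_j\gamma = \pi_{j'}\gamma'$ with $\gamma \in B_j(\gamma)$ and $\gamma' \in B_{j'}(\gamma')$ forces $j = j'$ and $\gamma = \gamma'$, which is exactly where parts 2 and 5 of the preceding lemma are used. One should also verify once and for all that membership $\pi_j\gamma \in B_0(\Omega_\omega)$ really does carry the side condition $\gamma \in B_j(\gamma)$ (it does, being part of the generating clause), so that both the induction hypothesis and the closure clause apply. Everything else is routine bookkeeping with the finite stages $B_i^m(\beta)$.
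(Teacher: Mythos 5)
Your proof is correct and follows essentially the same route as the paper: induction on the build-up of $\alpha$, with the trivial cases $\alpha=0$ and $\alpha=\pi_j\gamma$ for $j<i$, and the closure clause for $B_i^{m+1}(\beta)$ together with the induction hypothesis handling $j\geq i$. The only difference is one of detail: where the paper simply asserts the equivalence "$\delta\in B_i(\beta)$ and $\delta<\beta$ iff $\pi_j\delta\in B_i(\beta)$," you correctly spell out the converse direction via the uniqueness of the representation $\pi_j\gamma$ (using the interval bounds and part 5 of the preceding lemma), which is a legitimate and welcome filling-in of a step the paper leaves implicit.
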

\begin{proof} We prove this by induction on the length of construction of $\alpha$. If $\alpha=0$ or $\alpha = \pi_j\delta$ with $j<i$, then this is trivial. Assume $\alpha = \pi_j\delta$ with $j\geq i$. $\alpha = \pi_j \delta \in B_0(\Omega_\omega)$ yields $\delta \in B_j(\delta)$. Now, $G_i(\alpha) < \beta$ is valid iff $G_i(\delta) < \beta$ and $\delta < \beta$. By the induction hypothesis, this is equivalent with $\delta \in B_i(\beta)$ and $\delta < \beta$, which is equivalent with $\alpha = \pi_j \delta \in B_i(\beta)$ because $\delta \in B_j(\delta)$.
\end{proof}

\medskip

Now we define the ordinal notation systems $\pi(\omega)$ and $\pi(n)$, but first, we have to define a set of terms $\pi(\omega)'$ and $\pi(n)'$.

\begin{definition}
\begin{itemize}
\item $0 \in \pi(\omega)'$ and $0 \in \pi(n)'$,
\item if $\alpha \in \pi(\omega)'$, then $D_j \alpha \in \pi(\omega)'$,
\item if $\alpha \in \pi(n)'$ and $j<n$, then $D_j \alpha \in \pi(n)'$.
\end{itemize}
\end{definition}

\begin{definition} Let $\alpha, \beta \in \pi(\omega)'$ or $\alpha, \beta \in \pi(n)'$. Then define $\alpha < \beta$ if 
\begin{enumerate}
\item $\alpha = 0$ and $\beta \neq 0$,
\item $\alpha = D_j \alpha'$, $\beta = D_k \beta'$ and $i<j$ or $i=j$ and $\alpha' < \beta'$.
\end{enumerate}
\end{definition}

\begin{lemma}
$<$ is a linear order on $\pi(\omega)'$ and $\pi(n)'$.
\end{lemma}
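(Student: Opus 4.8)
The plan is to show that the relation $<$ defined on the term sets $\pi(\omega)'$ and $\pi(n)'$ is irreflexive, transitive, and total; since both cases are handled by literally the same clauses (the only difference is the range of the subscripts $j$, which is irrelevant to the order-theoretic argument), it suffices to carry out the argument once, say for $\pi(\omega)'$, and note that the same proof restricts verbatim to $\pi(n)'$. Note also that there is a typo in the definition: in clause (2) the indices should read ``$\alpha = D_j\alpha'$, $\beta = D_k\beta'$ and $j<k$, or $j=k$ and $\alpha'<\beta'$'' — I will use the corrected reading throughout.

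First I would set up a structural induction on the sum of the lengths of the two terms being compared (or equivalently a double induction on the build-up of $\alpha$ and $\beta$). For \emph{totality}: given $\alpha,\beta\in\pi(\omega)'$, if either is $0$ the first clause settles it; otherwise write $\alpha=D_j\alpha'$, $\beta=D_k\beta'$, and compare $j$ and $k$ as natural numbers — if $j\neq k$ one of clause (2)'s disjuncts applies, and if $j=k$ the induction hypothesis applied to $\alpha',\beta'$ (a strictly shorter pair) gives one of $\alpha'<\beta'$, $\alpha'=\beta'$, $\alpha'>\beta'$, yielding $\alpha<\beta$, $\alpha=\beta$, $\alpha>\beta$ respectively. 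For \emph{irreflexivity}: an easy induction shows $\alpha<\alpha$ is impossible, since it would force either $j<j$ or $\alpha'<\alpha'$, the latter excluded by the induction hypothesis. The mutual exclusivity of $\alpha<\beta$ and $\beta<\alpha$ follows similarly: $\alpha<\beta$ and $\beta<\alpha$ would give ($j<k$ or [$j=k$ and $\alpha'<\beta'$]) together with ($k<j$ or [$k=j$ and $\beta'<\alpha'$]), and case analysis on $j$ versus $k$ reduces every case to the induction hypothesis or to $j<j$.

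For \emph{transitivity}, suppose $\alpha<\beta$ and $\beta<\gamma$ with all three nonzero (if $\alpha=0$ and $\gamma\neq 0$ we are done by clause (1), and $\gamma=0$ is impossible since nothing is $<0$). Write $\alpha=D_j\alpha'$, $\beta=D_k\beta'$, $\gamma=D_l\gamma'$. From the hypotheses, $j\leq k\leq l$, with $j=k$ forcing $\alpha'<\beta'$ and $k=l$ forcing $\beta'<\gamma'$. If $j<l$ we conclude $\alpha<\gamma$ immediately. If $j=l$, then $j=k=l$, so $\alpha'<\beta'$ and $\beta'<\gamma'$, and the induction hypothesis (applied to the shorter triple $\alpha',\beta',\gamma'$) gives $\alpha'<\gamma'$, hence $\alpha<\gamma$. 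This establishes that $<$ is a strict linear order.

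The main obstacle, such as it is, is purely bookkeeping: one must be scrupulous about which pairs of terms the induction hypothesis is being applied to, so that the induction is well-founded — the right measure is the total number of symbols in the two (or three) terms under consideration, and every appeal to the hypothesis strips at least one $D$-symbol. There is no genuine mathematical difficulty here; the lemma is essentially a sanity check that the syntactic definitions of the previous two definitions cohere, and the proof is the expected routine structural induction, which is presumably why the authors may state it without (or with only a cursory) proof.
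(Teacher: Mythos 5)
Your proof is correct, and it is the standard structural induction that the paper itself delegates with the one-line remark ``Similar as Lemma 2.1 in Buchholz'' rather than writing out. You also rightly flag the index typo in clause (2) of the definition; your corrected reading ($j<k$, or $j=k$ and $\alpha'<\beta'$) is the intended one.
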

\begin{proof}
Similar as Lemma 2.1 in \cite{BuchholzAPAL1986}.
\end{proof}

\begin{definition}
For $\alpha \in \pi(\omega)', \pi(n)'$, define $G_i(\alpha)$ as follows.
\begin{enumerate}
\item $G_i(0) = \emptyset$,
\item $G_i(D_j \alpha') :=
\begin{cases}
 G_i(\alpha') \cup \{ \alpha' \} & \mbox{if } i \leq j,\\
 \emptyset & \mbox{if } i>j.
\end{cases}
$
\end{enumerate}
\end{definition}

Now, we are ready to define to ordinal notation systems $\pi(\omega) \subseteq \pi(\omega)'$ and $\pi(n) \subseteq \pi(n)'$.

\begin{definition} $\pi(\omega)$ and $\pi(n)$ are the least sets such that
\begin{enumerate}
\item $0 \in \pi(\omega)$, $0 \in \pi(n)$,
\item if $\alpha \in \pi(\omega)$ and $G_i(\alpha) < \alpha$, then $D_i \alpha \in \pi(\omega)$,
\item if $\alpha \in \pi(n)$, $i<n$ and $G_i(\alpha) < \alpha$, then $D_i \alpha \in \pi(n)$.
\end{enumerate}

\end{definition}

Apparently, the $D_j \alpha$'s correspond to the ordinal functions $\pi_j \alpha$:

\begin{definition} For $\alpha \in \pi(\omega)$ and $\pi(n)$, define
\begin{enumerate}
\item $o(0) := 0$,
\item $o(D_j \alpha') := \pi_j(o(\alpha'))$.
\end{enumerate}
\end{definition}

\begin{lemma} For $\alpha, \beta \in \pi(\omega)$ or $\pi(n)$, we have:
\begin{enumerate}
\item $o(\alpha) \in B_0(\Omega_\omega)$,
\item $G_i(o(\alpha)) = \{ o(x) : x \in G_i(\alpha)\}$,
\item $\alpha < \beta \rightarrow o(\alpha) < o(\beta)$.
\end{enumerate}
\end{lemma}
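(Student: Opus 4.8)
The plan is to prove the three clauses simultaneously by induction on the length of construction of the term $\alpha \in \pi(\omega)$ (respectively $\pi(n)$), since each clause about $D_j\alpha'$ will reduce to the corresponding clause about the shorter term $\alpha'$ together with the defining side-condition $G_i(\alpha') < \alpha'$ that $\alpha'$ must satisfy for $D_j\alpha'$ to lie in the notation system. The base case $\alpha = 0$ is immediate: $o(0)=0 \in B_0(\Omega_\omega)$, $G_i(0)=\emptyset$, and there is nothing to check for (3).

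For the inductive step, write $\alpha = D_j\alpha'$ with $\alpha' \in \pi(\omega)$ (or $\pi(n)$, $j<n$) and $G_i(\alpha') < \alpha'$ for all $i$ (in the $\pi(\omega)$ case, for $i \leq j$; one has to be a little careful about which $i$'s the side condition actually quantifies over, reading it off the definition of $D_i\alpha \in \pi(\omega)$). For clause (1): by the induction hypothesis $o(\alpha') \in B_0(\Omega_\omega)$, and $o(\alpha) = \pi_j(o(\alpha'))$; I would invoke the earlier remark that $\pi_j\delta \in B_0(\Omega_\omega)$ provided $\delta \in B_0(\Omega_\omega)$ — but this needs $\delta \in B_j(\delta)$, so I first need to establish $o(\alpha') \in B_j(o(\alpha'))$. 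That is exactly where clause (2) and the Lemma relating $G_i$ and $B_i$ (the one stating, for $\alpha \in B_0(\Omega_\omega)$, that $G_i(\alpha)<\beta$ iff $\alpha \in B_i(\beta)$) come in: from $G_j(\alpha') < \alpha'$ (the side-condition, pushed through clause (2) and the IH to translate syntactic $G_j$ into ordinal $G_j$) and that Lemma with $\beta = \alpha'$'s ordinal value, I get $o(\alpha') \in B_j(o(\alpha'))$, hence $o(\alpha) = \pi_j(o(\alpha')) \in B_0(\Omega_\omega)$.

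For clause (2): unfold $G_i(D_j\alpha')$ by its syntactic definition — it is $\emptyset$ if $i>j$ and $G_i(\alpha')\cup\{\alpha'\}$ if $i\leq j$ — and unfold $G_i(o(D_j\alpha')) = G_i(\pi_j(o(\alpha')))$ by its ordinal definition — it is $\emptyset$ if $j<i$ and $G_i(o(\alpha'))\cup\{o(\alpha')\}$ if $j\geq i$. These match case-for-case, and in the $i\leq j$ case the induction hypothesis gives $G_i(o(\alpha')) = \{o(x) : x \in G_i(\alpha')\}$, so adjoining $o(\alpha')$ on the left equals adjoining $\alpha'$ inside the set-builder on the right. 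For clause (3): suppose $\alpha < \beta$ in $\pi(\omega)$ (or $\pi(n)$). Both are nonzero (if $\alpha=0$ then $o(\alpha)=0 < o(\beta)$ since $\beta\neq 0$ forces $o(\beta) = \pi_k(\ldots) \geq \Omega_k > 0$), so write $\alpha = D_j\alpha'$, $\beta = D_k\beta'$; by the definition of $<$ on terms either $j<k$, or $j=k$ and $\alpha'<\beta'$. In the second case the IH gives $o(\alpha')<o(\beta')$, and combined with $o(\alpha') \in B_j(o(\alpha'))$ (from clause (1)'s argument) part (4) of the earlier Lemma on the $\pi_i$ functions yields $\pi_j(o(\alpha')) < \pi_j(o(\beta'))$. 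In the first case, $o(\alpha) = \pi_j(o(\alpha')) < \Omega_{j+1} \leq \Omega_k \leq \pi_k(o(\beta')) = o(\beta)$ by part (2) of that Lemma.

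The main obstacle, and the place demanding the most care, is the bookkeeping around clause (1) in the inductive step: one must thread the \emph{syntactic} side-condition $G_j(\alpha')<\alpha'$ through clause (2) and the induction hypothesis to obtain the \emph{ordinal} statement $G_j(o(\alpha'))<o(\alpha')$, then feed that into the $G$–$B$ Lemma to get $o(\alpha') \in B_j(o(\alpha'))$, and only then apply the closure fact for $\pi_j$ — so the three clauses are genuinely entangled and the induction must carry all three together, in the right order, rather than being provable separately. A secondary subtlety is making sure the quantifier range of $i$ in the side-condition defining membership in $\pi(\omega)$ lines up with what clause (2)'s case split needs; this is routine once spelled out but is exactly the "small error/typo" territory the authors flagged elsewhere, so I would state it explicitly.
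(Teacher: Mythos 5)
Your proof is correct and is essentially the argument the paper delegates to Buchholz's 1986 paper via its one-line citation: a simultaneous induction in which the syntactic side-condition $G_j(\alpha')<\alpha'$ is converted through clauses (2) and (3) and the $G$--$B$ lemma into $o(\alpha')\in B_j(o(\alpha'))$, which is exactly what licenses both the closure step in clause (1) and the appeal to the monotonicity property $\alpha\in B_i(\alpha)\;\&\;\alpha<\beta\Rightarrow\pi_i\alpha<\pi_i\beta$ in clause (3). One immaterial slip: the defining condition for $D_j\alpha'\in\pi(\omega)$ is $G_j(\alpha')<\alpha'$ for that single index $j$, which propagates to indices $i\geq j$ (since $G_i(\alpha')\subseteq G_j(\alpha')$ there), not to $i\leq j$ as your parenthetical suggests --- but $i=j$ is the only index your argument actually uses, so nothing breaks.
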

\begin{proof}
A similar proof can be found in \cite{BuchholzAPAL1986}.
\end{proof}

\begin{lemma}
\begin{enumerate}[itemsep=0ex]
\item $\{o(x) : x \in \pi(\omega )\} = B_0(\Omega_\omega)$,
\item $\{o(x) : x \in \pi(\omega)$ and $x < D_10\} = \pi_0 \Omega_\omega$,
\item $\{o(x) : x \in \pi(n)$ and $x < D_10\} =  \pi_0\Omega_n$ if $n>0$.
\end{enumerate}
\end{lemma}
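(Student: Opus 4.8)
The plan is to prove the three equalities by relating the term systems $\pi(\omega)$ and $\pi(n)$ to the sets $B_0(\Omega_\omega)$ and $B_i(\cdot)$ via the evaluation map $o(\cdot)$, using the preceding lemmas. For item (1), I would show $\{o(x):x\in\pi(\omega)\}\subseteq B_0(\Omega_\omega)$ directly from the lemma stating $o(\alpha)\in B_0(\Omega_\omega)$, and then prove the reverse inclusion by induction on the length of construction of an ordinal $\gamma\in B_0(\Omega_\omega)$: every such $\gamma$ is either $0$ (take the term $0$) or of the form $\pi_j\delta$ with $\delta<\Omega_\omega$, $\delta\in B_j(\delta)$, $\delta\in B_0(\Omega_\omega)$; by the induction hypothesis $\delta=o(\alpha')$ for some $\alpha'\in\pi(\omega)$, and then I must check that $D_j\alpha'$ is a legal term, i.e.\ $G_j(\alpha')<\alpha'$. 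This is where the lemma $G_i(o(\alpha))=\{o(x):x\in G_i(\alpha)\}$ together with the lemma ``$G_i(\alpha)<\beta$ iff $\alpha\in B_i(\beta)$'' comes in: $\delta\in B_j(\delta)$ translates to $G_j(o(\alpha'))<o(\alpha')$, hence $G_j(\alpha')<\alpha'$ since $o$ is order-preserving and injective on terms (the latter from $\alpha<\beta\to o(\alpha)<o(\beta)$ applied in both directions). So $D_j\alpha'\in\pi(\omega)$ and $o(D_j\alpha')=\pi_j\delta=\gamma$.

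For items (2) and (3), the strategy is to intersect the result of (1) with the initial segment below $D_1 0$. Since $o$ is order-preserving on terms and $o(D_1 0)=\pi_1 0=\Omega_1$, the set $\{o(x):x\in\pi(\omega),\,x<D_1 0\}$ is contained in $\Omega_1$. Combined with (1), this gives $\{o(x):x\in\pi(\omega),\,x<D_1 0\}=B_0(\Omega_\omega)\cap\Omega_1$. So it remains to identify $B_0(\Omega_\omega)\cap\Omega_1$ with $\pi_0\Omega_\omega$. By definition $\pi_0\Omega_\omega=\min\{\eta:\eta\notin B_0(\Omega_\omega)\}$ and $\Omega_0\le\pi_0\Omega_\omega<\Omega_1$, so $\pi_0\Omega_\omega$ is an ordinal below $\Omega_1$; every ordinal below it lies in $B_0(\Omega_\omega)$ by minimality, and conversely an element of $B_0(\Omega_\omega)$ that is $<\Omega_1$ must be $<\pi_0\Omega_\omega$ — this needs the standard fact that $B_0(\Omega_\omega)\cap\Omega_1$ is exactly the ordinal $\pi_0\Omega_\omega$, i.e.\ that $B_0(\Omega_\omega)$ is ``closed downward below its first gap'' in that interval, which follows from the construction of the $B$-sets (this is the kind of fact cited from \cite{BuchholzAPAL1986}). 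For (3) the same argument works with $\pi(n)$ in place of $\pi(\omega)$: one checks the analogue of (1), namely $\{o(x):x\in\pi(n)\}=B_0^{(n)}(\Omega_n):=$ the set generated using only $\pi_0,\dots,\pi_{n-1}$, and notes that below $\Omega_1$ this set coincides with $B_0(\Omega_n)\cap\Omega_1=\pi_0\Omega_n$; the hypothesis $n>0$ is needed so that $D_1 0$ — wait, actually $D_1 0$ requires $1<n$, so strictly one should instead cut below the appropriate term, or observe that for $n>0$ the relevant bound still works because $o$-values of terms in $\pi(n)$ below $\Omega_1$ only involve $D_0$.

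The main obstacle I expect is the reverse-inclusion induction in item (1): one has to be careful that the side condition $G_j(\alpha')<\alpha'$ needed to form the term $D_j\alpha'$ is correctly matched with the condition $\delta\in B_j(\delta)$ that comes for free from $\pi_j\delta\in B_0(\Omega_\omega)$, and this hinges on using the two $G$-lemmas in tandem plus injectivity of $o$ on the term systems. A secondary subtlety is the precise identification $B_0(\Omega_\omega)\cap\Omega_1=\pi_0\Omega_\omega$ (and its $\pi(n)$-analogue), which is a standard but not entirely trivial property of Buchholz-style collapsing hierarchies; I would either cite \cite{BuchholzAPAL1986} for it or prove it by a short induction showing $\eta<\pi_0\Omega_\omega\Rightarrow\eta\in B_0(\Omega_\omega)$ and $\eta\in B_0(\Omega_\omega)\cap\Omega_1\Rightarrow\eta<\pi_0\Omega_\omega$. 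Everything else — order-preservation, injectivity, the behaviour of $o$ on $0$ and $D_j$ — is already packaged in the lemmas quoted just above the statement, so the write-up should be short.
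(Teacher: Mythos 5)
Your proposal is essentially correct, but note that the paper does not actually prove this lemma: its entire ``proof'' is the remark that a similar proof can be found in Buchholz's 1986 paper. So what you have written is a reconstruction of the standard argument that the authors delegate to the literature, and as a reconstruction it is sound: the forward inclusion of (1) is exactly the earlier lemma $o(\alpha)\in B_0(\Omega_\omega)$, and your reverse inclusion correctly identifies the crux, namely that the generation clause for $B_0(\Omega_\omega)$ hands you $\delta<\Omega_\omega$, $\delta\in B_j(\delta)$, $\delta\in B_0(\Omega_\omega)$, and that $\delta\in B_j(\delta)$ is converted into the syntactic side condition $G_j(\alpha')<\alpha'$ via the two $G$-lemmas together with linearity of $<$ on terms and order-preservation of $o$. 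Two small points where you could tighten the write-up. First, the identification $B_0(\Omega_\omega)\cap\Omega_1=\pi_0\Omega_\omega$ does not need to be imported as a separate ``standard fact'': any element of $B_0(\Omega_\omega)\cap\Omega_1$ other than $0$ must be of the form $\pi_0\beta$ with $\beta<\Omega_\omega$ and $\beta\in B_0(\beta)$ (indices $j\geq 1$ give values $\geq\Omega_1$), and then item 4 of the basic lemma on the $\pi_i$ gives $\pi_0\beta<\pi_0\Omega_\omega$; the other inclusion is the minimality in the definition of $\pi_0\Omega_\omega$ together with $\pi_0\Omega_\omega<\Omega_1$. Second, your hesitation about $D_10$ in item (3) is resolved by observing that for $n=1$ the condition $x<D_10$ is vacuous (every nonzero term of $\pi(1)$ begins with $D_0$ and is therefore automatically below $D_10$ in the order on $\pi(\omega)'$), while for $n\geq 2$ the term $D_10$ lies in $\pi(n)$ and the argument is the same as for (2), with $B_0(\Omega_n)\cap\Omega_1=\pi_0\Omega_n$ obtained by the same appeal to monotonicity.
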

\begin{proof}
A similar proof can be found in \cite{BuchholzAPAL1986}.
\end{proof}

\medskip

Define $\pi(\omega) \cap D_10$ as $\pi_0(\omega)$ and $\pi(n) \cap D_10$ as $\pi_0(n)$. 
It is very important to see that we work with two different contexts: one context is at the level of ordinals, i.e. if we use the $\pi_i$'s. The other context at the syntactical level, i.e. if we use the $D_i$'s (because it is an ordinal notation system). 
The previous results actually indicate that $D_i$ and $\pi_i$ play the same role and for notational convenience, we will identify these two notations: from now on, we write $\pi_i$ instead of $D_i$. The context will make clear what we mean. 
If we use $\Omega_i$ in the ordinal context, it is interpreted as in Definition \ref{def:Omega_i}. In the other context, at the level of ordinal notation systems, we define $\Omega_i$ as $D_i0$ (which is now also denoted by $\pi_i0$). 

\medskip

We could also have defined $\pi(\omega)$ in the following equivalent way.

\begin{definition} Define $\pi(\omega)$ as the least set of ordinals such that
\begin{enumerate}[itemsep=0ex]
\item $0 \in \pi(\omega)$,
\item If $\al \in \pi(\omega)$ and $\al \in B_i(\al)$, then $\pi_i\al \in \pi(\omega)$.
\end{enumerate}
Define $\pi(n)$ in the same manner, but with the restriction that $i<n$.
\end{definition}

In \cite{schuttesimpson}, the following theorem is shown. Therefore, $\pi_0(n)$ is an ordinal notation system for $\omega_n[1]$ if $n>0$ and $\pi_0(\omega)$ is a system for $\varepsilon_0$.

\begin{theorem}\label{ordertypepi}
\begin{enumerate}[itemsep=0ex]
\item $\pi_0 \Omega_n = \omega_n[1]$ if $n>0$,
\item $\pi_0 \Omega_\omega = \varepsilon_0$.
\end{enumerate}
\end{theorem}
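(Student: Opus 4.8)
The plan is to compute the two values by induction, using the characterization of $\pi(\omega)$ and $\pi(n)$ in terms of the Buchholz-style sets $B_i(\alpha)$ together with the order-type results for the Veblen-style systems already recorded (Theorem \ref{veblenhierarchy}). The key observation is that $\pi_0(n)$ (respectively $\pi_0(\omega)$) is exactly the set of notations below $\Omega_1 = \pi_1 0$, and that by the previous lemma its order type is $\pi_0\Omega_n$ (respectively $\pi_0\Omega_\omega$); so it suffices to identify these ordinals with $\omega_n[1]$ and $\varepsilon_0$.

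First I would handle part (1) by induction on $n>0$. For the base case $n=1$, I would unwind the definition of $B_0(\Omega_1)$: since the only collapsing function available below $\Omega_1$ with index $\geq 0$ that can be applied is $\pi_0$ itself, and $\pi_0\beta$ for $\beta<\Omega_1$ just enumerates $\omega^\beta$-style values, one checks directly that $\pi_0\Omega_1 = \omega^{\Omega_0} \cdot \text{(something)}$ collapses to $\omega^\omega$; more cleanly, $\pi(1)$ restricted below $\Omega_1$ is order-isomorphic to the Veblen system $\varphi_T 0$ with $T$ a one-point order, which by Theorem \ref{veblenhierarchy} has order type $\omega^{\omega^{-1+1}} = \omega^\omega = \omega_1[1]$. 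For the inductive step, I would show $\pi_0\Omega_{n+1}$ relates to $\pi_0\Omega_n$ exactly as $\omega_{n+1}[1] = \omega^{\omega_n[1]}$ relates to $\omega_n[1]$: the point is that inside $B_0(\Omega_{n+1})$ the ordinals below $\Omega_n$ form (the image of) $\pi(n)$, whose sup below $\Omega_1$ is by induction $\omega_n[1]$, and collapsing via $\pi_0$ turns the regular ordinal $\Omega_n$-sized block of notations into $\omega^{(\text{that sup})}$. One can instead phrase this as a quasi-embedding back and forth between $\pi_0(n+1)$ and a Veblen-type system $\varphi_T 0$ with $otype(T) = \omega_n[1]$, and then invoke Theorem \ref{veblenhierarchy} to get $\omega^{\omega^{-1+\omega_n[1]}} = \omega^{\omega_n[1]}$ since $\omega_n[1]$ is a limit for $n\geq 1$, hence $-1 + \omega_n[1] = \omega_n[1]$ and $\omega^{\omega_n[1]}$ is not quite right — here I would be careful and match the tower precisely, using that $\omega^{\omega^{\omega_{n-1}[1]}} = \omega^{\omega_n[1]} = \omega_{n+1}[1]$.

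For part (2), $\pi_0\Omega_\omega = \sup_n \pi_0\Omega_n$ because $\Omega_\omega = \sup_n\Omega_n$ and the sets $B_0(\Omega_n)$ are increasing and exhaust $B_0(\Omega_\omega)$ below $\Omega_1$; hence $\pi_0\Omega_\omega = \sup_n \omega_n[1] = \varepsilon_0$ by part (1). I would make the "exhaust" claim precise by noting every element of $\pi(\omega)$ below $D_1 0$ mentions only finitely many indices $D_j$, all of which must be $0$ below $\Omega_1$ after collapsing, so it already lies in some $\pi(n)$.

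The main obstacle I expect is the bookkeeping in the inductive step of part (1): one has to verify carefully that the "honest" notations (those satisfying $G_i(\alpha)<\alpha$) below $\Omega_1$ in the $(n{+}1)$-system are in an order-preserving bijection with a Veblen system over a base order of type $\omega_n[1]$ — equivalently, that the recursion $\pi_0\Omega_{n+1} = \omega^{\pi_0\Omega_n}$ holds at the level of these collapses. This requires showing both that $\pi_0\delta$ for $\delta<\Omega_{n+1}$ with the right side-conditions ranges cofinally through $\omega^{\pi_0\Omega_n}$ and that no collisions occur, which is exactly the content of the injectivity/monotonicity clauses of the preceding lemmas; once those are in hand the ordinal arithmetic $\omega^{\omega_n[1]} = \omega_{n+1}[1]$ is immediate. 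I would cite \cite{schuttesimpson} and \cite{BuchholzAPAL1986} for the detailed verification rather than reproducing it, since the statement is explicitly attributed there.
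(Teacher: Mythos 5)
The paper does not actually prove this theorem: it is stated as a result of Sch\"utte and Simpson and simply cited from \cite{schuttesimpson}, so there is no internal proof to compare your sketch against. Your overall plan --- induction on $n$ via a recursion expressing $\pi_0\Omega_{n+1}$ in terms of $\pi_0\Omega_n$, plus a supremum argument for $\Omega_\omega$ --- is the natural one, and your reduction of part (2) to part (1) is fine once one checks that every term of $\pi_0(\omega)$ below $D_10$ involves only finitely many indices and hence already lies in some $\pi_0(n)$.

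However, as written the sketch contains concrete slips that must be repaired before it counts as a proof. In the base case you compute $\omega^{\omega^{-1+1}}=\omega^{\omega}$ and identify this with $\omega_1[1]$; in fact $\omega^{\omega^{-1+1}}=\omega^{\omega^{0}}=\omega$ and $\omega_1[1]=\omega^{\omega_0[1]}=\omega^1=\omega$, so the correct base case is $\pi_0\Omega_1=\omega$ (the countable part of $B_0(\Omega_1)$ is just $\{0,\pi_00,\pi_0\pi_00,\dots\}$). In the inductive step, matching $\pi_0(n+1)$ with a Veblen system $\varphi_T0$ over a base $T$ of order type $\omega_n[1]$ overshoots by one exponential: Theorem \ref{veblenhierarchy} would then give $\omega^{\omega^{-1+\omega_n[1]}}=\omega^{\omega^{\omega_n[1]}}=\omega_{n+2}[1]$ rather than the required $\omega_{n+1}[1]$; the base order must have type $\omega_{n-1}[1]$, i.e.\ $T\cong\pi_0(n-1)$, which is exactly the calibration the paper uses later when it derives $otype(\varphi_{\pi_0(n)}0)=\omega_{n+2}$ from this theorem. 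You notice this mismatch yourself but leave it unresolved, and the one genuinely substantive step --- that the addition-free collapse satisfies $\pi_0\Omega_{n+1}=\omega^{\pi_0\Omega_n}$ --- is in the end only asserted and deferred to \cite{schuttesimpson}. That deferral is defensible (it is no more than the paper itself does), but then the intermediate Veblen bookkeeping should at least be stated with the correct indices.
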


\subsubsection{Using the $\vartheta_i$-collapsing functions}

In this subsection, we give an ordinal representation system that is based on the $\vartheta_i$-functions. For more information about this system that includes the addition-operator, see \cite{PohlersinLeeds,rathjenweiermann}. In this subsection, we introduce them without the addition-operator.


\begin{definition} Define $T$ and the function $S$ simultaneously as follows. $T$ is the least set such that $0 \in T$, where $S(0):=-1$ and if $\alpha \in T$ with $S(\alpha)\leq i+1$, then $\vartheta_i\alpha\in T$ and $S(\vartheta_i \alpha):= i$. We call the number of occurrences of symbols $\vartheta_j$ in $\alpha \in T$, the \textbf{length} of $\alpha$ and denote this by $lh(\al)$. Furthermore, let $\Omega_i:= \vt_i 0$.
\end{definition}

Like in the $D_i$-case, $\Omega_i$ is defined as something syntactically because $T$ is an ordinal notation system. However, the usual interpretation of $\Omega_i$ in the context of ordinals is as in Definition \ref{def:Omega_i}. $S(\alpha)$ represents the index $i$ of the first occurring $\vartheta_i$ in $\alpha$, if $\alpha \neq 0$.

\begin{definition} Let $n<\omega$. Define $T_n$ as the set of elements $\alpha$ in $T$ such that for all $\vartheta_j$ in $\alpha$, we have $j<n$. Let $T[m]$ be the set of elements $\alpha$ in $T$ such that $S(\alpha) \leq m$. Define $T_n[m]$ accordingly.
%
\end{definition}

For example $T_1 = T_1[0] = \{0,\vartheta_00, \vartheta_0\vartheta_00,\dots\}$.  For every element $\alpha$ in $T$, we define its \textit{coefficients}. The definition is based on the usual definition of the coefficients in a notation system \textit{with} addition.

\begin{definition} Let $\alpha \in T$. If $\alpha=0$, then $k_i(0):=0$. Assume $\alpha = \vartheta_j(\beta)$. Let $k_i(\alpha)$ then be
\[\begin{cases}
\vartheta_j(\beta) = \al & \text{if $j\leq i$,}\\
k_i(\beta) & \text{if $j>i$.}
\end{cases}\]
\end{definition}

Using this definition, we introduce a well-order on $T$ (and its substructures).
This ordering is based on the usual ordering between the $\vartheta_i$-functions defined with addition.

\begin{definition}
\begin{enumerate}[itemsep=0ex]
\item If $\al\not=0$, then $0<\al$,
\item if $i<j$, then $\vt_i\al<\vt_j\be$,
\item if $\al<\be$ and $k_i\al<\vt_i\be$, then $\vt_i\al<\vt_i\be$,
\item if $\al>\be$ and $\vt_i\al\leq k_i\be$, then $\vt_i\al<\vt_i\be$.
\end{enumerate}
\end{definition}

\begin{definition}
If $\alpha, \beta \in T$ and $\beta < \Omega_1$, let $\alpha[\beta]$ be the element in $T$ where the last zero in $\alpha$ is replaced by $\beta$. 
\end{definition}

The following lemma gives some useful properties of this ordinal notation system.

\begin{lemma}\label{propertiesthetafunction} For all $\alpha,\be$ and $\ga$ in $T$ and for all $i<\omega$,
\begin{enumerate}[itemsep=0ex]
\item $ k_i(\al) \leq \al$,
\item if $\alpha = \vartheta_{j_1} \dots \vartheta_{j_n} t $ with $j_1,\dots, j_n \geq i$ and ($t = 0$ or $t =\vt_k t'$ with $k\leq i$), then $t < \vartheta_i(\alpha)$,
\item $k_i(\al) < \vt_i\al$,
\item $k_i(\al)[\gamma] = k_i(\al[\gamma])$ for $\gamma < \Omega_1$,
\item if $\ga < \Omega_1$, then $\ga \leq \be[\ga]$ and there is only equality if $\be =0$,
\item if $\al<\be$ and $\ga<\Om_1$, then $\al[\ga]<\be[\ga]$.
\end{enumerate}
\end{lemma}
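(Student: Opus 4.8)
The plan is to prove the six items of Lemma~\ref{propertiesthetafunction} essentially in the order listed, since each relies on the previous ones, all by induction on $lh(\al)$ (or on $lh(\al)+lh(\be)$ for the binary statements) together with the definition of the ordering on $T$. For item~1, if $\al=0$ or $\al=\vt_j\be$ with $j\le i$ then $k_i(\al)=\al$ and we are done; if $\al=\vt_j\be$ with $j>i$ then $k_i(\al)=k_i(\be)\le\be$ by the induction hypothesis, and $\be<\vt_j\be=\al$ by item~2 applied in the degenerate case (or directly, since $S(\be)\le j+1$ forces $\be<\vt_j\be$), giving $k_i(\al)<\al$. For item~2, I would argue by induction on $n$: the base case $n=0$ says $t<\vt_i(t)$ which follows because $S(t)\le i$ means $k_i(t)=t$, and then clause~3 of the order definition (with $\al=\be=t$, noting $k_i(t)=t<\vt_i(t)$ is what we want — so one actually uses clause~3 with a strictly smaller argument, i.e. compare $t$ with $\vt_i(t)$ using clause~1 if $t=0$ or clause~3 if $t\ne0$). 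The inductive step peels off one $\vt_{j_1}$ with $j_1\ge i$ and uses transitivity together with monotonicity of $\vt_i$.

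For item~3, $k_i(\al)<\vt_i\al$: if $S(\al)\le i$ (so $k_i(\al)=\al$) this is the content of item~2 in the base case; if $\al=\vt_j\be$ with $j>i$ then $k_i(\al)=k_i(\be)<\vt_i\be\le\vt_i\al$ by the induction hypothesis and monotonicity — but I must be slightly careful that $\vt_i$ is defined on $\be$, which needs $S(\be)\le i+1$; if that fails then $\be$ itself is of the form $\vt_k\be'$ with $k\ge i+2>i$, and I iterate. Items~4 and~6 are the substitution-compatibility statements: item~4, $k_i(\al)[\ga]=k_i(\al[\ga])$, follows by unwinding the recursive definitions of $k_i$ and of $\cdot[\ga]$ in parallel — the ``last zero'' in $\al$ lies inside the $k_i$-part exactly when $S(\al)\le i$ or we have descended past the high-index $\vt_j$'s, and the two cases of the definition of $k_i$ match up. Item~6, $\al<\be\imp\al[\ga]<\be[\ga]$, is proved by induction on $lh(\al)+lh(\be)$ going through the four clauses of the order definition: clause~2 (different head indices) is immediate since substitution does not change head indices; clauses~3 and~4 use the induction hypothesis on the subterms together with item~4 to handle the $k_i$ comparisons, plus item~5 where a non-strict inequality must be upgraded or preserved.

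Item~5, $\ga\le\be[\ga]$ with equality only if $\be=0$, I would actually prove before item~6 (or interleaved), again by induction on $lh(\be)$: if $\be=0$ then $\be[\ga]=\ga$; if $\be=\vt_j\be'$ then $\be[\ga]=\vt_j(\be'[\ga])$, and since $\ga<\Om_1=\vt_00$ while $\vt_j(\anything)\ge\Om_j\ge\Om_0>\ga$ when $j\ge1$, and for $j=0$ we get $\vt_0(\be'[\ga])\ge\vt_0(\ga')$ for suitable $\ga'$ — here I use that $\vt_0$ applied to anything is $\ge\vt_00=\Om_1>\ga$ unless the argument forces it down, but by item~2 the argument $\be'[\ga]$ satisfies $\ga\le\be'[\ga]<\vt_0(\be'[\ga])$ giving strict inequality. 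The main obstacle I anticipate is item~6 (monotonicity of substitution), because the order relation on $T$ is defined by a genuinely intertwined recursion involving $k_i$, so one cannot treat the subterm comparisons in isolation: the bookkeeping of which clause applies before and after substitution, and ensuring the induction hypothesis is invoked on syntactically smaller pairs while the $k_i$-terms are controlled via item~4, is the delicate part. Everything else is a routine unfolding of definitions, so I would state items~1--5 with brief justifications and spend the bulk of the write-up on the clause-by-clause verification of item~6.
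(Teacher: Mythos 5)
Your overall architecture (prove the items in the listed order, induct on term lengths, and track the clauses of the order definition) matches the paper's, and your treatments of items 1, 3, 4 and 6 are essentially the intended ones. But there are genuine gaps in items 2 and 5, and they sit exactly where you declare things ``routine''.

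For item 2 you induct on $n$ and close the inductive step with ``transitivity together with monotonicity of $\vt_i$''. Monotonicity of $\vt_i$ is not available as a black box here: to get $\vt_i\als<\vt_i\bes$ from $\als<\bes$ you must also verify the side condition $k_i(\als)<\vt_i\bes$ from clause 3 of the order definition, and supplying that side condition is precisely the point of items 2--3, so the step as sketched is circular. Worse, when $t=\vt_i t'$ the proper subterm $t'$ of $\al$ may well satisfy $t'>\al$ in the ordering; then the comparison must go through clause 4, which requires the auxiliary claim $\vt_i t'\le k_i(\al)$. The paper's proof runs a main induction on $lh(\al)$ with a sub-induction on $lh(t)$, writes $t=\vt_i\vt_{l_1}\cdots\vt_{l_m}k_i(t')$, and splits on $t'<\al$ versus $t'>\al$, proving $t\le k_i(\al)$ in the second case; none of this machinery is visible in your outline, and without it the argument does not close.

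For item 5 you induct only on $lh(\be)$ and argue $\ga\le\be'[\ga]<\vt_0(\be'[\ga])$. The second inequality is false in general: the hypothesis on $\be'$ is only $S(\be')\le 1$, so $\be'[\ga]$ may begin with $\vt_1$, and then $\vt_0(\be'[\ga])<\be'[\ga]$ by the clause ``$i<j$ implies $\vt_i\al<\vt_j\be$''. (You also write $\Om_1=\vt_00$, whereas the paper sets $\Omega_i=\vt_i0$, so $\ga<\Om_1$ only says $S(\ga)\le 0$; it does not make $\vt_0$ of anything exceed $\ga$.) The correct bound goes through $k_0(\be'[\ga])$ rather than $\be'[\ga]$ itself, and the case $\ga'<\be'[\ga]$ needs a main induction on $lh(\ga)$ applied to $k_0(\ga')$ via the auxiliary term $\ov{\be}=\be[\vt_0\vt_{j_1}\cdots\vt_{j_k}0]$ -- an induction on $lh(\be)$ alone does not reach it. By contrast, item 6, which you earmark as the delicate part, is comparatively mechanical once items 4 and 5 are in place; the proposal's emphasis is inverted.
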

\begin{proof}
\begin{enumerate}
\item The first assertion is easy to see.
\item By induction on $lh(\al)$ and sub-induction on $lh(t)$. If $\al = 0$, then the claim is trivial. Assume from now on $\al >0$. If $t=0$ or $t = \vt_k t'$ with $k<i$, then this is trivial. Assume $t= \vt_i t'$. Then $t= \vt_i \vt_{l_1} \ldots \vt_{l_m} k_i(t')$ with $l_1,\dots, l_m>i$. The sub-induction hypothesis, $lh(k_i(t')) < lh(t)$ and $\alpha =  \vartheta_{j_1} \dots \vartheta_{j_n} \vt_i \vt_{l_1} \ldots \vt_{l_m} k_i(t')$ yield $k_i(t') < \vt_i \al$. If $t' < \alpha$, then $t = \vt_i t'< \vt_i \al$. Assume $t' > \alpha$. Note that equality is impossible because $t'$ is a strict subterm of $\al$. 
We claim that $t = \vt_i t' \leq k_i(\al)$, hence we are done. We know that $k_i(\al) = \vartheta_{j_p} \ldots \vt_{j_n} \vt_i t'$ for a certain $p$ with $j_p=i$ or $k_i(\alpha) = \vt_i t'$. In the latter case, the claim is trivial. In the former case, the main induction hypothesis on $ \vartheta_{j_{p+1}} \ldots \vt_{j_n} \vt_i t'  $ yields $t < \vt_i \vartheta_{j_{p+1}} \ldots \vt_{j_n} \vt_i t' = k_i(\al) $.
\item This follows easily from the second assertion because $\alpha = \vt_{j_1} \ldots \vt_{j_n} k_i(\al)$ with $j_1,\dots,j_n>i$.
\item Follows easily by induction on $lh(\al)$.
\item By induction on $lh(\ga)$ and sub-induction on $lh(\be)$. If $\ga=0$, the statement is trivial to see. From now on, let $\ga = \vt_0 \ga'$. If $\be = 0$ or $\be = \vt_i \be'$ with $i>0$, the statement also easily follows. Assume $\be = \vt_0 \be'$. We see $\be[\ga] = \vt_0(\be'[\ga])$.
Suppose $\ga' < \be'[\ga]$. Assume $\ga' = \vt_{j_1} \dots \vt_{j_k} k_0(\ga')$ with $j_1,\dots,j_k >0$ and define $\overline{\be}$ as $\be[\vt_0\vt_{j_1} \dots \vt_{j_k}0]$. The main induction hypothesis yields $k_0(\ga') \leq \overline{\be}[k_0(\ga')] = \be[\ga] = \vt_0(\be'[\ga])$. Note that equality is not possible because $k_0(\ga')$ is a strict subterm of $\overline{\be}[k_0(\ga')]$, hence $\ga = \vt_0\ga' < \vt_0(\be'[\ga]) = \be[\ga]$.
Assume $\ga' > \be'[\ga]$. The sub-induction hypothesis yields $\ga \leq k_0(\be')[\ga]  \stackrel{\gamma < \Omega_1}{=}k_0(\be'[\ga])$.
Hence, $\ga \leq k_0(\be'[\ga]) < \vt_0(\be'[\ga]) = \be[\ga]$.
\item By induction on $lh(\al)+ lh(\be)$. If $\alpha = 0$ and $\beta\neq 0$, then the previous assertion yields $\al[\ga]= \ga < \be[\ga]$.
 Assume $\al = \vt_i \al' < \vt_j\be' = \be$. If $i<j$, then also $\al[\ga]<\be[\ga]$. Suppose $i=j$. Then either $\al'<\be'$ and $k_i(\al') < \vt_j\be'$, or $\al \leq k_j(\be')$. In the former case, the induction hypothesis yields $\al'[\ga]<\be'[\ga]$ and $k_i(\al'[\ga]) \stackrel{\ga < \Omega_1}{=} k_i(\al')[\ga] <  (\vt_j\be')[\ga] = \vt_j(\be'[\ga])$. Hence, $\al[\ga] = (\vt_i \al')[\ga] = \vt_i(\al'[\ga]) < \vt_j(\be'[\ga]) = (\vt_j\be')[\ga] = \be[\ga]$.
In the latter case, the induction hypothesis yields $\al[\ga] \leq k_j(\be')[\ga] \stackrel{\ga < \Omega_1}{=} k_j(\be'[\ga]) < \vartheta_j(\be'[\ga]) = (\vt_j\be')[\ga] = \be[\ga]$.
\end{enumerate}
\end{proof}

\medskip

On $T$ and its substructures, we define the following partial order $\unlhd$, which can be seen as a natural sub-order of the ordering $<$ on $T$ (see Lemma \ref{lemma:unlhdsubordering<}). 

\begin{definition}
\begin{enumerate}[itemsep=0ex]
\item $0\unlhd \al$,
\item if $\al\unlhd k_i\be$, then $\al \unlhd \vt_i\be$,
\item if $\al\unlhd\be$, then $\vt_i\al\unlhd \vt_i\be$.
\end{enumerate}
\end{definition}

Apparently, $T_n$ with this natural sub-ordering is the same as $\overline{\S}_n^{s}$.

\begin{lemma}\label{lemma:Tnlhdisthegapconidition}
$(T_n,\unlhd) \cong (\overline{\S}_n, \leq_{gap}^s)$.
\end{lemma}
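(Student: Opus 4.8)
The plan is to construct an explicit order-isomorphism between $(T_n,\unlhd)$ and $(\overline{\S}_n,\leq_{gap}^s)$ by "flattening" each $\vartheta$-term into the sequence of indices of the $\vartheta$-symbols encountered when reading the term from outside in. Concretely, I would define a map $\Phi\colon T_n \to \overline{\S}_n$ by $\Phi(0) = \varepsilon$ and $\Phi(\vt_{i_1}\vt_{i_2}\cdots\vt_{i_k}0) = i_1 i_2 \cdots i_k$. The first thing to check is that $\Phi$ is well-defined and lands in $\overline{\S}_n$: the formation rule for $T$ says $\vt_i\alpha \in T$ requires $S(\alpha)\le i+1$, i.e. the next $\vartheta$-index $j$ in $\alpha$ satisfies $j\le i+1$; unwinding this along the spine of a term shows precisely that consecutive indices $i_m, i_{m+1}$ satisfy $i_{m+1}\le i_m+1$, which is exactly the defining condition $s_m - s_{m+1}\ge -1$ of $\overline{\S}_n$. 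Since all indices are $<n$, we land in $\overline{\S}_n$, and conversely every sequence in $\overline{\S}_n$ arises from a legal term, so $\Phi$ is a bijection; moreover $\Phi$ carries length to length, which will power the inductions.

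Next I would unwind the definition of $\unlhd$ in terms of the flattened representation. Reading the three clauses, $\alpha \unlhd \vt_i\beta$ holds either because $\alpha \unlhd k_i\beta$ (a strict sub-term sitting below the $i$-th coefficient) or because $\alpha = \vt_i\alpha'$ with $\alpha'\unlhd\beta$. Combining these, I expect to prove by induction on $lh(\alpha)+lh(\beta)$ the characterization: $\alpha\unlhd\beta$ iff, writing $\Phi(\alpha) = a_0\cdots a_{k-1}$ and $\Phi(\beta) = b_0\cdots b_{l-1}$, there is a strictly increasing $f\colon\{0,\dots,k-1\}\to\{0,\dots,l-1\}$ with $a_m = b_{f(m)}$ for all $m$ and, crucially, the "gap" values between consecutive images (and before $f(0)$) are large enough. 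The point is that descending into $k_i\beta$ in clause (2) corresponds exactly to skipping over an initial segment of $\Phi(\beta)$ consisting of indices $>i$ (that is what $k_i$ strips off), which is precisely the strong-gap requirement that everything before the matched position be $\ge$ the matched label; and clause (3) aligns the current heads. So $\unlhd$ unwinds into $\leq_{gap}^s$ on the nose.

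The key step, and the main obstacle, is getting this correspondence between "$k_i\beta$" and "the initial segment of $\Phi(\beta)$ of indices $>i$" exactly right, and threading it through the induction so that the strong-gap condition (including condition 3 on the part before $f(0)$) comes out correctly rather than the weak one. I would handle the forward direction ($\alpha\unlhd\beta \Rightarrow \Phi(\alpha)\leq_{gap}^s\Phi(\beta)$) by case analysis on which clause of the definition of $\unlhd$ was applied last, using the identity $\Phi(k_i\beta) = $ (the suffix of $\Phi(\beta)$ starting at the first index $\le i$) together with the observation that the stripped prefix consists of indices $> i \ge a_0$; the induction hypothesis then supplies $f$ on the suffix, and prepending handles the gap bookkeeping. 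For the reverse direction I would induct on $l = lh(\Phi(\beta))$, splitting on whether $f(0) = 0$ (use clause 3 of $\unlhd$, matching heads) or $f(0)>0$ (all of $b_0,\dots,b_{f(0)-1}$ are $\ge b_{f(0)} = a_0$, and since $\overline{\S}_n$-sequences can only drop by one step at a time while these are all $\ge a_0$, in fact $b_0 > a_0$... careful here — actually what is needed is just that they are all $> $ the relevant index so that $\Phi(\alpha)\unlhd\Phi(k_{a_0}\beta)$, which is clause 2). Once this equivalence is established, $\Phi$ is the desired isomorphism. I would keep the write-up short by invoking the obvious "flatten the spine" intuition and only spelling out the two induction steps where the coefficient function $k_i$ interacts with the gap condition.
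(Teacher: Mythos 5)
Your proposal is correct and follows essentially the same route as the paper: the paper's map $e$ is exactly your flattening map $\Phi$, and its proof is the same induction on $lh(\al)+lh(\be)$ with the same case split on whether the last $\unlhd$-clause used was $\al\unlhd k_j\be'$ (corresponding to skipping a prefix of indices $>j$, which the strong-gap condition 3 licenses) or head-matching with $\al'\unlhd\be'$. The only nit is the parenthetical $k_{a_0}\be$ near the end: the relevant coefficient in clause 2 of $\unlhd$ is $k_{b_0}\be'$ taken at the head index $b_0$ of $\be$, not at $a_0$, but this does not affect the argument.
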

\begin{proof}
Define $e:T_n \to \overline{\S}_n$ as follows. $e(0)$ is the empty sequence $\varepsilon$. Let $e(\vartheta_i\alpha)$ be $(i)^\frown e(\alpha)$. For example $e(\vartheta_2\vartheta_10)$ is the finite sequence $21$. It is trivial to see that $e$ is a bijection. So the only thing we still need to show is that for all $\al$ and $\be$ in $T_n$, $e(\al)\leq_{gap}^s e(\be)$ if and only if $\al \unlhd \be$. We show this by induction on the sum of the lengths of $\alpha$ and $\beta$.
If $\alpha$ or $\beta$ are equal to $0$, then this is trivial. Assume $\alpha$ and $\beta$ are different from $0$. Hence, $\alpha = \vartheta_i \alpha'$ and $\beta = \vartheta_j \beta'$.
Assume $\alpha \unlhd \beta$. Then $\alpha \unlhd k_j \beta'$ or $i=j$ and $\alpha' \unlhd \beta'$. In the latter case, the induction hypothesis yields $e(\alpha') \leq_{gap}^s e(\beta')$, hence $e(\alpha) = (i)^\frown e(\alpha') \leq_{gap}^s (i)^\frown e(\beta') = e(\beta)$. In the former case, assume $\beta' = \vartheta_{l_1} \dots \vartheta_{l_k} \beta''$, with $l_1,\dots,l_k >j$ and $S(\beta'')\leq j$ such that $k_j(\beta')=\beta''$. The induction hypothesis yields $e(\alpha) \leq_{gap}^s e(\beta'')$. From the strong gap-embeddability relation we obtain $i \leq S(\beta'') \leq j$, hence $e(\alpha) \leq_{gap}^s (jl_1,\dots l_k)^\frown e(\beta'')$ because $j,l_1,\dots,l_k \geq i$. The reverse direction can be proved in a similar way.
\end{proof}

\bigskip

The previous proof also yields $(T_n[0],\unlhd) \cong (\overline{\S}_n[0], \leq_{gap}^s) = (\overline{\S}_n[0], \leq_{gap}^w)$. We prove that the linear order $<$ on $T_n$ is a linear extension of $\lhd$. Let $\alpha \lhd \beta$ if $\al \unlhd \be$ and $\al \neq \be$.

\begin{lemma}\label{lemma:unlhdsubordering<}
If $\alpha \unlhd \be$, then $\al \leq \be$.
\end{lemma}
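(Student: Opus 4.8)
The plan is to prove the statement by induction, but to get the crucial case to close I will first establish an auxiliary monotonicity property: for all $\alpha,\beta\in T$ and all $i<\omega$, if $\alpha\unlhd\beta$ then $k_i\alpha\unlhd k_i\beta$. This I would prove by induction on the derivation of $\alpha\unlhd\beta$ through the three clauses defining $\unlhd$. The case $\alpha=0$ is immediate since $k_i0=0$. If $\beta=\vt_i\beta'$ (renaming indices, $\beta=\vt_j\beta'$) with $\alpha\unlhd k_j\beta'$, I distinguish $j\leq i$ — where $k_i\beta=\beta$, and the induction hypothesis together with $k_i(k_j\beta')=k_j\beta'$ and clause~2 of $\unlhd$ gives $k_i\alpha\unlhd\beta$ — and $j>i$ — where $k_i\beta=k_i(\vt_j\beta')=k_i(k_j\beta')$ (using that every $\vt_{l}$ stripped off by $k_j$ has index $>j>i$), so the induction hypothesis applies directly. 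If $\alpha=\vt_j\alpha'$ and $\beta=\vt_j\beta'$ with $\alpha'\unlhd\beta'$, I again split on $j\leq i$ (then $k_i\alpha=\alpha$, $k_i\beta=\beta$, and the conclusion is clause~3 applied to $\alpha'\unlhd\beta'$) versus $j>i$ (then $k_i\alpha=k_i\alpha'$, $k_i\beta=k_i\beta'$, and we invoke the induction hypothesis on $\alpha'\unlhd\beta'$). The structural fact being used throughout is that $k_j\gamma$ is either $0$ or of the form $\vt_k\delta$ with $k\leq j$, so $k_i$ and $k_j$ commute in the expected way.

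Then I would prove the lemma itself by induction on $lh(\alpha)+lh(\beta)$. If $\alpha=0$ the conclusion is trivial. If $\beta=\vt_i\beta'$ and $\alpha\unlhd k_i\beta'$, then since $lh(k_i\beta')\leq lh(\beta')<lh(\beta)$ the induction hypothesis gives $\alpha\leq k_i\beta'$, and Lemma~\ref{propertiesthetafunction}(3) gives $k_i\beta'<\vt_i\beta'=\beta$, whence $\alpha<\beta$. If $\alpha=\vt_i\alpha'$ and $\beta=\vt_i\beta'$ with $\alpha'\unlhd\beta'$, the induction hypothesis yields $\alpha'\leq\beta'$; if $\alpha'=\beta'$ we are done, and if $\alpha'<\beta'$ I use the auxiliary lemma to get $k_i\alpha'\unlhd k_i\beta'$, apply the induction hypothesis once more (legitimate because $lh(k_i\alpha')+lh(k_i\beta')<lh(\alpha)+lh(\beta)$) to obtain $k_i\alpha'\leq k_i\beta'$, and combine with Lemma~\ref{propertiesthetafunction}(3) to get $k_i\alpha'\leq k_i\beta'<\vt_i\beta'$. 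Clause~3 of the definition of $<$ then delivers $\vt_i\alpha'<\vt_i\beta'$. Note that clause~4 of $<$ is never invoked, since from $\alpha'\unlhd\beta'$ we always obtain $\alpha'<\beta'$ or $\alpha'=\beta'$, never $\alpha'>\beta'$.

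The main obstacle is precisely the side condition $k_i\alpha'<\vt_i\beta'$ demanded by clause~3 of the definition of $<$: this does \emph{not} follow from $\alpha'<\beta'$ alone (it can fail for suitable collapsing terms), so one genuinely needs to exploit the $\unlhd$-structure rather than merely the order relation already obtained — which is exactly what the auxiliary $k_i$-monotonicity lemma provides. The only other point requiring care is bookkeeping the induction measure so that both nested appeals to the induction hypothesis in the last case are on strictly smaller instances, which is why $lh(\alpha)+lh(\beta)$ (rather than the derivation of $\unlhd$) is the right measure for the second induction.
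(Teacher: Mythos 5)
Your proof is correct, and it follows the paper's overall scheme --- induction on $lh(\al)+lh(\be)$, the same case split, and the same identification of the side condition $k_i\al'<\vt_i\be'$ in clause~3 of $<$ as the only non-trivial point --- but it discharges that side condition by a different route. The paper observes that $\al=\vt_i\vt_{j_1}\dots\vt_{j_l}k_i\al'\unlhd\be$ forces $k_i\al'\lhd\be$ (justified by an appeal to the isomorphism with gap-sequences, Lemma~\ref{lemma:Tnlhdisthegapconidition}) and then applies the induction hypothesis to the pair $(k_i\al',\be)$. You instead prove the purely syntactic monotonicity statement that $\al\unlhd\be$ implies $k_i\al\unlhd k_i\be$, apply the induction hypothesis to the pair $(k_i\al',k_i\be')$, and finish with $k_i\be'<\vt_i\be'$ from Lemma~\ref{propertiesthetafunction}. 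Your auxiliary lemma is sound as argued: the two commutation facts you use, namely $k_i(k_j\gamma)=k_j\gamma$ for $j\le i$ and $k_i(k_j\gamma)=k_i\gamma$ for $j>i$, both hold because $k_j\gamma$ is either $0$ or headed by some $\vt_l$ with $l\le j$, and the indices stripped off by $k_j$ all exceed $j$. Your version stays entirely inside the term calculus and yields a reusable monotonicity fact, at the cost of an extra induction; the paper's version is shorter on the page but leans on the combinatorial picture of $\ol{\S}_n$, and its invocation of Lemma~\ref{lemma:Tnlhdisthegapconidition} at that point is itself somewhat informal. Both arguments are valid.
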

\begin{proof}
We prove this by induction on the sum of the lengths of $\al$ and $\be$
Assume $\al \unlhd \be$. If $\al = 0$, then trivially $\al \leq \be$. Assume $\al = \vt_i \al'$. $\al \unlhd \be$ yields $\be = \vt_i \be'$ and either $\al \unlhd k_i\beta'$ or $\al' \unlhd \be'$. In the first case, the induction hypothesis yields $\al \leq k_i \beta' < \vt_i \beta' = \beta$. Assume that $\al' \unlhd \be'$. The induction hypothesis yields $\al' \leq \be'$. if $\al' = \be'$, we can finish the proof, so assume $\al' < \be'$. We want to prove that $k_i\al' < \be$. Using the induction hypothesis, it is sufficient to prove that $k_i\al' \lhd \be$. This follows from $\al = \vt_i \vt_{j_1} \dots \vt_{j_l} k_i\al' \unlhd \be$ (with $j_1,\dots,j_l>i$) and Lemma \ref{lemma:Tnlhdisthegapconidition}.
\end{proof}

\bigskip

The previous lemmas imply that the linear ordering on $T_n[0]$ is a linear extension of $\overline{\S}_n[0]$ with the strong (and weak) gap-embeddability relation and furthermore,
\[o(T_n[0],\unlhd) = o(\overline{\S}^{s}_n[0]) = o(\overline{\S}^{w}_n[0]) = o(\overline{\S}^{w}_n).\]


These results also hold in the case if we allow the addition-operator:
the ordinal notation systems using $\vartheta_i$ and the addition-operator corresponds to a linear extension of Friedman's $\wpo$ $\overline{\mathbb{T}}_n[0]$ with the strong and weak gap-embeddability relation ($\overline{\mathbb{T}}_n[0]$ is defined in a similar way as $\overline{\S}_n[0]$, but with trees). It is our general belief that this is a \textbf{maximal} linear extension. In \cite{anordertheoreticcharacterizationofthehowardbachmannhierarchy, wellpartialorderingsandthebigveblennumber} we already obtained partial results concerning this conjecture. In this paper, we want to investigate whether this is also true for the linearized version of the gap-embeddability relation, i.e. if the well-order $(T_n[0],<)$ is a maximal linear extension of $(T_n[0],\unlhd)  \cong (\overline{\S}_n[0],\leq^s_{gap}) = (\overline{\S}_n[0],\leq^w_{gap})$. This can be shown by proving that the order type of $(T_n[0],<)$ is equal to the maximal order type of $ (\overline{\S}_n[0],\leq^s_{gap})$, which is $\omega_{2n-1}$. 

\medskip

Quite surprisingly, the maximal linear extension principle is not true in this sequential version: if $n>2$, then the order type of $(T_n[0],<)$ is equal to $\omega_{n+1}$. We remark that the maximal linear extension principle is true if $n=1$ and $n=2$. We prove these claims in the next sections.


\section{Maximal linear extension of gap-sequences with one and two labels}

It is trivial to show that the order type of $(T_1[0],<)$ is equal to $\om$, hence $(T_1[0],<)$ corresponds to a maximal linear extension of $\overline{\S}^s_1[0]$. So we can concentrate on the case of $T_2[0]$. We show that the order type of $(T_2[0],<)$ is equal to $\omega^{\omega^\omega}$. This implies that ($T_2[0],<)$ corresponds to a maximal linear extension of $\overline{\S}^w_2[0]$ and that the order type of $(T_2[0],<)$ is equal to $o(\overline{\S}^w_2)$. More specifically, we show that
\[ \sup_{n_1,\dots,n_k} \vartheta_0 \vartheta_1^{n_1}\dots \theta_0\vartheta_1^{n_k} (0) = \omega^{\omega^\omega}.\]
The supremum is equal to $\vartheta_0\vartheta_1\vartheta_2(0)$ and knowing that $\Omega_i$ is defined as $\vartheta_i(0)$, we thus want to show
\[ \vartheta_0 \vartheta_1 \Omega_2 = \omega^{\omega^\omega}.\]

\begin{theorem}\label{lineartheta2labelsproof1} $\vt_0\vt_1\Omega_2 =\om^{\om^\om}$
\end{theorem}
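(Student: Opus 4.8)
The plan is to prove the two inequalities $\vt_0\vt_1\Omega_2 \leq \om^{\om^\om}$ and $\vt_0\vt_1\Omega_2 \geq \om^{\om^\om}$ separately, both via an explicit analysis of the countable ordinals that appear below $\vt_0\vt_1\Omega_2$ in the notation system $T_2$. The key observation is that every element $\alpha \in T_2$ with $S(\alpha) = 0$ and $\alpha < \vt_0\vt_1\Omega_2$ is, by the definition of $T$ and the length-restriction $S(\vt_i\beta)\leq i+1$, of the shape $\vt_0\gamma$ where $\gamma \in T_2[1]$ lies strictly below $\vt_1\Omega_2$; and since $S(\vt_1\delta)\leq 2$ forces the argument $\delta$ to satisfy $S(\delta)\leq 2$, i.e.\ $\delta$ is built only from $\vt_0,\vt_1,\vt_2$ with the outermost allowed symbol being $\vt_2 = \Omega_2$, such a $\gamma < \vt_1\Omega_2$ with $S(\gamma)\le 1$ is a finite string $\vt_1^{n_1}\vt_0\vt_1^{n_2}\vt_0\cdots$ — more precisely $\gamma$ decomposes through its $k_1$-coefficients into a sequence of blocks $\vt_1^{m}$ separated by $\vt_0$'s with the innermost $0$ replaced appropriately. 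I would first make this structural description precise as a lemma, so that elements of $T_2[0]$ below $\vt_0\vt_1\Omega_2$ are in order-preserving bijection with the finite sequences $\vt_0\vt_1^{n_1}\vt_0\vt_1^{n_2}\cdots\vt_0\vt_1^{n_k}0$ already displayed in the text.

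For the upper bound I would show by induction on $lh(\alpha)$ that for every $\alpha\in T_2[0]$ with $\alpha < \vt_0\vt_1\Omega_2$ one has (the order type of the set of $T_2[0]$-elements below $\alpha$) $< \om^{\om^\om}$, equivalently that the order type of $\{\beta\in T_2[0] : \beta < \vt_0\vt_1\Omega_2\}$ is at most $\om^{\om^\om}$. Concretely, using the structural lemma one assigns to each string $\vt_0\vt_1^{n_1}\cdots\vt_0\vt_1^{n_k}0$ the ordinal $\om^{\om^{n_1}}\cdot\,(\text{something})$ — actually the cleanest route is to observe that the map sending such a string to the $\leq_{gap}^w$-data corresponds, via Lemma \ref{lemma:Tnlhdisthegapconidition} and the computations in Theorem \ref{maximal order type sum product and higman}, to an element of $(\overline{\S}_1^s)^*$-like structure, whose order type under the linear ordering $<$ is bounded by $o((\N)^*)=\om^{\om^\om}$. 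The comparison of two such strings under $<$ reduces, by clauses (3)–(4) of the definition of $<$ on $T$ together with Lemma \ref{propertiesthetafunction}(2)–(3), to a lexicographic-type comparison of the exponent sequences $(n_1,\dots,n_k)$, which is exactly the Higman/$*$-ordering on $\N^*$ up to the adjustments recorded in Theorem \ref{maximal order type sum product and higman}; this yields the bound $\om^{\om^\om}$.

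For the lower bound I would exhibit, for each finite sequence $(n_1,\dots,n_k)$ of natural numbers, the term $\vt_0\vt_1^{n_1}\vt_0\vt_1^{n_2}\cdots\vt_0\vt_1^{n_k}0\in T_2[0]$ and check directly from the definition of $<$ that the assignment $(n_1,\dots,n_k)\mapsto \vt_0\vt_1^{n_1}\cdots\vt_0\vt_1^{n_k}0$ is order-reflecting from a cofinal copy of the order type $\om^{\om^\om}$ (built from $\N^*$ with its maximal-linear-extension ordering) into $(T_2[0],<)$ below $\vt_0\vt_1\Omega_2$; equivalently, one shows $\sup_{n_1,\dots,n_k}\vt_0\vt_1^{n_1}\cdots\vt_0\vt_1^{n_k}0 = \vt_0\vt_1\Omega_2$ and that the initial segments grow fast enough. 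Combining the two bounds gives $\vt_0\vt_1\Omega_2 = \om^{\om^\om}$.

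The main obstacle I anticipate is the bookkeeping in the structural lemma and in the order-comparison: clauses (3) and (4) of the ordering on $T$ involve the coefficients $k_i$ and an asymmetric case split ($\alpha<\beta$ with $k_i\alpha<\vt_i\beta$ versus $\alpha>\beta$ with $\vt_i\alpha\leq k_i\beta$), so verifying that comparison of the terms $\vt_0\vt_1^{n_1}\cdots\vt_0\vt_1^{n_k}0$ genuinely matches the intended ordering on exponent sequences — and in particular handling how a shorter block-prefix compares with a longer one (the "$+1$ versus non-$+1$" phenomenon underlying the three cases in Theorem \ref{maximal order type sum product and higman}) — requires care. Once that dictionary between $T_2[0]$-terms below $\vt_0\vt_1\Omega_2$ and $\N^*$ is nailed down, both inequalities follow from Theorem \ref{maximal order type sum product and higman} and Lemma \ref{propertiesthetafunction}.
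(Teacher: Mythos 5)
Your upper bound is fine, and in fact can be obtained more cheaply than you describe: by Lemmas \ref{lemma:Tnlhdisthegapconidition} and \ref{lemma:unlhdsubordering<} the linear order $<$ on $T_2[0]$ extends $\unlhd\,\cong\,\leq^s_{gap}$ on $\overline{\S}_2[0]$, so $otype(T_2[0],<)\leq o(\overline{\S}^w_2[0])=\om^{\om^\om}$ by the definition of maximal order type together with Theorem \ref{maximal order type sum product and higman}; no combinatorial analysis of $<$ is needed for that half. (The paper itself needs no upper-bound argument at all, because its proof produces an order-preserving \emph{bijection}.)

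The genuine gap is in your lower bound, which is the substantive half. You never fix a concrete well-order of type $\om^{\om^\om}$ on the index sequences against which the map $(n_1,\dots,n_k)\mapsto\vt_0\vt_1^{n_1}\cdots\vt_0\vt_1^{n_k}0$ is to be checked order-preserving: ``$\N^*$ with its maximal-linear-extension ordering'' only asserts existence via de Jongh--Parikh, and an arbitrary maximal linear extension need not be compatible with $<$; choosing the one \emph{induced} by $<$ and then computing its order type is circular. Moreover, your description of the induced order as a ``lexicographic-type comparison of the exponent sequences'' is not correct as stated: because of clauses (3)--(4) of the ordering and the coefficient condition $k_0(\vt_1^{n_1}\ga)=\ga$, it is the \emph{innermost} blocks that dominate the comparison (this is why in Theorem \ref{lineartheta2labelsproof2} the least significant Cantor normal form digit is placed outermost), and establishing the needed inequalities is exactly the content of Lemma \ref{lemmaforlineartheta2labels}, which you defer as ``bookkeeping.'' The paper resolves precisely this point by choosing the Veblen-style system $\varphi_\omega 0$ --- a concrete recursively ordered system of type $\om^{\om^{-1+\omega}}=\om^{\om^\om}$ by Theorem \ref{veblenhierarchy} --- and verifying by a short induction that $\varphi_n\al\mapsto\vt_0\vt_1^n\chi\al$ matches the three clauses of the Veblen ordering against the clauses of the $\vt$-ordering. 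Until you either carry out that verification or an equivalent one (as in Lemma \ref{lemmaforlineartheta2labels}), the lower bound, and hence the theorem, is not proved.
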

\begin{proof} We present a order-preserving bijection from $\varphi_\omega 0$ to $\vt_0 \vt_1 \Omega_2$. Lemma \ref{veblenhierarchy} then yields the assertion.

Define $\chi0:=0$ and $\chi\varphi_n\al:=\vt_0\vt_1^n\chi\al$. Then $\chi$ is order preserving.
Indeed, we show $\al<\be\imp\chi\al<\chi\be$ by induction on $lh(\al)+lh(\be)$.
If $\al=0$ and $\be\not=0$, then trivially $\chi\al<\chi\be$.
Let $\al=\varphi_n\al'<\be=\varphi_m\be'$.
If $\al'<\be$ and $n<m$ then the induction hypothesis yields $\chi\al'<\vt_0\vt_1^m\chi\be'$ and then
$n<m$ yields  $\chi\al=\vt_0\vt_1^n\chi\al'<\vt_0\vt_1^m\chi\be'=\chi\be.$
If $n=m$ and $\al'<\be'$ then $\chi\al=\vt_0\vt_1^n\chi\al'<\vt_0\vt_1^n\chi\be'=\chi\be$.
If $\al\leq\be'$, then $\chi\al\leq\chi\be' <\vt_0\vt_1^m\chi\be'.$
\end{proof}

\medskip

It might be instructive, although it is in fact superfluous, to redo the argument for the standard representation for $\om^{\om^\om}$. First, we need an additional lemma.

\begin{lemma}\label{lemmaforlineartheta2labels}
Let $\al,\be$ and $\ga$ be elements in $T$.
\begin{enumerate}[itemsep = 0ex]
\item $\al<\be<\Om_1$ and $l_i<n$, $k_i>0$ for all $i\leq r$ yield\\
$\vt_0^{k_0}\vt_1^{l_1}\vt_0^{k_1}\ldots  \vt_1^{l_r}\vt_0^{k_r}\vt_1^n\al    < \vt_0\vt_1^n\be$,
\item $\al<\be<\Om_1$ and $l_{ij}<n$, $k_{ij}>0$ for all $i,j$ yield\\
$\vt_0^{k_{00}}\vt_1^{l_{01}}\vt_0^{k_{01}}\ldots \vt_1^{l_{0m_0}}\vt_0^{k_{0m_0}}\vt_1^n\ldots  \vt_0^{k_{r0}}\vt_1^{l_{r1}}  \vt_0^{k_{r1}}\ldots \vt_1^{l_{rm_r}} \vt_0^{k_{rm_r}}\vt_1^n\al   < $\\
$\vt_0^{p_{00}}\vt_1^{q_{01}}\vt_0^{p_{01}} \ldots \vt_1^{q_{0s_0}}\vt_0^{p_{0s_0}}\vt_1^n\ldots  \vt_0^{p_{r0}}\vt_1^{q_{r1}}\vt_0^{p_{r1}}\ldots \vt_1^{q_{rs_r}}\vt_0^{p_{rs_r}}\vt_1^n\be$,
\item  $l_i<n$ and $k_i>0$ for all $i\leq r$ yield $\vt_0^{k_0}\vt_1^{l_1}\vt_0^{k_1}\ldots  \vt_1^{l_r}\vt_0^{k_r}0   < \vt_0\vt_1^n0$.
\end{enumerate}
\end{lemma}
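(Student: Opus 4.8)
The plan is to prove the three items together by a common mechanism, peeling terms from the outside in and using the ordering clauses for $\vt_i$ together with the coefficient facts from Lemma \ref{propertiesthetafunction}. The key observation to exploit is that in all three displays the right-hand side has the shape $\vt_0\vt_1^n(\cdot)$ or $\vt_0\vt_1^n\be$, so to compare a left-hand term $\vt_0 u$ with $\vt_0\vt_1^n\be$ it suffices (by ordering clause 3) to show $u < \vt_1^n\be$ together with the coefficient side condition $k_0(u) < \vt_0\vt_1^n\be$; and to compare $\vt_1^{l_1} v$ with $\vt_1^n w$ when $l_1 < n$ one simply invokes clause 2 ($i<j \imp \vt_i\al < \vt_j\be$). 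I would therefore set up the core estimate as: \emph{if $l < n$, $k>0$, and $w<\vt_1^n w'$ (for the relevant $w'$), then $\vt_0^{k}\vt_1^{l} w < \vt_1^n w'$}, obtained by first noting $\vt_1^l(\ldots) < \vt_1^n w'$ since $l<n$ (clause 2), and then checking that prepending $\vt_0$'s keeps us below, again by clause 3 and Lemma \ref{propertiesthetafunction}(3) which gives $k_0(\cdot)<\vt_0(\cdot)$.

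For item (3) I would argue by induction on $r$. The base case $r=0$ is $\vt_0^{k_0}0 < \vt_0\vt_1^n 0$, which holds because $\vt_0^{k_0-1}0 < \vt_1^n 0$ (if $k_0>1$, use that $\vt_0(\cdot) < \vt_1(\cdot)$ repeatedly; if $k_0=1$ it is $0<\vt_1^n0$) and then clause 3. For the inductive step, the leftmost block is $\vt_0^{k_0}\vt_1^{l_1}(\text{rest})$ with $l_1<n$; by the induction hypothesis the tail $\vt_0^{k_1}\vt_1^{l_2}\ldots\vt_0^{k_r}0$ is $<\vt_0\vt_1^n 0 < \vt_1^n 0$, hence $\vt_1^{l_1}(\text{tail})<\vt_1^n 0$ by clause 2 since $l_1<n$, and finally prepending $\vt_0^{k_0}$ stays below $\vt_0\vt_1^n0$ by clause 3 and the coefficient bound. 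Item (1) is proved the same way with $0$ replaced by $\al$ on the left and $\be$ on the right and the extra hypothesis $\al<\be<\Om_1$: the induction on $r$ is identical, the only change being that the innermost comparison is $\vt_1^n\al<\vt_1^n\be$, which follows from $\al<\be$ and $k_1(\al)\le\al<\be\le\vt_1\be\le\vt_1^n\be$, i.e. clause 3 at level $1$ (applied $n$ times), using $k_i(\al)\le\al$ and $k_i(\al)<\vt_i\al$ from Lemma \ref{propertiesthetafunction}(1),(3).

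Item (2) is the general case: the left-hand side is a sequence of $r{+}1$ groups, each group being a string of $\vt_0$/$\vt_1$-blocks terminated by $\vt_1^n$, ending in $\al$, and similarly on the right ending in $\be$. I would induct on $r$ (the number of $\vt_1^n$-separators minus one). For $r=0$ this is exactly a one-group statement, which reduces to item (1) after absorbing the trailing $\vt_1^n\al$: indeed $\vt_0^{k_{00}}\vt_1^{l_{01}}\ldots\vt_0^{k_{0m_0}}(\vt_1^n\al)$ is of the form treated in (1) with $\al$ replaced by the element $\vt_1^n\al<\Om_1$? — here one must be slightly careful, since $\vt_1^n\al$ need not be below $\Om_1$, so instead of literally citing (1) I would re-run its proof with the terminal element being $\vt_1^n\al$ versus the right-hand terminal $\vt_1^{q_{0s_0}}\vt_0^{p_{0s_0}}\vt_1^n\be$, whose comparison is handled by item (1)/(the core estimate) applied one level up. For the inductive step, compare the first groups: if they are "equal enough" we pass to the tails and use the induction hypothesis; otherwise the first left group is already strictly below the first right group by the single-group analysis, and then clause 3 (with the coefficient bound, noting $k_0$ of the whole left term equals $k_0$ of its first block which is dominated) finishes it. The main obstacle I anticipate is precisely this bookkeeping in (2): tracking the coefficient side-conditions $k_i(\cdot)<\vt_i(\cdot)$ through the nested blocks and making the "first group smaller vs. first group equal" case split precise, since a term like $\vt_0^{k_{00}}\vt_1^{l_{01}}\cdots$ is not in normal form and one has to keep re-expressing it via Lemma \ref{propertiesthetafunction}(2),(3) to see which subterm is the relevant coefficient. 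Everything else is a routine iteration of the three ordering clauses.
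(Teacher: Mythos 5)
Your proposal follows essentially the same route as the paper: induction on the number of blocks with an inner induction on the $\vt_0$-exponents for assertions (1) and (3), and assertion (2) by a further induction on the number of groups, each step reducing to (1) (where, as you note, the terminal element is no longer $\al$ but a term headed by $\vt_0$, which is in particular still below $\Om_1$, so (1) does apply to the tails). One small correction: the inequality $\vt_1^{l}(\text{tail})<\vt_1^{n}w'$ is not an instance of clause 2 (both sides begin with $\vt_1$); you must first peel off $l$ copies of $\vt_1$ via clause 3 and only then invoke clause 2, using that the tail begins with $\vt_0$ because $k>0$ and that at least one $\vt_1$ remains on the right because $l<n$ --- exactly the role the hypotheses $k_i>0$ and $l_i<n$ play in the paper's argument.
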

\begin{proof}
The first assertion follows by induction on $r$:
if $r=0$, then $\vt_0^{k_0}\vt_1^{n}\al<\vt_0\vt_1^n\be$ follows by induction on $k_0$.
If $r>0$, then the induction hypothesis yields $\xi=\vt_0^{k_1}\ldots  \vt_1^{l_r}\vt_0^{k_r}\vt_1^n\al    < \vt_0\vt_1^n\be$.
We have $\xi<\vt_1^{n-l_1}\be$ because $k_1>0$, and thus $\vt_1^{l_1}\xi<\vt_1^n\be$. We prove $\vt_0^{k_0}\vt_1^{l_1}\xi<\vt_0\vt_1^n\be$ by induction on $k_0$. First note that we know $k_0(\vte^{l_1}\xi) = \xi < \vtn\vte^n\be$, hence the induction base $k_0=1$ easily follows. The induction step is straightforward.\\
The second statement follows from the first by induction on the number of involved blocks.\\
The third assertion follows by induction on $r$.
\end{proof}

\begin{theorem}\label{lineartheta2labelsproof2}
$\om^{\om^\om}=\vt_0\vt_1\Omega_2$
\end{theorem}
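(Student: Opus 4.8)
The plan is to give a second, more hands-on proof of $\om^{\om^\om}=\vt_0\vt_1\Omega_2$ that works with the \emph{standard} Cantor-normal-form representation of ordinals below $\om^{\om^\om}$ rather than with the Veblen system $\varphi_\omega 0$, so as to make the combinatorics of the $\vartheta$-terms transparent. Every ordinal $\xi<\om^{\om^\om}$ has a unique representation $\xi=\om^{\xi_1}+\dots+\om^{\xi_p}$ with $\om^\om>\xi_1\geq\dots\geq\xi_p$, and each such exponent $\xi_j<\om^\om$ itself has a normal form $\xi_j=\om^{n_{j,1}}+\dots+\om^{n_{j,q_j}}$ with $n_{j,1}\geq\dots\geq n_{j,q_j}$ natural numbers. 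First I would define a map $\iota$ sending such a $\xi$ to a $\vt$-term built from blocks $\vt_0\vt_1^{m}$: concretely, read the double normal form of $\xi$ from left to right, emit one block $\vt_0\vt_1^{m}$ for each occurrence of $\om^m$ in the exponents (so the summand $\om^{\om^{n_1}+\dots+\om^{n_q}}$ produces $\vt_0\vt_1^{n_1}\vt_0\vt_1^{n_2}\cdots$ with the proviso, imposed by Theorem~\ref{lineartheta2labelsproof1}'s $\chi$, that inner blocks carry an extra $\vt_0$ just as $\chi\varphi_n\al=\vt_0\vt_1^n\chi\al$ nests), ending in $0$. I would then show $\iota$ is an order isomorphism onto $\vt_0\vt_1\Omega_2=\{\al\in T_2:\al<\vt_0\vt_1\vt_20\}$.

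The key steps, in order. Step 1: verify that the image of $\iota$ is exactly the set of $\al\in T$ with $\al<\vt_0\vt_1\Om_2$ — i.e.\ that every element of $T$ below $\vt_0\vt_1\Om_2$ is, after repeatedly applying Lemma~\ref{propertiesthetafunction} and the definition of the ordering, equal to a term of the shape $\vt_0\vt_1^{m_1}\vt_0\vt_1^{m_2}\cdots\vt_0\vt_1^{m_r}0$ with the $m_i$ subject to the monotonicity constraints coming from clauses 3--4 of the ordering definition and the side condition $S(\al)\leq i+1$. Step 2: prove $\iota$ is injective, which is immediate from uniqueness of Cantor normal form once Step 1 pins down the shape of terms. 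Step 3: prove $\iota$ is order preserving, $\xi<\eta\imp\iota\xi<\iota\eta$ — this is where Lemma~\ref{lemmaforlineartheta2labels} does the work: part (3) handles the base case where one side has no trailing $\vt_1$-power, part (1) handles comparisons that differ first inside a single exponent, and part (2) (the "induction on the number of involved blocks" statement) handles the general lexicographic comparison of two full block-sequences. Step 4: conclude surjectivity and hence that $\iota$ is an isomorphism, so $otype(\vt_0\vt_1\Om_2)=otype(\,[0,\om^{\om^\om})\,)=\om^{\om^\om}$.

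The main obstacle I anticipate is Step 1 — the "normal form" analysis showing that below $\vt_0\vt_1\Om_2$ every term really does collapse to the block shape $\vt_0\vt_1^{m_1}\cdots\vt_0\vt_1^{m_r}0$. The subtlety is that the ordering on $T$ is not simply lexicographic on the sequence of indices: clauses 3 and 4 compare $k_i\al$ against $\vt_i\be$, so a term like $\vt_0\vt_0\al$ has to be re-expressed, and one must check that no "illegal" block like $\vt_1\vt_1^n0$ appearing as a proper subterm can sneak an element in below $\vt_0\vt_1\Om_2$ — here the constraint $S(\al)\leq i+1$ in the definition of $T$ (hence $T_2$, where only $\vt_0,\vt_1$ occur) is exactly what forces the $\vt_1$-runs to be bounded and the top symbol to be $\vt_0$. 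Everything downstream (Steps 2--4) is then bookkeeping plus the already-proved Lemma~\ref{lemmaforlineartheta2labels}. Since Theorem~\ref{lineartheta2labelsproof1} already establishes the identity via $\chi$, I would present this proof compactly, noting (as the surrounding text does) that it is "in fact superfluous" but illuminating, and I would not belabor the routine induction-on-length verifications in Steps 2 and 3.
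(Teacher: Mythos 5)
Your overall architecture is the same as the paper's: both arguments build an explicit order-preserving map from a canonical representation of the ordinals below $\om^{\om^\om}$ onto the block terms $\vt_0\vt_1^{m_1}\cdots\vt_0\vt_1^{m_r}0$ and let Lemma \ref{lemmaforlineartheta2labels} do the order comparisons. But there is a genuine gap at the centre: the map $\iota$ is never actually defined, and the version you do describe --- one block $\vt_0\vt_1^{m}$ per occurrence of $\om^{m}$ in the iterated Cantor normal form, read left to right --- is not injective. For instance $\om^{\om^2+\om}+\om^{\om}$ (exponents $\om^2+\om^1$ and $\om^1$) and $\om^{\om^2+\om\cdot 2}$ (single exponent $\om^2+\om^1+\om^1$) emit the same string $\vt_0\vt_1^{2}\vt_0\vt_1\vt_0\vt_1 0$: the term does not remember where one summand of $\xi$ ends and the next begins. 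Your parenthetical ``proviso\dots that inner blocks carry an extra $\vt_0$'' is exactly where all the content sits, and it is left unspecified; moreover, once such markers are inserted you can no longer quote Lemma \ref{lemmaforlineartheta2labels} off the shelf, since its hypotheses ($k_i>0$ and, crucially, $l_i<n$) are tailored to a specific block shape. The paper's way out is to replace the flat CNF by a \emph{recursive radix} expansion: write $\al=\sum_{i\le m}\bigl(\om^{\om^{n-1}}\bigr)^{i}\cdot\al_i$ with digits $\al_i<\om^{\om^{n-1}}$ and $n$ least with $\al<\om^{\om^{n}}$, and set $\chi\al=\vt_0\vt_1^{n}\chi(\al_0)\cdots\vt_0\vt_1^{n}\chi(\al_m)$. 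Each level then contributes blocks with one \emph{fixed} exponent $n$ while the recursive images of the digits contribute only exponents $<n$ --- precisely the hypothesis $l_{ij}<n$ of Lemma \ref{lemmaforlineartheta2labels}(2) --- and the delimitation problem disappears because the separator is the exponent $n$ itself. Note also that $\chi$ places $\al_0$ outermost and the most significant digit $\al_m$ innermost: by Lemma \ref{lemmaforlineartheta2labels}(1) it is the \emph{innermost} $\vt_1^{n}$-block that dominates, so a left-to-right reading of the CNF gets the significance order backwards.

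Two smaller points. Your Step 1 is not where the difficulty lies: in $T_2$ the side condition $S(\al)\le i+1$ is vacuous, so $T_2[0]$ consists exactly of $0$ together with all words $\vt_0\vt_1^{m_1}\cdots\vt_0\vt_1^{m_r}0$ with arbitrary $m_i\ge 0$ and no monotonicity constraints; there is no normal-form collapse to perform. Conversely, surjectivity requires $\iota$ to hit every such word, including those whose exponents increase going inward, which a decreasing-exponent CNF read in the order you describe cannot produce. Once the map is repaired along the paper's lines, the remainder of your plan (induction on lengths, case split on the first coordinate in which the representations differ, Lemma \ref{lemmaforlineartheta2labels} for the comparisons and Lemma \ref{propertiesthetafunction}(6) for substitution into a common context) matches the paper's proof step for step; and, as you note, the identity itself is already secured by Theorem \ref{lineartheta2labelsproof1}, so only this second, ``instructive'' proof is affected.
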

\begin{proof}
Define $\chi:\om^{\om^\om}\to \vt_0\vt_1\Omega_2$ as follows.
Take $\alpha < \om^{\om^\om}$. Let $n$ be the least number such that $\alpha < \om^{\om^n}$. Let $m$ then be minimal such that
\[\al=\om^{\om^{n-1}\cdot m}\cdot \al_m+\cdots+ \om^{\om^{n-1}\cdot 0}\cdot \al_0,\]
with $\al_m\not=0$ and $\al_0,\dots,\al_m <\om^{\om^{n-1}}$. Put $\chi \al$ as the element
\[\vt_0\vt_1^n\chi(\al_0)\cdots \vt_0\vt_1^n\chi(\al_m).\]
It is trivial to see that $\chi$ is surjective.
We claim that $\al<\be$ yields $\chi(\al)<\chi(\be)$. We prove the claim by induction on $lh(\al)+lh(\be)$.\\
Let $\al=\om^{\om^{n-1}\cdot m}\cdot \al'+\tilde{\al}$ and $\be=\om^{\om^{n'-1}\cdot m'}\cdot \be'+\tilde{\be}$ with $\al',\be'>0$, $\tilde{\al}< \om^{\om^{n-1}\cdot m}$ and $\tilde{\be}< \om^{\om^{n'-1}\cdot m'}$.
If $n<n'$, then $\chi(\be)$ contains a consecutive sequence of $\vt_1^{n'}$ which has no counterpart in $\chi(\al)$. Hence, $\chi\al<\chi\be$ follows from a combination of the second and third assertion of the previous lemma.
If $n=n'$ and $m<m'$ then $\chi(\be)$ contains at least one more consecutive sequence of $\vt_1^{n}$ than the ones occurring in $\chi(\al)$. Thus again $\chi\al<\chi\be$ using the second and third assertion of the previous lemma.
If $n=n'$ and $m=m'$ and $\al'<\be'$ then the induction hypothesis yields $\chi(\al')<\chi(\be')$. We know $\chi(\al)=\chi(\alt)\vtn\vte^n\chi(\als)$ and $\chi(\be)=\chi(\bet)\vtn\vte^n\chi(\bes)$. So, the second assertion of the previous lemma yields the assertion.
If $n=n'$ and $m=m'$ and $\al'=\be'$ then $\tilde{\al}<\tilde{\be}$ and the induction hypothesis yield $\chi(\tilde{\al})<\chi(\tilde{\be})$ and $\chi(\al)=\chi(\alt)\vtn\vte^n\chi(\als)$ and $\chi(\be)=\chi(\bet)\vtn\vte^n\chi(\bes)$. The assertion follows from the sixth assertion of Lemma \ref{propertiesthetafunction}.
\end{proof}


\section{The order type of $(T_n[0],<)$ with $n>2$}

As mentioned before, we expected that $(T_n[0],<)$ corresponds to a maximal linear extension of $\overline{\S}_n^w[0]$ and $\overline{\S}_n^s[0]$. This could have been shown by proving that the order type of $(T_n[0],<)$ is equal to $\omega_{2n-1}$. 
However, by calculations of the second author, we saw that $(T_n[0],<)$ does not correspond to a maximal linear extension. Instead we now show that the order type of $(T_{n}[0],<)$ is equal to $\om_{n+1}$ for $n\geq 2$. 
 It is straightforward to prove that the order type of $(T_{n+1}[0],<)$ is equal to $\vt_0\vt_1\vt_2\ldots\vt_n\Omega_{n+1}$, hence we will show that 
\[\omega_{n+2} = \vt_0\vt_1\vt_2\ldots\vt_n\Omega_{n+1},\]
for $n\geq 1$. 
To prove the lower bound ($\leq$), we use results by Sch\"utte and Simpson \cite{schuttesimpson}. The other direction will be shown by turning the already convincing sketch of the second author into a general argument.


\subsection{Lower bound}
In this subsection, we prove $\om_{n+2} \leq\vt_0\vt_1\vt_2\ldots\vt_n\Omega_{n+1}$, where $n \geq 1$.

\begin{definition}
\begin{enumerate}
\item If $\alpha \in  T$, define
\begin{equation*}
\di\al:=\begin{cases}
\vti\al  & \text{ if $S\al\leq i$,}\\
 \vartheta_i \dipe \al &\text{ otherwise.}
\end{cases}
\end{equation*}
\item For ordinals in $\pi(\omega)$, define $\overline{\cdot}$ as follows:
\begin{itemize}
\item $\ol{0}:=0$,
 \item $\ol{\pi_i\al}:=d_{i+1}\oa$.
 \end{itemize}
 \item On $T$, define $0[\be]:=\be$ and $(\vti\al)[\be]:=\vti(\al[\be])$.
 \item Let $\psi$ be the function from $\varphi_{\pi_0(n)}0$ to $T$ which is defined as follows:
 \begin{itemize}
 \item $\psi0:=0$,
 \item $\psi\varphi_{\pi_0\al}\be:=d_0\oa[\psi\be]$.
 \end{itemize}
\end{enumerate}
\end{definition}

It is easy to see that the image of $\psi$ lies in $T_{n+1}[0]$. We show that $\psi$ is order-preserving in order to obtain a lower bound for the order type of $T_{n+1}[0]$.

\begin{lemma}\label{lemma:psiisorderpreserving} Let $\alpha, \beta$ be elements in $\pi(\omega)$ and $\gamma, \delta$ elements in $T$.
\begin{enumerate}
\item
$\al<\be$ and $\ga,\de<\Om$ yields $\oa[\ga]<\ob[\de]$,
\item  $\ga<\de<\Om$ yields $\oa[\ga]<\oa[\de]$,
\item $G_k\al<\be$ and $\ga,\de<\Om$ yield $ k_{k+1}\oa[\ga]<d_{k+1}\ob[\de],$
\item $\alpha < \beta$, $G_k\al<\be$ and $\ga,\de<\Om$ yields $ d_{k+1}\oa[\ga]<d_{k+1}\ob[\de],$
\item If $\zeta, \eta \in \varphi_{\pi_0(n)}0$,  then $\zeta<\eta $ yields $\psi\zeta<\psi\eta$.
\end{enumerate}
\end{lemma}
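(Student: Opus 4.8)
The plan is to prove the five assertions of Lemma~\ref{lemma:psiisorderpreserving} simultaneously, by a single induction, since they are clearly interlocked: item~(4) feeds into item~(3) at a smaller instance, items~(1)--(4) are needed to run item~(5), and item~(3) is exactly the side condition that makes the $\vartheta$-ordering clause in item~(4) go through. The natural induction measure is the sum of the lengths $lh(\oa)+lh(\ob)$ (equivalently $lh(\al)+lh(\be)$ via the translation $\ol{\cdot}$) for items~(1)--(4), together with $lh(\zeta)+lh(\eta)$ for item~(5); one should check at the outset that $lh(\ol{\pi_i\al})$ is governed by $lh(\al)$ so that ``smaller $\al$'' really does mean ``smaller measure''. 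It is also worth recording a couple of preliminary facts before the main induction: that $\ol{\cdot}$ maps $\pi(\omega)$ into $T$ with $S(\oa)=0$ whenever $\al\neq 0$ and $\ga<\Om_1 \Rightarrow \oa[\ga]$ is well-defined in $T$ with $S\le 0$ (so the substitution operation $\cdot[\cdot]$ of this definition matches, on these terms, the one from Lemma~\ref{propertiesthetafunction}), and the translation identity $k_{k+1}(\oa) = \ol{\pi_k\al'}$-type expressions in terms of the $G_k$-sets, i.e. that $G_k\al$ on the ordinal-notation side corresponds under $\ol{\cdot}$ to the relevant $k_{k+1}$-coefficient data on the $T$-side. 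This bookkeeping lemma is the glue that lets one quote the properties of $<$ on $T$ (Lemma~\ref{propertiesthetafunction}) in the language of $G_k$.

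For the individual clauses: item~(2) is essentially the sixth part of Lemma~\ref{propertiesthetafunction} (monotonicity of $\cdot[\cdot]$ in the argument), transported through $\ol{\cdot}$ by induction on $lh(\al)$, using that each $d_{i+1}$ is just a fixed string of $\vartheta$'s applied to the substitution point. Item~(1) is proved by cases on the structure of $\al<\be$ in $\pi(\omega)'$: if $\al=0$ use part~(5) of Lemma~\ref{propertiesthetafunction}; if $\al=\pi_j\al'$, $\be=\pi_k\be'$ with $j<k$ the index comparison in $d_{j+1}$ vs.\ $d_{k+1}$ gives it; if $j=k$ and $\al'<\be'$ one needs $\al'\in B_j(\al')$-type information — but that holds because $\al\in\pi(\omega)$ — to reduce to the $\vartheta_i\al'<\vartheta_i\be'$ clause, which requires exactly the coefficient inequality supplied by item~(3); and the remaining subcase $\al\le k_j(\be')$ (i.e.\ $\al\le$ the relevant $G$-member) again reduces via item~(3). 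Item~(3) unwinds the definition of $d_{k+1}\ob$ as $\vartheta_{k+1}$ applied to (a tail of) $\ob$ and compares it with the $k_{k+1}$-coefficient of $\oa[\ga]$; here one uses that $G_k\al<\be$ translates, through the bookkeeping lemma, into the statement that every ``component ordinal'' inside $\oa$ is $<\be$, hence — by the induction hypothesis for item~(4) at smaller length — its $\ol{\cdot}$-image sits strictly below $d_{k+1}\ob[\de]$; assembling these with part~(2) of Lemma~\ref{propertiesthetafunction} ($k_i(\al)<\vt_i\al$) closes it. Item~(4) is then immediate: $\al<\be$ plus item~(3) are precisely the hypotheses of clause~3 (or, in the $\al>\be$-degenerate-subcase that cannot actually arise here, clause~4) of the definition of $<$ on $T$, after translating.

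Item~(5) is where everything is harvested. Given $\zeta<\eta$ in $\varphi_{\pi_0(n)}0$: if $\zeta=0$ use $\psi\zeta=0$; otherwise $\zeta=\varphi_{\pi_0\al}\zeta'$, $\eta=\varphi_{\pi_0\be}\eta'$ and one runs the three clauses of the Veblen ordering. When $\pi_0\al<\pi_0\be$ and $\zeta'<\eta$: by the induction hypothesis $\psi\zeta'<\psi\eta$, and $\psi\zeta=d_0\oa[\psi\zeta']$, $\psi\eta=d_0\ob[\psi\eta']$; since $\al<\be$ with $G_0\al<\pi_0\be$ hold (the latter because $\pi_0\be\in\pi(\omega)$ forces $G_0\be<\be$, and $\pi_0\al<\pi_0\be$ with part~(5) of the $B_i$-lemma gives $\al<\be$ and the needed containment), apply item~(1)/item~(4) after noting $\psi\zeta',\psi\eta'<\Om_1$. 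When $\pi_0\al=\pi_0\be$, i.e.\ $\al=\be$: $\zeta'<\eta'$ gives $\psi\zeta'<\psi\eta'$ by IH and then item~(2) finishes. When $\pi_0\al>\pi_0\be$ and $\zeta\le\eta'$: $\psi\zeta\le\psi\eta'< d_0\ob[\psi\eta']=\psi\eta$, using part~(5) of Lemma~\ref{propertiesthetafunction} for the last step.

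The main obstacle I expect is the bookkeeping lemma relating $G_k$ on $\pi(\omega)$ to the $k$-coefficients $k_{k+1}$ on $T$ under the translation $\ol{\cdot}$ together with the substitution operations $\cdot[\cdot]$ — in particular getting the index shift right ($\pi_i \leftrightarrow d_{i+1}$, and the clause ``$S\alpha\le i$'' in the definition of $d_i$) so that $\ol{\pi_i\al}$ really does have its first nontrivial coefficient behaving the way item~(3) needs. Once that translation dictionary is nailed down, items~(1)--(5) become a fairly mechanical, if lengthy, simultaneous induction quoting Lemma~\ref{propertiesthetafunction}; the risk is entirely in an off-by-one in the indices or in mishandling the degenerate subcases ($\al=0$, or a coefficient equal rather than strictly below), so I would state and prove the dictionary as an explicit sublemma before touching the five claims.
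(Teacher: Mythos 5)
Your plan follows essentially the same route as the paper: a simultaneous induction in which item (3) supplies the coefficient inequalities that drive the $\vartheta$-comparison in item (4), items (1)--(4) reduce to one another at smaller length, and item (5) is settled by the three clauses of the Veblen ordering exactly as in the paper's Cases 1--3. The dependency structure you describe (item (4) invoked by item (3) at a shorter $\al$, item (3) invoked by item (4) at the same instance) matches the paper's organisation, as does your identification of the $G_k$-versus-$k_{k+1}$ ``dictionary'' as the crux.

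One concrete correction: item (2) is \emph{not} an instance of part 6 of Lemma \ref{propertiesthetafunction}. That part fixes the substituted value $\ga$ and varies the term ($\al<\be$ implies $\al[\ga]<\be[\ga]$), whereas item (2) fixes the term $\oa$ and varies the substituted value ($\ga<\de$ implies $\oa[\ga]<\oa[\de]$); no clause of Lemma \ref{propertiesthetafunction} gives the latter. If you tried to execute your plan by citing that lemma, the step would fail: to push $\ga<\de$ through each $\vartheta_j$ in the string $d_{i+1}\ol{\al'}$ you must verify the coefficient condition $k_j\ol{\al'}[\ga]<d_j\ol{\al'}[\de]$ at every level $j$, and this is exactly item (3) applied with $\be=\al'$, which is legitimate because $\pi_i\al'\in\pi(\omega)$ forces $G_j(\al')<\al'$ for all $j\geq i$. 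The paper proves item (2) by precisely such a downward induction on the index $j$. Since item (3) is available in your simultaneous induction, the fix costs nothing, but it must be made explicit; your attribution of item (3)'s role only to item (4) hides the fact that item (2) needs it too. (Two smaller slips, neither fatal: the ordering on $\pi(\omega)$ is purely lexicographic, so your third subcase ``$\al\leq k_j(\be')$'' in item (1) does not arise there; and in Case 1 of item (5) the containment you need is $G_0\al<\be$, obtained from $G_0\al<\al<\be$, not from $G_0\be<\be$.)
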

\begin{proof}
We prove assertions $1.$--$4.$ simultaneously by induction on $lh(\alpha)$. If $\alpha = 0$, then $1.$ and $2.$ are trivial. Assertion $3.$ is also easy to see because $k_{k+1} \overline{\alpha} [\gamma] = \gamma < \Omega \leq d_{k+1} \overline{\beta}[\delta]$. In assertion $4.$, $d_{k+1} \overline{\alpha} [\gamma] = \vartheta_{k+1} \gamma$. Now,  $d_{k+1} \overline{\beta}[\delta] = \vartheta_{k+1} \zeta$ for a certain $\zeta \geq \Omega$. Therefore, $\gamma < \zeta$ and $k_{k+1}\gamma = \gamma < d_{k+1} \overline{\beta}[\delta]$, which yields $d_{k+1} \overline{\alpha} [\gamma] = \vartheta_{k+1} \gamma < d_{k+1} \overline{\beta}[\delta]$.

\medskip

From now on, assume $\alpha  = \pi_i \alpha'$.

\medskip

Assertion $1.$: $\alpha < \beta$ yields $\beta = \pi_j \beta'$ with $i \leq j$. If $i<j$, then the assertion follows. Assume $i = j$. Then $\alpha' < \beta'$. We know that $G_i(\alpha') < \alpha'$ because $\pi_i \alpha' \in \pi(\omega)$. Assertion 4. and $\alpha' < \beta'$ yield $d_{i+1} \overline{\alpha'}[\gamma] < d_{i+1} \overline{\beta'} [\delta]$, which is $\overline{\alpha}[\gamma] < \overline{\beta} [\delta]$.


\medskip

Assertion $2.$: We know that $G_i(\alpha') < \alpha'$, hence $G_l(\alpha' ) < \alpha'$ for all $l \geq i$. Assertion $3.$ then yields $k_{l+1} \overline{\alpha'}[\gamma] < d_{l+1}  \overline{\alpha'} [\delta]$ for all $l \geq i$.
If $\alpha' = 0$, then assertion $2.$ easily follows from $\gamma < \delta$. Assume $\alpha' \neq 0$.

If $S(\overline{\alpha'}) \leq i+1$, then $\overline{\alpha}[\gamma] = d_{i+1} \overline{\alpha'} [\gamma] = \vartheta_{i+1} \overline{\alpha'} [\gamma]$. Therefore, assertion $2.$ follows if $\overline{\alpha'}[\gamma] < \overline{\alpha'}[\delta]$ and $k_{i+1}\overline{\alpha'}[\gamma] <  \vartheta_{i+1} \overline{\alpha'} [\delta] = d_{i+1} \overline{\alpha'} [\delta]$. We already know that the second inequality is valid. The first inequality follows from the main induction hypothesis.

Assume now $S(\overline{\alpha'}) > i+1$. We claim that $d_j\overline{\alpha'}[\gamma] < d_j \overline{\alpha'}[\delta]$ for all $j \in \{ i+1, \dots, S(\overline{\alpha'}) \}$. Assertion $2.$ then follows from $j=i+1$.  We prove our claim by induction on $l= S(\overline{\alpha'}) - j \in \{0, \dots , S(\overline{\alpha'})-i-1\}$.
If $l = 0$, then $j= S(\overline{\alpha'})  > i+1$. Then the claim follows if $k_j \overline{\alpha'}[\gamma] < d_j \overline{\alpha'}[\delta]$ and $\overline{\alpha'}[\gamma] < \overline{\alpha'}[\delta]$. The first inequality follows from assertion $3.$ and the fact that $G_{j-1}(\alpha') < \alpha'$. The second inequality follows from the main induction hypothesis.
Now, assume that the claim is true for $l$. We want to prove that it is true for $l+1 = S(\overline{\alpha'}) -j$. Hence, $l = S(\overline{\alpha'}) - (j+1)$. The induction hypothesis yields $d_{j+1}\overline{\alpha'}[\gamma] < d_{j+1} \overline{\alpha'}[\delta]$. We also see that $j \geq i+1$, so $j-1 \geq i$, hence $k_{j} \overline{\alpha'}[\gamma] < d_{j}  \overline{\alpha'} [\delta]$. Because $S(\overline{\alpha'})-j = l+1 >0$, we have $S(\overline{\alpha'}) > j$. Hence, $d_{j}\overline{\alpha'}[\gamma]  = \vartheta_{j}d_{j+1} \overline{\alpha'}[\gamma]$. The claim follows if $k_{j} \overline{\alpha'}[\gamma] < d_{j} \overline{\alpha'}[\delta]$ and $d_{j+1} \overline{\alpha'}[\gamma] < d_{j+1} \overline{\alpha'}[\delta]$, but we already know that both inequalities are true.

\medskip

Assertion $3.$: If $i<k$, then $k_{k+1} \overline{\alpha}[\gamma] = \overline{\alpha}[\gamma] < d_{k+1}\overline{\beta}[\delta]$ because $S(\overline{\alpha}[\gamma]) = i+1 < k+1$.\\
If $i>k$, then $k_{k+1} \overline{\alpha}[\gamma] = k_{k+1} \overline{\alpha'}[\gamma]$. Therefore, $G_k(\alpha) = G_k(\alpha') \cup \{\alpha'\} < \beta$ and the induction hypothesis yield the assertion.\\
Assume that $i=k$. Then $k_{k+1} \overline{\alpha}[\gamma] = \overline{\alpha}[\gamma] = d_{k+1} \overline{\alpha'}[\gamma]$ and $G_k(\alpha) = G_k(\alpha') \cup \{\alpha'\} < \beta$. The induction hypothesis on assertion $4.$ yields $d_{k+1} \overline{\alpha'}[\gamma] < d_{k+1} \overline{\beta}[\delta]$, from which we can conclude the assertion.

\medskip

Assertion $4.$: $\alpha < \beta$ yields $\beta = \pi_j \beta'$ with $i \leq j$.

If $i+1 = S( \overline{\alpha}) \leq k+1$, then $d_{k+1} \overline{\alpha}[\gamma] = \vartheta_{k+1} \overline{\alpha}[\gamma]$. There are two sub-cases: either $j+1 = \overline{\beta}[\delta] \leq k+1$ or not. In the former case, we obtain $d_{k+1} \overline{\beta}[\delta] = \vartheta_{k+1} \overline{\beta}[\delta]$. Assertion $4.$ then follows from assertions $1.$ and $3.$ and the induction hypothesis. In the latter case, we have $d_{k+1} \overline{\beta}[\delta]  =  \vartheta_{k+1} d_{k+2} \overline{\beta}[\delta]$. Assertion $4.$ follows from $\overline{\alpha}[\gamma] < d_{k+2} \overline{\beta}[\delta]$ and assertion $3.$ The previous strict inequality is valid because $S(\overline{\alpha}[\gamma]) = i+1 \leq k+1 < k+2$.

From now on assume that $i+1 = S( \overline{\alpha}) > k+1$. Actually, we only assume that $S(\overline{\alpha}) \geq k$.

$G_k \alpha < \beta$ yields $G_l \alpha < \beta$ for all $l \geq k$. We claim that $d_{j+1} \overline{\alpha} [\gamma] < d_{j+1} \overline{\beta} [\delta]$ for all $j \in \{k , \dots, S(\overline{\alpha})\}$ and show this by induction on $l= S(\overline{\alpha}) - j \in \{0, \dots, S(\overline{\alpha}) - k\}$. The assertion then follows from taking $l = S(\overline{\alpha}) - k$.

If $l=0$ or $l=1$, then $S(\overline{\alpha}) = k$ or equals $k+1$, hence the claim follows from the case $S( \overline{\alpha}) \leq k+1$. Assume that the claim is true for $l \geq 1$. We want to prove that this is also true for $l+1 =  S(\overline{\alpha}) - j $. The induction hypothesis on $l= S(\overline{\alpha}) - (j+1)$ yields $d_{j+2} \overline{\alpha} [\gamma] < d_{j+2} \overline{\beta} [\delta]$. Now because $l \geq 1$, we have $S(\overline{\beta}) \geq S(\overline{\alpha}) \geq j+2 > j+1$. So, $d_{j+1} \overline{\alpha} [\gamma] = \vartheta_{j+1}d_{j+2} \overline{\alpha} [\gamma]$ and $d_{j+1} \overline{\beta} [\delta] = \vartheta_{j+1} d_{j+2} \overline{\beta} [\delta]$. Then the claim is valid if $d_{j+2} \overline{\alpha} [\gamma] < d_{j+2} \overline{\beta} [\delta]$ and $k_{j+1} \overline{\alpha} [\gamma] < d_{j+1} \overline{\beta} [\delta]$. We already know the first strict inequality. The second one follows from assertion $3.$ and $j \geq k$.

\medskip

Assertion $5.$: We prove this by induction on $lh(\zeta) + lh(\eta)$. Assume $\zeta = \varphi_{\pi_0 \alpha} \gamma < \varphi_{\pi_0 \beta} \delta = \eta$. There are three cases.

Case 1: $\pi_0 \alpha < \pi_0 \beta$ and $\gamma < \eta$. The induction hypothesis yields $\psi(\gamma) < \psi(\eta)$. Furthermore, we know that $\alpha < \beta$. If $\alpha = 0$, then $d_0\overline{\alpha}[\psi(\gamma)]  = \vt_0 \psi(\gamma)$. We want to check if this is strictly smaller than $\psi(\eta) = d_0\overline{\beta}[\psi(\delta)] =\vartheta_0 d_1 \overline{\beta}[\psi(\delta)]$. Trivially $\psi(\gamma) <d_1 \overline{\beta}[\psi(\delta)]$. Furthermore, $k_0(\psi(\gamma)) = \psi(\gamma) < \psi(\eta)$. Hence $\psi(\zeta) = \vt_0 \psi(\gamma) < \vartheta_0 d_1 \overline{\beta}[\psi(\delta)] = \psi(\eta)$. Assume now $0<\alpha<\beta$. We want to prove that
\begin{align*}
&d_0\overline{\alpha}[\psi(\gamma)] = \vartheta_0 d_1 \overline{\alpha}[\psi(\gamma)] \\
<{}& {}d_0\overline{\beta}[\psi(\delta)] =\vartheta_0 d_1 \overline{\beta}[\psi(\delta)].
\end{align*}
Assertion $4.$, $\alpha < \beta$ and $G_0 (\alpha) < \alpha < \beta$ yield $d_1\overline{\alpha}[\psi(\gamma)] < d_1\overline{\beta}[\psi(\delta)]$. Additionally,
\[k_0 d_1\overline{\alpha}[\psi(\gamma)]  = \psi(\gamma) < \psi(\eta)  = \vartheta_0 d_1 \overline{\beta}[\psi(\delta)],\]
hence
$d_0\overline{\alpha}[\psi(\gamma)] < d_0\overline{\beta}[\psi(\delta)]$.

Case 2: $\pi_0 \alpha = \pi_0 \beta$ and $\gamma < \delta$. The induction hypothesis yields $\psi(\gamma) < \psi(\delta)$. Assertion $2.$ on $\pi_0 \alpha$ then yields $\overline{\pi_0 \alpha} [\psi(\gamma)] < \overline{\pi_0 \alpha} [\psi(\delta)]$. Hence, $d_1 \overline{\alpha} [\psi(\gamma)] < d_1 \overline{\alpha} [\psi(\delta)] =d_1 \overline{\beta} [\psi(\delta)]$.
Additionally,
\[k_0 d_1\overline{\alpha}[\psi(\gamma)]  = \psi(\gamma) < \psi(\delta) = k_0(d_1 \overline{\beta}[\psi(\delta)]) \leq \vartheta_0(d_1 \overline{\beta}[\psi(\delta)]),\]
hence $d_0\overline{\alpha}[\psi(\gamma)] < d_0\overline{\beta}[\psi(\delta)]$.

Case 3.: $\pi_0 \alpha > \pi_0 \beta$ and $\zeta < \delta$. Then $\psi(\zeta) < \psi( \delta) \leq k_0(d_1 \overline{\beta}[\psi(\delta)]) \leq \vartheta_0(d_1 \overline{\beta}[\psi(\delta)]) = \psi(\eta)$.
\end{proof}

\begin{corollary} $\om_{n+2}\leq \vtn\vte\ldots\vt_n\Om_{n+1}$
\end{corollary}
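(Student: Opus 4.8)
The plan is to derive the corollary directly from the order-preserving injection $\psi:\varphi_{\pi_0(n)}0\to T_{n+1}[0]$ established in Lemma~\ref{lemma:psiisorderpreserving}(5), together with the computations of order types recorded earlier in the excerpt. Since $\psi$ is order-preserving, it witnesses $otype(\varphi_{\pi_0(n)}0,<)\leq otype(T_{n+1}[0],<)$; indeed, an order-preserving map between linear orders is automatically injective, so its image is a suborder of $(T_{n+1}[0],<)$ of the same order type as the domain. Thus it suffices to identify the order type of the domain and to recall that $otype(T_{n+1}[0],<)=\vt_0\vt_1\cdots\vt_n\Om_{n+1}$, which was noted to be straightforward just before the statement.

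First I would pin down $otype(\varphi_{\pi_0(n)}0,<)$. By Theorem~\ref{ordertypepi}(1), $\pi_0(n)$ is an ordinal notation system for $\pi_0\Omega_n=\omega_n[1]=\omega_n$ (for $n>0$); equivalently $otype(\pi_0(n),<)=\omega_n$. Feeding this into Theorem~\ref{veblenhierarchy} with $T=\pi_0(n)$ and $\alpha=\omega_n\in\varepsilon_0\setminus\{0\}$ gives $otype(\varphi_{\pi_0(n)}0,<)=\omega^{\omega^{-1+\omega_n}}=\omega^{\omega^{\omega_n}}=\omega_{n+2}$, using $-1+\omega_n=\omega_n$ since $\omega_n$ is a limit, and $\omega^{\omega^{\omega_n}}=\omega_2[\omega_n]=\omega_{n+2}$. (One must check the edge case $n=1$, where $\omega_1=\omega$, $-1+\omega=\omega$, and $\omega^{\omega^{\omega}}=\omega_3=\omega_{n+2}$, so the formula is uniform for $n\geq 1$.)

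Then I would assemble the chain of (in)equalities:
\[
\omega_{n+2}=otype(\varphi_{\pi_0(n)}0,<)\leq otype(T_{n+1}[0],<)=\vt_0\vt_1\cdots\vt_n\Om_{n+1},
\]
where the middle inequality is exactly Lemma~\ref{lemma:psiisorderpreserving}(5) (order type cannot decrease under an order embedding), and the outer identifications are Theorems~\ref{veblenhierarchy} and~\ref{ordertypepi} as above together with the remark that $otype(T_{n+1}[0],<)=\vt_0\vt_1\cdots\vt_n\Om_{n+1}$. Re-indexing $n+1\mapsto n$, or rather noting the statement is phrased for the tower $\vtn\vte\ldots\vt_n\Om_{n+1}$ with $n\geq 1$ (so this is the $T_{n+1}[0]$ case), this is precisely the claimed bound $\om_{n+2}\leq\vtn\vte\ldots\vt_n\Om_{n+1}$.

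The only real work is already done in Lemma~\ref{lemma:psiisorderpreserving}; here the possible pitfall is purely bookkeeping: making sure the ordinal arithmetic $\omega^{\omega^{-1+\omega_n}}=\omega_{n+2}$ is applied with the correct convention for $\omega_k[\cdot]$ from the Notation, and that the range restriction ``$i<n$'' in the definition of $\pi_0(n)$ lines up with the ``$n+1$ labels'' appearing in $T_{n+1}[0]$ (this is why the domain uses $\pi_0(n)$ while the codomain uses $T_{n+1}[0]$). No further estimates are needed, so I do not anticipate a genuine obstacle beyond this indexing care.
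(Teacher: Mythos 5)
Your argument is exactly the paper's: it cites Theorems \ref{veblenhierarchy} and \ref{ordertypepi} to get $otype(\varphi_{\pi_0(n)}0)=\omega_{n+2}$ and then applies assertion 5 of Lemma \ref{lemma:psiisorderpreserving} together with $otype(T_{n+1}[0])=\vt_0\cdots\vt_n\Om_{n+1}$. Your extra bookkeeping on $-1+\omega_n=\omega_n$ and the indexing is correct but just makes explicit what the paper leaves implicit.
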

\begin{proof}
From the Theorems \ref{veblenhierarchy} and \ref{ordertypepi}, we know that the order type of $\varphi_{\pi_0(n)}0$ is $\omega_{n+2}$. Therefore, using assertion 5 in Lemma \ref{lemma:psiisorderpreserving}, we obtain $\omega_{n+2} \leq otype(T_{n+1}[0]) = \vt_0 \dots \vt_n \Omega_{n+1}$.
\end{proof}


\subsection{Upper bound}
In this subsection, we prove $\vt_0\vt_1\vt_2\ldots\vt_n\Omega_{n+1}  = otype(T_{n+1}[0]) \leq \om_{n+2}$. For this purpose, we introduce a new notation system with the same order type as $T_n$.

\begin{definition}
Let $n<\omega$. Define $T'_{n+1}$ as the least subset of $T_{n+1}$ such that
\begin{itemize}[noitemsep]
\item $0 \in T'_{n+1}$,
\item if $\alpha \in T'_{n+1}$, $S\alpha=i+1$ and $i<n$, then $\vartheta_i \alpha \in T'_{n+1}$,
\item if $\alpha \in T'_{n+1}$, then $\vartheta_n \alpha \in T'_{n+1}$.
\end{itemize}
Note that for all $\alpha \in T'_{n+1}$, we have $S\alpha \leq n$. Let $T'_0$ be $\{0\}$ and define $T'_n[m]$ accordingly as $T_n[m]$.
\end{definition}

\begin{lemma}\label{ordertypeT'_n}
The order types of $T'_{n}$ and $T_n$ are equal.
\end{lemma}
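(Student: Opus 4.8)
The natural approach is to exhibit an order-isomorphism between $T'_n$ (with the ordering $<$ inherited from $T$) and $T_n$, or at least to show that each is order-embeddable in the other, since both carry the linear order $<$ and the claim is just equality of order types. The key observation is that $T'_{n+1}$ is obtained from $T_{n+1}$ by restricting, at each application of $\vartheta_i$ with $i<n$, to arguments $\alpha$ with $S\alpha=i+1$ rather than merely $S\alpha\le i+1$; the topmost symbol $\vartheta_n$ is unrestricted. So every element of $T'_{n+1}$ is already an element of $T_{n+1}$, giving the trivial embedding $T'_{n+1}\hookrightarrow T_{n+1}$ and hence $otype(T'_{n+1})\le otype(T_{n+1})$. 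The work is in the other direction.

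For that direction I would define a map $r\colon T_{n+1}\to T'_{n+1}$ by recursion that ``pads'' each argument so the side condition $S\alpha=i+1$ is met, and show it is order-preserving. Concretely, given $\vartheta_i\alpha\in T_{n+1}$ with $S\alpha\le i+1$: if $i=n$ just set $r(\vartheta_n\alpha):=\vartheta_n r(\alpha)$; if $i<n$ and the recursively processed argument $r(\alpha)$ already has $S(r(\alpha))=i+1$, keep $\vartheta_i r(\alpha)$; otherwise $S(r(\alpha))\le i$, and we insert a block of ascending $\vartheta$'s to raise the index, e.g. replace $\vartheta_i\alpha$ by $\vartheta_i\vartheta_{i+1}\cdots\vartheta_n\vartheta_n\cdots$ terminating appropriately so that the innermost argument is $r(\alpha)$ — in effect applying the $d_{i+1}$-style operator from the lower-bound section. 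One must check that the result genuinely lands in $T'_{n+1}$ (each inserted $\vartheta_j$ now has argument of $S$-value exactly $j+1$, using Lemma~\ref{propertiesthetafunction}) and that $r$ restricted to $\alpha$ with $S\alpha\le 0$ gives a map into $T'_{n+1}[0]$, which is what is ultimately needed. Order-preservation of $r$ is proved by induction on $lh(\alpha)+lh(\beta)$, case-splitting on the leading indices exactly as in the proofs of Theorems~\ref{lineartheta2labelsproof1} and \ref{lineartheta2labelsproof2}; the padding blocks are handled by the computations in Lemma~\ref{lemmaforlineartheta2labels}, whose whole point is that inserting such $\vartheta_0^{k}\vartheta_1^{l}\cdots$ blocks before a common tail is compatible with $<$.

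Injectivity (hence that $r$ is a quasi-embedding in the strong sense of being order-reflecting, or simply that $o(T'_n)\ge o(T_n)$ via Lemma~\ref{quasi-embedding} applied to the linear orders) follows because the padding is canonical: from $r(\alpha)$ one can read off $\alpha$ by deleting the inserted ascending blocks, these being recognizable as maximal runs $\vartheta_i\vartheta_{i+1}\cdots$ of consecutively increasing indices that are not already present in a normal $T_{n+1}$-term. Combining $otype(T'_{n+1})\le otype(T_{n+1})$ with $otype(T_{n+1})\le otype(T'_{n+1})$ gives equality, and the boundary cases $n=0$ (both sides $\{0\}$) and the agreement $T'_n[m]$ vs.\ $T_n[m]$ are immediate from the definitions.

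I expect the main obstacle to be bookkeeping in the order-preservation argument: one must verify that the inserted blocks never create a term larger than intended when comparing $r(\vartheta_i\alpha)$ with $r(\vartheta_j\beta)$ for $i<j$ or for $i=j$ with $\alpha<\beta$ but $k_i\alpha$ large, and that the side conditions $k_i(\cdot)<\vartheta_i(\cdot)$ in the definition of $<$ survive padding — this is exactly where parts~2 and~3 of Lemma~\ref{propertiesthetafunction} and all three parts of Lemma~\ref{lemmaforlineartheta2labels} get used, and getting the induction measure and the nesting of sub-inductions right is the delicate point rather than any single inequality.
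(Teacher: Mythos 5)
Your plan is essentially the paper's proof: the inclusion $T'_n\subseteq T_n$ gives $otype(T'_n)\le otype(T_n)$, and the reverse inequality comes from an order-preserving padding map, which the paper defines uniformly by $\psi(\vartheta_i\alpha)=\vartheta_i\vartheta_{i+1}\cdots\vartheta_m\psi(\alpha)$ (writing $n=m+1$) and verifies by induction on $lh(\alpha)+lh(\beta)$ together with an auxiliary claim, proved by induction on $m-i$, that $\psi(\xi)<\psi(\zeta)<\Omega_{i+1}$ implies $\psi(\vartheta_i\xi)<\psi(\vartheta_i\zeta)$. The only substantive difference is that the paper pads \emph{unconditionally} rather than only when $S$ of the processed argument is too small, which keeps every image term of a single uniform shape and so avoids the asymmetric cases your conditional version would create; also, since both orders are linear, order-preservation already yields injectivity, so your separate argument for recovering $\alpha$ from $r(\alpha)$ is unnecessary.
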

\begin{proof}
Trivially, $T'_n \subseteq T_n$, hence $otype(T'_n) \leq otype(T_n)$. Now, we give an order-preserving function $\psi$ from $T_n$ to $T'_n$. If $n=0$, this function appears trivially. So assume $n=m+1>0$.
\[
\begin{array}{llll}
 \psi: & T_{m+1} & \to &T'_{m+1},\\
 & 0 &\mapsto & 0,\\
 & \vartheta_i \alpha& \mapsto & \vartheta_i \vartheta_{i+1} \dots \vartheta_m \psi(\alpha).
\end{array}
\]

Let us first prove the following claim: for all $i\leq m$, if $\psi(\xi)  < \psi(\zeta) < \Omega_{i+1} = \vt_{i+1}0$, then $\psi(\vt_i \xi) < \psi(\vt_i \zeta)$. We prove this claim by induction on $m-i$. $i= m$, then $\psi(\vt_m \xi) = \vt_m \psi(\xi)$ and $\psi(\vt_m \zeta) = \vt_m \psi(\zeta)$. Hence, $\psi(\vt_m \xi) < \psi(\vt_m \zeta)$ easily follows because $k_m(\psi(\xi)) = \psi(\xi) < \psi(\zeta) = k_m(\psi(\zeta)) < \vt_m (\psi(\zeta))$.
Let $i<m$. Then
\begin{align*}
\psi(\vt_i \xi) = {}& {} \vt_i \dots \vt_m \psi(\xi),\\
\psi(\vt_i \zeta) = {} & {} \vt_i \dots \vt_m \psi(\zeta).
\end{align*}
Using the induction hypothesis, we obtain $\psi(\vt_{i+1} \xi) = \vt_{i+1} \dots \vt_m \psi(\xi) < \psi(\vt_{i+1} \zeta) = \vt_{i+1} \dots \vt_m \psi(\zeta)$. Furthermore, $k_i(\vt_{i+1} \dots \vt_m \psi(\xi)) = k_i(\psi(\xi)) = \psi(\xi) < \psi(\zeta) = k_i(\psi(\zeta)) = k_i(\vt_{i+1} \dots \vt_m \psi(\zeta)) < \vt_i(\vt_{i+1} \dots \vt_m \psi(\zeta)) $. Hence, $\psi(\vt_i \xi) $ $ = \vt_i \dots \vt_m \psi(\xi) < \psi(\vt_i \zeta) = \vt_i \dots \vt_m \psi(\zeta)$. This finishes the proof of the claim.

\medskip

Now we prove by main induction on $lh(\alpha) + lh(\beta)$ that $\alpha< \beta$ yields $\psi(\alpha) < \psi(\beta)$.
If $\alpha =0$, then the claim trivially holds.
Assume $0 < \alpha < \beta$. Then $\alpha = \vartheta_i \alpha'$ and $\beta = \vartheta_j \beta'$. If $i<j$, then $\psi(\alpha) < \psi(\beta)$ is also trivial. Assume $i=j\leq m $ and let $\alpha' = \vartheta_{j_1} \dots \vartheta_{j_k} k_i\alpha'$ and $\beta' = \vt_{n_1}\dots \vt_{n_l} k_i \beta'$ with $j_1,\dots,j_k,n_1,\dots, n_l >i$.
$\alpha < \beta$ either yields $\alpha \leq k_i \beta'$ or $\alpha' < \beta'$ and $k_i\alpha' < \beta$. In the former case, the induction hypothesis yields $\psi(\alpha) \leq \psi(k_i\beta') = k_i(\psi(\vt_{n_1}\dots \vt_{n_l} k_i \beta')) = k_i(\psi(\beta')) = k_i(\vartheta_{i+1} \dots \vartheta_m \psi(\beta'))$ \\$ < \vartheta_i(\vartheta_{i+1} \dots \vartheta_m \psi(\beta')) = \psi(\beta)$.

\medskip

Assume that we are in the latter case, meaning $\alpha' < \beta'$ and $k_i\alpha' < \beta$. The induction hypothesis yields $\psi\alpha' < \psi\beta'$ and $\psi(k_i\alpha') < \psi\beta$.
Like before, we attain $\psi(k_i \alpha') = k_i(\vt_{i+1} \dots \vt_m \psi(\alpha')) < \psi \beta = \vartheta_i (\vt_{i+1} \dots \vt_m \psi(\beta') )$. So if we can prove $\vt_{i+1} \dots \vt_m \psi(\alpha')  < \vt_{i+1} \dots \vt_m \psi(\beta') $, we are done. But this follows from the claim: if $i=j<m$, then $S(\alpha'), S(\beta') \leq i+1 \leq m$, hence $\psi(\alpha') < \psi(\beta')  < \Omega_{i+2}$, so $\vt_{i+1} \dots \vt_m \psi(\alpha')  = \psi(\vt_{i+1} \alpha') < \psi(\vt_{i+1}\beta') = \vt_{i+1} \dots \vt_m \psi(\beta')$. If $i=j=m$, then $\vt_{i+1} \dots \vt_m \psi(\alpha')$ and $\vt_{i+1} \dots \vt_m \psi(\beta') $ are actually $ \psi(\alpha')$ and $\psi(\beta') $ and we know that  $\psi(\alpha') <\psi(\beta') $ holds.
\end{proof}

The previous proof also yields that the order types of $T'_n[m]$ and $T_n[m]$ are equal.

\subsubsection{The instructive part: $\vt_0\vt_1\vt_2\Omega_{3} \leq  \omega^{\omega^{\omega^\omega}}$}\label{instructivepart}

In this subsection, we prove that $\omega^{\omega^{\omega^\omega}}$ is an upper bound for $\vt_0\vt_1\vt_2\Omega_{3}$ as an instructive instance for the general case
\[ \vt_0\vt_1\vt_2\ldots\vt_n\Omega_{n+1}  = otype(T_{n+1}[0]) \leq \om_{n+2}.\]
 We will show this by proving that $otype(T'_{3}[0]) \leq  \omega^{\omega^{\omega^\omega}}$.
We start with two simple lemmata, where we interpret $\Omega_i$ as usual as the $i^{th}$ uncountable cardinal number for $i>0$.

\begin{lemma}
If
$\Om_2\cdot \al+\be<\Om_2\cdot \ga+\de$ and $\al,\ga<\varepsilon_0$ and $\be,\de<\Om_2$
and if $\be=\xi\cdot \bes$ where $\bes<\Om_1\cdot \om^\ga+\om^{\om^\ga}\cdot \delta$ and $\xi<\om^{\om^\ga}$,
 then
$\Om_1\cdot \om^\al+\om^{\om^\al}\cdot \be<\Om_1\cdot \om^\ga+\om^{\om^\ga}\cdot \delta$.
\end{lemma}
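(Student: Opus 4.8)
The statement is a purely ordinal-arithmetic comparison, so I would simply unpack the two sides and compare them coefficient by coefficient in their Cantor normal form relative to the critical ordinals $\Om_1$ and $\Om_2$. First I note that because $\al,\ga<\varepsilon_0$, the exponents $\om^\al$, $\om^{\om^\al}$, $\om^\ga$, $\om^{\om^\ga}$ are all below $\Om_1$, so the expression $\Om_1\cdot\om^\al+\om^{\om^\al}\cdot\be$ is in a normal form where $\Om_1\cdot\om^\al$ is the ``high part'' and $\om^{\om^\al}\cdot\be$ (with $\be<\Om_2$, so $\om^{\om^\al}\cdot\be<\Om_2$ since $\om^{\om^\al}<\Om_1$) is strictly below $\Om_1\cdot\om^{\al+1}$, hence genuinely lower-order. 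So the comparison of the two right-hand sides reduces, by the usual lexicographic principle, to: either $\om^\al<\om^\ga$ (equivalently $\al<\ga$), in which case the left side is dominated by $\Om_1\cdot\om^\ga$ and we are done; or $\al=\ga$, in which case we must compare the tails $\om^{\om^\al}\cdot\be$ and $\om^{\om^\ga}\cdot\de=\om^{\om^\al}\cdot\de$, i.e. compare $\be$ with $\de$.

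The hypothesis $\Om_2\cdot\al+\be<\Om_2\cdot\ga+\de$ plays exactly the analogous role on the left: it says either $\al<\ga$ (handled above) or $\al=\ga$ and $\be<\de$. So the whole statement collapses to the single nontrivial case $\al=\ga$, $\be<\de$, $\be,\de<\Om_2$, where I must derive
\[
\Om_1\cdot\om^\al+\om^{\om^\al}\cdot\be \;<\; \Om_1\cdot\om^\al+\om^{\om^\al}\cdot\de,
\]
i.e. $\om^{\om^\al}\cdot\be<\om^{\om^\al}\cdot\de$. Since ordinal multiplication on the left by a fixed ordinal is strictly monotone in the right factor, this is immediate from $\be<\de$ — \emph{provided} $\be<\de$ genuinely holds. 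The only subtlety is that the extra hypothesis about $\be=\xi\cdot\bes$ with $\bes<\Om_1\cdot\om^\ga+\om^{\om^\ga}\cdot\delta$ and $\xi<\om^{\om^\ga}$ is there to handle a degenerate possibility: namely that the naive inference $\Om_2\cdot\al+\be<\Om_2\cdot\ga+\de \Rightarrow (\al<\ga$ or $(\al=\ga$ and $\be<\de))$ could fail to be what we need if $\be$ and $\de$ are not themselves written in comparable normal forms. I would use the factorization $\be=\xi\cdot\bes$ to rewrite $\om^{\om^\al}\cdot\be=\om^{\om^\al}\cdot\xi\cdot\bes$ and observe that $\om^{\om^\al}\cdot\xi<\om^{\om^\al}\cdot\om^{\om^\al}=\om^{\om^\al\cdot 2}\le\om^{\om^{\al+1}}$, keeping the product below $\Om_1$, and then the bound $\bes<\Om_1\cdot\om^\ga+\om^{\om^\ga}\cdot\delta$ is precisely what guarantees $\om^{\om^\al}\cdot\be$ stays below the target right-hand side.

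\textbf{Main obstacle.} The genuinely delicate step is the bookkeeping that ensures the ``tail'' $\om^{\om^\al}\cdot\be$ really is lexicographically subordinate to the ``head'' $\Om_1\cdot\om^\al$ and does not overflow into it — in other words, verifying that $\om^{\om^\al}\cdot\be < \Om_1$ whenever $\be<\Om_2$, which uses $\om^{\om^\al}<\Om_1$ and the absorption $\gamma\cdot\Om_2=\Om_2$ for $\gamma<\Om_1$, combined with the role of the auxiliary factorization hypothesis. Once that is pinned down, the two displayed inequalities match up term for term and strict monotonicity of $+$ and $\cdot$ finishes it. I expect the proof to be short: a case split on whether $\al<\ga$ or $\al=\ga$, the trivial first case, and in the second case a one-line application of left-monotonicity of multiplication after substituting the factorization of $\be$.
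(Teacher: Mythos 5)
There is a genuine gap, and it sits exactly where the lemma is nontrivial. Your reduction rests on the claim that $\om^{\om^\al}\cdot\be$ is ``genuinely lower-order'' than the head $\Om_1\cdot\om^\al$, i.e.\ that $\om^{\om^\al}\cdot\be<\Om_1\cdot\om^{\al+1}$. That is false in general: the hypothesis only gives $\be<\Om_2$, so $\be$ may well be $\geq\Om_1$ (the paper's proof opens by pointing this out), and then the ``tail'' swallows the ``head''. For instance, with $\al<\ga$ and $\be=\Om_1\cdot\om^{\ga+5}$ one has $\Om_2\cdot\al+\be<\Om_2\cdot\ga+\de$, yet $\Om_1\cdot\om^\al+\om^{\om^\al}\cdot\be=\Om_1\cdot\om^{\ga+5}$, which exceeds $\Om_1\cdot\om^\ga+\om^{\om^\ga}\cdot\de$ when $\de<\Om_1$. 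So your claim that in the case $\al<\ga$ ``the left side is dominated by $\Om_1\cdot\om^\ga$ and we are done'' does not hold, and you have the two cases exactly inverted: $\al=\ga$ is the trivial case (there $\be<\de$ follows at once and left-multiplication by $\om^{\om^\al}$ finishes it, with no need for the factorization), while $\al<\ga$ is the case the entire lemma is about, and it is precisely there that the hypothesis $\be=\xi\cdot\bes$ must be used to cap $\be$ by the \emph{target} right-hand side rather than by anything depending on $\al$.

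The correct use of the factorization, which is the paper's whole argument, is: since $\Om_1$ and $\om^{\om^\ga}$ are multiplicatively closed and $\xi<\om^{\om^\ga}$, one gets $\be=\xi\cdot\bes<\xi\cdot(\Om_1\cdot\om^\ga+\om^{\om^\ga}\cdot\de)=\Om_1\cdot\om^\ga+\om^{\om^\ga}\cdot\de$; then, because $\om^{\om^\al}<\om^{\om^\ga}$ when $\al<\ga$, the same absorption gives $\om^{\om^\al}\cdot\be<\om^{\om^\al}\cdot(\Om_1\cdot\om^\ga+\om^{\om^\ga}\cdot\de)=\Om_1\cdot\om^\ga+\om^{\om^\ga}\cdot\de$; finally $\Om_1\cdot\om^\al+\Om_1\cdot\om^\ga=\Om_1\cdot\om^\ga$ absorbs the head. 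Your own sketch of this step also contains a slip: you bound $\om^{\om^\al}\cdot\xi$ by $\om^{\om^\al}\cdot\om^{\om^\al}$, which would require $\xi<\om^{\om^\al}$, whereas the hypothesis only provides $\xi<\om^{\om^\ga}$; the bound you actually need is $\om^{\om^\al}\cdot\xi<\om^{\om^\ga}$, obtained from multiplicative indecomposability of $\om^{\om^\ga}$. As written, your proof establishes only the easy case and asserts the hard case by an invalid domination argument.
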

\begin{proof}
Note that it is possible that $\beta, \delta \geq \Omega_1$.
 If $\al=\ga$ then $\be<\de$ and the assertion is obvious.
So assume $\al<\ga$.
$\bes<\Om_1\cdot \om^\ga+\om^{\om^\ga}\cdot \delta$ yields $\be=\xi \bes<\xi(\Om_1\cdot \om^\ga+\om^{\om^\ga}\cdot \delta)= \Om_1\cdot \om^\ga+\om^{\om^\ga}\cdot \delta$ since $\Om_1$ and $\om^{\om^\ga}$ are multiplicatively closed.
By the same argument $\om^{\om^\al}\be<\om^{\om^\al} (\Om_1\cdot \om^\ga+\om^{\om^\ga}\cdot \delta)= \Om_1\cdot \om^\ga+\om^{\om^\ga}\cdot \delta$.
Finally, $\Om_1\cdot \om^\al+\om^{\om^\al}\cdot \be<  \Om_1\cdot \om^\al+\Om_1\cdot \om^\ga+\om^{\om^\ga}\cdot \delta=\Om_1\cdot \om^\ga+\om^{\om^\ga}\cdot \delta.$
\end{proof}

\begin{lemma}
If $\Om_1\cdot \al+\be<\Om_1\cdot \ga+\de$ and $\al,\ga<\varepsilon_0$ and $\be,\de<\Om_1$ and if  $\be<\om^{\om^\ga}\cdot \delta$, then $\om^{\om^\al}\cdot \be<\om^{\om^\ga}\cdot \delta$.
\end{lemma}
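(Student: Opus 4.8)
The plan is to imitate the argument of the preceding lemma, which becomes considerably shorter here because only a single $\Om_1$-block has to be tracked. First I would use the hypothesis $\Om_1\cdot\al+\be<\Om_1\cdot\ga+\de$ together with $\be,\de<\Om_1$ to extract the coarse fact $\al\leq\ga$: if $\al>\ga$ then $\Om_1\cdot\al\geq\Om_1\cdot(\ga+1)=\Om_1\cdot\ga+\Om_1>\Om_1\cdot\ga+\de$, contradicting the assumption. This splits the proof into two cases.

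If $\al=\ga$, the hypothesis reduces to $\be<\de$; since $\om^{\om^\al}>0$ and left multiplication by a nonzero ordinal is strictly monotone, $\om^{\om^\al}\cdot\be<\om^{\om^\al}\cdot(\be+1)\leq\om^{\om^\al}\cdot\de=\om^{\om^\ga}\cdot\de$, which is the assertion. (Note that in this case the extra hypothesis $\be<\om^{\om^\ga}\cdot\de$ is not needed.)

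If $\al<\ga$, I would use that every power of $\om$ is additively principal: from $\om^\al<\om^\ga$ we get $\om^\al+\om^\ga=\om^\ga$, hence $\om^{\om^\al}\cdot\om^{\om^\ga}=\om^{\om^\al+\om^\ga}=\om^{\om^\ga}$. Then the hypothesis $\be<\om^{\om^\ga}\cdot\de$ and strict monotonicity of left multiplication give $\om^{\om^\al}\cdot\be<\om^{\om^\al}\cdot(\om^{\om^\ga}\cdot\de)=(\om^{\om^\al}\cdot\om^{\om^\ga})\cdot\de=\om^{\om^\ga}\cdot\de$, as claimed.

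I do not expect any real obstacle: the statement is elementary ordinal arithmetic, resting only on the two standard facts that $0<\zeta$ and $x<y$ imply $\zeta\cdot x<\zeta\cdot y$, and that a power of $\om$ absorbs any smaller power of $\om$ on the left. The single point requiring a little care is not to overuse the inequality $\Om_1\cdot\al+\be<\Om_1\cdot\ga+\de$: it contributes only $\al\leq\ga$ and, in the borderline case $\al=\ga$, the refinement $\be<\de$; everything else is routine.
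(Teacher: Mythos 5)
Your proposal is correct and follows essentially the same route as the paper: split on $\al=\ga$ versus $\al<\ga$, handle the first case by $\be<\de$ and monotonicity, and the second by the absorption $\om^{\om^\al}\cdot\om^{\om^\ga}=\om^{\om^\ga}$ applied to the hypothesis $\be<\om^{\om^\ga}\cdot\de$. The only (harmless) addition is that you spell out explicitly why $\al\leq\ga$, which the paper leaves implicit.
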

\begin{proof}
If $\alpha = \gamma$, then $\be < \de$ and the assertion is obvious. So assume $\al <  \ga$. Then $\om^{\om^\al} \cdot \be <\om^{\om^\al} \om^{\om^\ga}\cdot \delta  = \om^{\om^\ga}\cdot \delta $.
\end{proof}

\medskip

The last two lemmas indicate how one might replace iteratively terms in $\vt_i$ (starting with the highest level $i$) by terms in $\om,+,\Om_i$ in an order-preserving way such that terms of level $0$ are smaller than $\epsilon_0$.

\begin{definition}
Define $E$ as the least set such that
\begin{itemize}[noitemsep]
\item $0 \in E$,
\item $\alpha \in E$, then $\omega^\alpha \in E$,
\item $\alpha, \beta \in E$, then $\alpha + \beta \in E$.
\end{itemize}
Define the subset $P$ of $E$ as the set of all elements of the form $\omega^\alpha$ for $\alpha \in E$. This actually means that $P$ is the set of the additively closed ordinals strictly below $\varepsilon_0$.
\end{definition}

A crucial role is played by the following function $f$.
\begin{definition} Let $f(0):=0$ and $f(\om^\ale+\alz):=\om^\ale+f(\ale)+f(\alz)$.
\end{definition}

This definition even works (by magic) also for non Cantor normal forms. So if $\om^\ale+\alz=\alz$ we still have $f(\om^\ale+\alz)=\om^\ale+f(\ale)+f(\alz)[=f(\alz)]$. The function $f$ is easily shown to be order-preserving. Moreover, one finds $\om^\ale\leq f(\om^\ale+\alz)<\om^{\ale+1}$ if $\alpha_2 < \omega^{\alpha_1 + 1}$.

Fix a natural number  $n$.
We formally work with $4$-tuples $(\alpha,\beta,\gamma,\delta) \in E \times T[n-1] \times P \times E$ with $\alpha,\delta \in E$, $\gamma \in P$, $\beta \in T[n-1]$ and $\delta < \gamma$. Let $T[-1] := \{0\}$.
We order these tuples lexicographically. 
Intuitively, we interpret such a tuple as the ordinal
\[\Omega_n \cdot \alpha + \gamma \cdot \beta + \delta,\]
where $\Omega_i$ is as usual the $i^{th}$ uncountable ordinal for $i>0$, but now $\Omega_0$ is interpreted as $0$.

\medskip

We remark that the interpretation of $(\alpha,\beta,\gamma,\delta)$ as an ordinal number is not entirely correct: the lexicographic order on the tuples is not the same as the induced order by the ordering on the class of ordinals $On$. But in almost all applications, we know that $\gamma = \omega^{f(\alpha)}$. And if this true, we know that the order induced by the ordering on $On$ is the same as the defined lexicographic one.
Additionally, the encountered cases where $\gamma \neq \omega^{f(\alpha)}$, we know that if we compare two tuples $(\alpha,\beta,\gamma,\delta)$ and $(\alpha',\beta',\gamma',\delta')$ such that $\alpha = \alpha'$, then we already know that $\gamma = \gamma'$. Hence, the order induced by the ordering on $On$ between these terms is also the same as the lexicographic one.

$\beta$ is either $0$ or of the form $\vt_j \be'$ with $j<n$, hence we can interpret that $\beta < \Omega_n$ for $n>0$. Assume that $\zeta \in P$. Then we know that $\zeta \cdot \Omega_n = \Omega_n$. Hence using all of these interpretations, $\zeta \cdot (\alpha,\beta,\gamma,\delta)$ is still a 4-tuple, namely it is equal to $(\alpha,\beta,\zeta \cdot \gamma,\zeta \cdot \delta)$. We can also define the sum between 4-tuples: assume $n>0$. If $\alpha' >0$, then
\begin{eqnarray*}
(\alpha,\beta,\gamma,\delta) + (\alpha',\beta',\gamma',\delta') & =& \Omega_n \cdot \alpha +  \gamma \cdot \beta + \delta  + \Omega_n \cdot \alpha' + \gamma' \cdot \beta' + \delta' \\
&=& \Omega_n \cdot (\alpha + \alpha') + \gamma' \cdot \beta' + \delta'\\
&=& (\al + \alpha',\beta',\gamma',\delta')
\end{eqnarray*}
If $\alpha' =0$ and $\beta' = 0$, then
\begin{eqnarray*}
(\alpha,\beta,\gamma,\delta) + (\alpha',\beta',\gamma',\delta') & =& \Omega_n \cdot \alpha +  \gamma \cdot \beta + \delta  + \Omega_n \cdot \alpha' + \gamma' \cdot \beta' + \delta' \\
&=& \Omega_n \alpha  + \gamma \cdot \beta + (\delta + \delta')\\
&=& (\al,\beta,\gamma,\delta + \delta')
\end{eqnarray*}
We do not need the case $\al' =0$ and $\be'\neq 0$. If $n=0$, then
\begin{eqnarray*}
(\alpha,\beta,\gamma,\delta) + (\alpha',\beta',\gamma',\delta') & =& \Omega_n \cdot \alpha +  \gamma \cdot \beta + \delta  + \Omega_n \cdot \alpha' + \gamma' \cdot \beta' + \delta' \\
& =&  \delta  + \delta' \\
&=& (0,0,0, \delta + \delta')
\end{eqnarray*}

From now on, we write
\[\Omega_n \cdot \alpha + \gamma \cdot \beta + \delta,\]
instead of the 4-tuple $(\alpha,\beta,\gamma,\delta)$, although we know that the induced order by the ordering on $On$ is not entirely the same as the lexicographic one.

\begin{definition}
Define $T_n^{all}$ as the set consisting of $\Omega_n \cdot \alpha + \omega^{f(\alpha)} \cdot \delta + \gamma$, where $\alpha,\gamma \in E$ with $\gamma < \omega^{f(\alpha)}$ and $\delta \in T[n-1]$. 
\end{definition}

Note that after an obvious translation, $T_0^{all} = E$ and $T_n \subseteq T[n-1] \subseteq T_n^{all}$. 
%


\begin{lemma}\label{vt1vt2} Assume $\als,\bes \in T[0]$.
  If
  \[\al=\vte \vtz^{n_1}\ldots\vte \vtz^{n_p}\als<\be=\vte \vtz^{l_1}\ldots\vte \vtz^{l_q}\bes\]
  with $n_i,l_i>0$, then
\begin{align*}
  & \Om_1\cdot (\om^{n_1}+\cdots +\om^{n_p})+\om^{\om^{n_1}+\cdots +\om^{n_p}+ n_p}\cdot \als+\om^{\om^{n_1}+\cdots +\om^{n_{p}}}  \\
  & +\om^{\om^{n_1}+\cdots +\om^{n_{p-1}}}+\cdots +\om^{\om^{n_1}}\\
  < {}& {} \Om_1\cdot (\om^{l_1}+\cdots +\om^{l_q})+\om^{\om^{l_1}+\cdots +\om^{l_q}+l_q}\cdot \bes+\om^{\om^{l_1}+\cdots +\om^{l_{q}}} \\
 & +\om^{\om^{l_1}+\cdots +\om^{l_{q-1}}}+\cdots + \omega^{\omega^{l_1}}.
  \end{align*}
  \end{lemma}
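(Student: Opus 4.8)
The plan is to reduce the inequality of the two interpreted ordinals to the given inequality $\al<\be$ by induction on the total length $p+q$ of the two $\vt_1\vt_2^{(\cdot)}$-blocks. First I would analyze how the ordering $\al<\be$ on $T[0]$-terms of the form $\vte\vtz^{n_1}\ldots\vte\vtz^{n_p}\als$ behaves: by the defining clauses for $<$ on $T$ (and Lemma~\ref{lemmaforlineartheta2labels} together with the fact that all indices involved are $\geq 1$), $\al<\be$ must fall into one of the familiar cases — either $n_1<l_1$, or $n_1=l_1$ and the ``tail'' $\vte\vtz^{n_2}\ldots\als$ is strictly below the tail of $\be$, or $\als$ is consumed entirely (the length-$0$ base case on one side). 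I would set up the three cases accordingly, writing $A_p:=\om^{n_1}+\cdots+\om^{n_p}$ and $B_q:=\om^{l_1}+\cdots+\om^{l_q}$ for the $\Om_1$-coefficients, so the left-hand side reads $\Om_1\cdot A_p+\om^{A_p+n_p}\cdot\als+\om^{A_p}+\om^{A_{p-1}}+\cdots+\om^{\om^{n_1}}$ and similarly on the right.

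The case $n_1<l_1$ is the decisive one: here $\om^{n_1}<\om^{l_1}$, so $A_p<B_q$ already as a comparison of the $\Om_1$-coefficients — indeed $A_p=\om^{n_1}+(\text{lower-order stuff})<\om^{l_1}\leq B_q$ because every later $\om^{n_i}$ with $n_i>0$ is still $<\om^{l_1+1}$ after summing, so the whole sum $A_p$ stays below $\om^{l_1}$ whenever $n_1<l_1$ (this uses $n_i\geq 1$ but is otherwise just Cantor-normal-form arithmetic). Once $\Om_1\cdot A_p<\Om_1\cdot B_q$, the left-hand side as a whole is $<\Om_1\cdot(A_p+1)\leq\Om_1\cdot B_q\leq$ RHS, since everything added to $\Om_1\cdot A_p$ on the left is $<\Om_1$ — here we crucially use $\als\in T[0]$, so $\als<\Om_1$, whence $\om^{A_p+n_p}\cdot\als<\om^{A_p+n_p+1}<\Om_1$, and the trailing tower-sum is likewise $<\Om_1$. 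In the case $n_1=l_1$ with a strictly smaller tail, I would strip off the common leading $\Om_1\cdot\om^{n_1}$ and the common trailing summand $\om^{\om^{n_1}}$, observe that the remaining expressions are exactly the interpretations of the tails $\vte\vtz^{n_2}\ldots\als$ and $\vte\vtz^{l_2}\ldots\bes$ (after multiplying through by $\om^{\om^{n_1}}$, which is multiplicatively absorbing on the relevant pieces, in the spirit of the two preparatory lemmas just before Definition of $E$), and invoke the induction hypothesis. The base case, where one side is $\als$ or $\bes$ directly (i.e. a block of length $0$), follows because $\als,\bes<\Om_1$ and the right-hand side always has an $\Om_1\cdot(\text{positive})$ leading term when $q\geq 1$.

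The main obstacle I expect is bookkeeping around the ``impure'' non-commutative ordinal arithmetic: the trailing sum $\om^{A_p}+\om^{A_{p-1}}+\cdots+\om^{\om^{n_1}}$ is written in a fixed order and the terms $\om^{A_i}$ are not additively absorbed into one another (they have distinct exponents $A_i$ which are themselves sums of $\om$-powers), so I must be careful that when I factor out $\om^{\om^{n_1}}$ in the $n_1=l_1$ case the residual pieces really do line up with the tail interpretation, and that in the $n_1<l_1$ case the estimate ``left-hand side $<\Om_1\cdot(A_p+1)$'' is genuinely valid despite $\als$ possibly being large (but still countable). The remark in the text that the lexicographic order on tuples and the true ordinal order coincide precisely when $\gamma=\om^{f(\alpha)}$ is the conceptual safeguard here, and since our $\Om_1$-coefficient is $A_p$ while the multiplier of $\als$ is $\om^{A_p+n_p}=\om^{f(?)}$-shaped, I would double-check that we are always in the ``coincidence'' regime before reading off the comparison. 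Modulo that care, each case is a short computation, and the induction on $p+q$ closes it.
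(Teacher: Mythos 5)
Your overall strategy (induct on the block structure, case-split according to how $\al<\be$ arises, compare the $\Om_1$-coefficients first) is the right shape, and your $n_1=l_1$ case matches what the paper does. But there are two genuine gaps.

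First, your case analysis of $\al<\be$ is incomplete. Besides ``$n_1<l_1$'' and ``$n_1=l_1$ and smaller tail,'' the ordering on $T$ also allows $\al\leq k_1(\vtz^{l_1}\ldots\bes)=\vte\vtz^{l_2}\ldots\vte\vtz^{l_q}\bes$, i.e.\ the whole of $\al$ embeds into $\be$ with its first block dropped. In that case $n_1$ can be \emph{larger} than $l_1$ (e.g.\ $\al=\vte\vtz^{5}0<\be=\vte\vtz\vte\vtz^{5}0$), and none of your three cases applies. The paper handles this case by applying the induction hypothesis to $\al<\vte\vtz^{l_2}\ldots\bes$ and then separately analyzing $l_2\leq l_1$ versus $l_2>l_1$.

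Second, and more seriously, your ``decisive'' computation in the case $n_1<l_1$ is false. The sums $\om^{n_1}+\cdots+\om^{n_p}$ are \emph{not} in Cantor normal form (the paper explicitly warns about this), and the later exponents $n_i$ are in no way bounded by $l_1$: the hypothesis $\al<\be$ only constrains the tail $\vte\vtz^{n_2}\ldots\als$ as a whole term via $k_1\al_0<\be$, not the individual $n_i$. For instance $\al=\vte\vtz\vte\vtz^{5}0<\be=\vte\vtz^{2}\vte\vtz^{5}0$ has $n_1=1<2=l_1$ but $\om^{n_1}+\om^{n_2}=\om^{5}=\om^{l_1}+\om^{l_2}$, so your claimed strict inequality $A_p<\om^{l_1}\leq B_q$ of the $\Om_1$-coefficients fails (here they are \emph{equal}, and the comparison is only decided by the trailing tower-sum). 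This is exactly why the paper does not compare the $\Om_1$-coefficients directly; instead it proves a lexicographic inequality between the full tuples $(\om^{n_1}+\cdots+\om^{n_p},\als,\om^{n_1}+\cdots+\om^{n_{p-1}},\ldots,\om^{n_1})$ and the analogous tuple for $\be$, by induction, using the conjunct $\vte\vtz^{n_2}\ldots\als<\be$ to get the statement for the shorter tuple and then tracking what prepending $\om^{n_1}$ (added on the left of every component) does to the lexicographic order. Without that mechanism your induction does not close in the $n_1<l_1$ case.
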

  \begin{proof}
Note that $f(\om^{n_1}+\cdots +\om^{n_p}) = \om^{n_1}+\cdots +\om^{n_p} + n_p$ and that $\om^{n_1}+\cdots +\om^{n_p}$ is not necessarily in Cantor normal form.
 We prove by induction on $lh(\alpha)-lh(\alpha') + lh(\beta) - lh(\beta')$ that the assumption yields
 \begin{align*}
 & ( \om^{n_1}+\cdots +\om^{n_p} , \alpha' , \om^{n_1}+\cdots +\om^{n_{p-1}} ,\dots , \om^{n_1} ) \\
 <_{lex}{} &{} (\om^{l_1}+\cdots +\om^{l_q}, \beta' , \om^{l_1}+\cdots +\om^{l_{q-1}},\dots , \om^{l_1}).
 \end{align*}

From this inequality, the lemma follows.

\medskip

If $lh(\alpha) = lh(\alpha')$, then $p=0$. If $q>0$, then this is trivial, so we can assume that $q$ is also $0$. But then $\om^{n_1}+\cdots +\om^{n_p}  = \om^{l_1}+\cdots +\om^{l_q} = 0$ and $\alpha' = \alpha < \beta = \beta'$.
Now assume that $p>0$. It is impossible that $q=0$. $\alpha < \beta$ yields either
$\vte \vtz^{n_1}\ldots\vte \vtz^{n_p}\als<\vte \vtz^{l_2}\ldots\vte \vtz^{l_q} \be'$
or
$ \vtz^{n_1}\ldots\vte \vtz^{n_p}\als< \vtz^{l_1}\ldots\vte \vtz^{l_q}\bes$
and $\vte \vtz^{n_2}\ldots\vte \vtz^{n_p}\als<\vte \vtz^{l_1}\ldots\vte \vtz^{l_q}\bes$.

\medskip

In the former case, the induction hypothesis yields
 \begin{align*}
 & ( \om^{n_1}+\cdots +\om^{n_p} , \alpha' , \om^{n_1}+\cdots +\om^{n_{p-1}} ,\dots , \om^{n_1} ) \\
 <_{lex}{} &{} (\om^{l_2}+\cdots +\om^{l_q}, \beta' , \om^{l_2}+\cdots +\om^{l_{q-1}},\dots , \om^{l_2}).
  \end{align*}
If $l_2 \leq l_1$, then trivially
 \begin{align*}
& {} (\om^{l_2}+\cdots +\om^{l_q}, \beta' , \om^{l_2}+\cdots +\om^{l_{q-1}},\dots ,\omega^{l_2} , \om^{l_1})\\
  <_{lex}{} &{} (\om^{l_1}+\cdots +\om^{l_q}, \beta' , \om^{l_1}+\cdots +\om^{l_{q-1}},\dots , \omega^{l_1} + \omega^{l_2} , \om^{l_1}).
 \end{align*}
  If $l_2 > l_1$, then
  \begin{align*}
  & (\om^{l_2}+\cdots +\om^{l_q}, \beta' , \om^{l_2}+\cdots +\om^{l_{q-1}},\dots , \om^{l_2})\\
 ={} & {}(\om^{l_1}+ \omega^{l_2}+\cdots +\om^{l_q}, \beta' , \om^{l_1} +  \om^{l_2}+\cdots +\om^{l_{q-1}},\dots , \om^{l_1}+ \om^{l_2})\\
<_{lex} {} & {} (\om^{l_1}+\cdots +\om^{l_q}, \beta' , \om^{l_1}+\cdots +\om^{l_{q-1}},\dots , \omega^{l_1} + \omega^{l_2} , \om^{l_1}).
 \end{align*}
\medskip

Assume that we are in the latter case. $ \vtz^{n_1}\ldots\vte \vtz^{n_p}\als< \vtz^{l_1}\ldots\vte \vtz^{l_q}\bes$ yields $n_1<l_1$ or $n_1 = l_1$ and $\vte \vtz^{n_2}\ldots\vte \vtz^{n_p}\als<\vte \vtz^{l_2}\ldots\vte \vtz^{l_q}\bes$.

\medskip

Suppose $n_1 < l_1$. The induction hypothesis on
\[\vte \vtz^{n_2}\ldots\vte \vtz^{n_p}\als<\vte \vtz^{l_1}\ldots\vte \vtz^{l_q}\bes\]
implies
 \begin{align*}
 & ( \om^{n_2}+\cdots +\om^{n_p} , \alpha' , \om^{n_2}+\cdots +\om^{n_{p-1}} ,\dots , \om^{n_2} ) \\
 <_{lex}{} &{} (\om^{l_1}+\cdots +\om^{l_q}, \beta' , \om^{l_1}+\cdots +\om^{l_{q-1}},\dots , \om^{l_1}).
 \end{align*}
 Let
 \begin{align*}
 s := {} & {}  ( \om^{n_2}+\cdots +\om^{n_p} , \alpha' , \om^{n_2}+\cdots +\om^{n_{p-1}} ,\dots , \om^{n_2} ) \\
s' := {} &{} (\om^{l_1}+\cdots +\om^{l_q}, \beta' , \om^{l_1}+\cdots +\om^{l_{q-1}},\dots , \om^{l_1}).
 \end{align*}
Note that $lh(s) = p$ and $lh(s') = q+1$. If $lh(s) < lh(s')$ and $s_i = s'_i$ for all $i<lh(s)$, then
  \begin{align*}
 & ( \om^{n_1}+\cdots +\om^{n_p} , \alpha' , \om^{n_1}+\cdots +\om^{n_{p-1}} ,\dots , \om^{n_1} ) \\
= {}  & {} ( \om^{n_2}+\cdots +\om^{n_p} , \alpha' , \om^{n_2}+\cdots +\om^{n_{p-1}} ,\dots , \om^{n_2}, \om^{n_1} ) \\
 <_{lex}{} &{} (\om^{l_1}+\cdots +\om^{l_q}, \beta' , \om^{l_1}+\cdots +\om^{l_{q-1}},\dots , \om^{l_1}),
 \end{align*}
 where for the last inequality we need $n_1 < l_1$ if $p=q$. If there exists an index $j< \min\{lh(s),lh(s')\}$ such that $s_j < s'_j$ and $s_i = s'_i$ for all $i<j$, then
  \begin{align*}
   & ( \om^{n_1}+\cdots +\om^{n_p} , \alpha' , \om^{n_1}+\cdots +\om^{n_{p-1}} ,\dots , \om^{n_1} ) \\
 & ( \om^{n_2}+\cdots +\om^{n_p} , \alpha' , \om^{n_2}+\cdots +\om^{n_{p-1}} ,\dots , \om^{n_2}, \om^{n_1} ) \\
 <_{lex}{} &{} (\om^{l_1}+\cdots +\om^{l_q}, \beta' , \om^{l_1}+\cdots +\om^{l_{q-1}},\dots , \om^{l_1}).
 \end{align*}

 Now assume $n_1 = l_1$. The induction hypothesis on $\vte \vtz^{n_2}\ldots\vte \vtz^{n_p}\als<\vte \vtz^{l_2}\ldots\vte \vtz^{l_q}\bes$ implies
   \begin{align*}
 & ( \om^{n_2}+\cdots +\om^{n_p} , \alpha' , \om^{n_2}+\cdots +\om^{n_{p-1}} ,\dots , \om^{n_2} ) \\
 <_{lex}{} &{} (\om^{l_2}+\cdots +\om^{l_q}, \beta' , \om^{l_2}+\cdots +\om^{l_{q-1}},\dots , \om^{l_2}).
 \end{align*}
 Let
   \begin{align*}
s := {} & {} ( \om^{n_2}+\cdots +\om^{n_p} , \alpha' , \om^{n_2}+\cdots +\om^{n_{p-1}} ,\dots , \om^{n_2} ) \\
 s' := {} &{} (\om^{l_2}+\cdots +\om^{l_q}, \beta' , \om^{l_2}+\cdots +\om^{l_{q-1}},\dots , \om^{l_2}).
 \end{align*}
Note that $lh(s) = p$ and $lh(s') = q$. If $lh(s) < lh(s')$ and $s_i = s'_i$ for all $i<lh(s)$, then one can easily prove
  \begin{align*}
 & ( \om^{n_1}+\om^{n_2} + \cdots +\om^{n_p} , \alpha' , \om^{n_1}+\cdots +\om^{n_{p-1}} ,\dots , \om^{n_1} + \om^{n_2} ) \\
 <_{lex}{} &{} (\om^{l_1}+ \om^{l_2} +\cdots +\om^{l_q}, \beta' , \om^{l_1}+\cdots +\om^{l_{q-1}},\dots , \om^{l_1}+ \om^{l_2}),
 \end{align*}
hence
  \begin{align*}
 & ( \om^{n_1}+\om^{n_2} + \cdots +\om^{n_p} , \alpha' , \om^{n_1}+\cdots +\om^{n_{p-1}} ,\dots , \om^{n_1} + \om^{n_2} , \om^{n_1} ) \\
  <_{lex}{} &{} (\om^{l_1}+ \om^{l_2} +\cdots +\om^{l_q}, \beta' , \om^{l_1}+\cdots +\om^{l_{q-1}},\dots , \om^{l_1}+ \om^{l_2})\\
 <_{lex}{} &{} (\om^{l_1}+ \om^{l_2} +\cdots +\om^{l_q}, \beta' , \om^{l_1}+\cdots +\om^{l_{q-1}},\dots , \om^{l_1}+ \om^{l_2}, \om^{l_1}).
 \end{align*}

 If there exists an index $j< \min\{lh(s),lh(s')\}$ such that $s_j < s'_j$ and $s_i = s'_i$ for all $i<j$, then also
 \begin{align*}
 & ( \om^{n_1}+\om^{n_2} + \cdots +\om^{n_p} , \alpha' , \om^{n_1}+\cdots +\om^{n_{p-1}} ,\dots , \om^{n_1} + \om^{n_2} ) \\
 <_{lex}{} &{} (\om^{l_1}+ \om^{l_2} +\cdots +\om^{l_q}, \beta' , \om^{l_1}+\cdots +\om^{l_{q-1}},\dots , \om^{l_1}+ \om^{l_2}),
 \end{align*}
hence
  \begin{align*}
 & ( \om^{n_1}+\om^{n_2} + \cdots +\om^{n_p} , \alpha' , \om^{n_1}+\cdots +\om^{n_{p-1}} ,\dots , \om^{n_1} + \om^{n_2} , \om^{n_1} ) \\
 <_{lex}{} &{} (\om^{l_1}+ \om^{l_2} +\cdots +\om^{l_q}, \beta' , \om^{l_1}+\cdots +\om^{l_{q-1}},\dots , \om^{l_1}+ \om^{l_2}, \om^{l_1}).
 \end{align*}

 \end{proof}

 Define $\tau_0$ as the mapping from $T'_3[0]$ to $T^{all}_0 = E$ as follows: let $\tau_0 0 := 0$. If $\alpha = \vtn\vte\vtz^{n_1}\ldots\vte\vtz^{n_p}\als$ with $\als \in T'_3[0]$ and $n_1,\dots,n_p,p >0$, define $\tau_0 \al$ as
\[\om^{\om^{\om^{n_1}+\cdots +\om^{n_p}}}\cdot (\om^{\om^{n_1}+\cdots +\om^{n_p}+n_p}\cdot \tau_0\als+\om^{\om^{n_1}+\cdots +\om^{n_{p}}}+\om^{\om^{n_1}+\cdots +\om^{n_{p-1}}}+\cdots +\om^{\om^{n_1}}).\]

\begin{lemma} Assume $\al,\be \in T'_3[0]$. If $\alpha < \beta$, then $\tau_0 \alpha < \tau_0 \beta$.

%
%
%
%
%
  \end{lemma}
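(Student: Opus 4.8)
The plan is to prove, by induction on $lh(\al)+lh(\be)$, that $\al<\be$ implies $\tau_0\al<\tau_0\be$. For $\al\neq 0$ write $\al=\vtn\vte\vtz^{n_1}\ldots\vte\vtz^{n_p}\als$ and abbreviate $E_\al:=\om^{n_1}+\cdots+\om^{n_p}$ and $\mathrm{tl}_\al:=\om^{\om^{n_1}+\cdots+\om^{n_p}}+\cdots+\om^{\om^{n_1}}$, so that $\tau_0\al=\om^{\om^{E_\al}}\cdot(\om^{f(E_\al)}\cdot\tau_0\als+\mathrm{tl}_\al)$ with $f(E_\al)=E_\al+n_p$. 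Two elementary facts, each by a short induction on $lh$, feed the argument: (i) $\mathrm{tl}_\al<\om^{E_\al+1}\le\om^{f(E_\al)}$, and $f(E_\al)<\om^{E_\al}$; (ii) for $\ga\in T'_3[0]\setminus\{0\}$ the Cantor normal form of $\tau_0\ga$ has leading coefficient $1$ and its leading exponent $LE(\ga)$ begins, as an ordinal, with the summand $\om^{E_\ga}$.

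The base case $\al=0$ is immediate. Otherwise write $\al=\vtn\zeta$, $\be=\vtn\eta$, so $k_0\zeta=\als$ and $k_0\eta=\bes$; unfolding the ordering on $T$, $\al<\be$ holds either in Case A ($\eta<\zeta$ and $\al\le\bes$) or in Case B ($\zeta<\eta$ and $\als<\be$). In Case A the element $\bes$ is a proper subterm of $\be$, so by the induction hypothesis (or trivially, if $\al=\bes$) $\tau_0\al\le\tau_0\bes$, and we are done once we note the absorption fact $\tau_0\bes<\tau_0\be$. This holds because $\tau_0\be=\om^{\om^{E_\be}+f(E_\be)}\cdot\tau_0\bes+\om^{\om^{E_\be}}\cdot\mathrm{tl}_\be$ and $\om^{\de}\cdot\xi>\xi$ whenever $\de\ge 1$, $\xi\ge 1$; the case $\bes=0$ is even easier.

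In Case B the comparison $\zeta<\eta$ is one of $\vte$–$\vtz$-words, so Lemma \ref{vt1vt2} supplies the lexicographic inequality
\[(E_\al,\als,\om^{n_1}+\cdots+\om^{n_{p-1}},\ldots,\om^{n_1})<_{lex}(E_\be,\bes,\om^{l_1}+\cdots+\om^{l_{q-1}},\ldots,\om^{l_1}).\]
If this is decided at the $\als$-coordinate (so $E_\al=E_\be$, $\als<\bes$), then by the induction hypothesis $\tau_0\als<\tau_0\bes$, and using fact (i) one checks $\om^{f(E_\al)}\cdot\tau_0\als+\mathrm{tl}_\al<\om^{f(E_\be)}\cdot\tau_0\bes+\mathrm{tl}_\be$, so $\tau_0\al<\tau_0\be$. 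If it is decided at a later coordinate (so $E_\al=E_\be$, $\als=\bes$, and the remaining decreasing exponent sequences are lexicographically ordered), then $\mathrm{tl}_\al<\mathrm{tl}_\be$ — a lexicographically smaller decreasing sequence of exponents gives a strictly smaller sum of $\om$-powers — and again $\tau_0\al<\tau_0\be$.

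The remaining sub-case, the tuple comparison decided at the first coordinate so that $E_\al<E_\be$, is the main obstacle; here the side condition $\als<\be$ is indispensable. From it the induction hypothesis gives $\tau_0\als<\tau_0\be$. Since by fact (i) $f(E_\al)<\om^{E_\al}<\om^{E_\be}$, and by fact (ii) $LE(\be)$ begins with $\om^{E_\be}$, left-multiplication by $\om^{\om^{E_\al}+f(E_\al)}$ is absorbed into the leading term of $\tau_0\be$, and using $\om^{E_\al}+\mu=\mu$ for any $\mu$ beginning with $\om^{E_\be}$ one gets $\tau_0\al<\om^{LE(\be)}\le\tau_0\be$ in the case where the leading exponent of $\tau_0\als$ is strictly below $LE(\be)$. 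The delicate point is the borderline case in which $\tau_0\als$ and $\tau_0\be$ share the same leading term $\om^{LE(\be)}$ — which, by fact (ii) and $\tau_0\als<\tau_0\be$, forces $\tau_0\be$ to have a nonzero remainder: there one compares Cantor normal forms term by term, showing that the part of $\tau_0\al$ below $\om^{LE(\be)}$, namely $\om^{\om^{E_\al}+f(E_\al)}\cdot(\tau_0\als-\om^{LE(\be)})+\om^{\om^{E_\al}}\cdot\mathrm{tl}_\al$, stays strictly below the remainder $\tau_0\be-\om^{LE(\be)}$, using $\om^{E_\al}+E_\al<\om^{E_\be}$, the induction hypothesis once more, the form $\tau_0\be=\om^{\om^{E_\be}+f(E_\be)}\cdot\tau_0\bes+\om^{\om^{E_\be}}\cdot\mathrm{tl}_\be$, and the absorption identity of Case A. This last estimate is the iterated form of the two collapsing lemmata stated just before the definition of $E$; organizing it cleanly — either by a nested induction along the tail-chain of $\be$, or by strengthening the induction hypothesis to record how much of $\tau_0\be$ remains above $\tau_0\al$ — is the technical heart of the proof.
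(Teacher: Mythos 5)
Your overall architecture matches the paper's: the same induction on $lh(\al)+lh(\be)$, the same case split obtained by unfolding $\vtn\zeta<\vtn\eta$ (either $\al\leq\bes$, or $\zeta<\eta$ together with $\als<\be$), the same appeal to Lemma \ref{vt1vt2}, and the same treatment of Case A and of the two subcases with $E_\al=E_\be$. The problem is the subcase $E_\al<E_\be$, which you yourself flag as unfinished: you reduce it to a term-by-term comparison of Cantor normal forms, dispose of the situation where the leading exponent of $\tau_0\als$ lies strictly below $LE(\be)$, and then leave the ``borderline'' situation of equal leading terms as a plan (``a nested induction along the tail-chain of $\be$'', or ``strengthening the induction hypothesis'') rather than a proof. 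That is a genuine gap, and it sits exactly where the content of the lemma lives --- this is the step for which the side condition $\als<\be$ was recorded.

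The gap closes without any leading-term analysis. Since $\om^{E_\al}+f(E_\al)<\om^{E_\al+1}\leq\om^{E_\be}$ and $\tau_0\be=\om^{\om^{E_\be}}\cdot\xi$ for some $\xi>0$, one has $\om^{\om^{E_\al}+f(E_\al)}\cdot\tau_0\be=\om^{\om^{E_\al}+f(E_\al)+\om^{E_\be}}\cdot\xi=\tau_0\be$. Left multiplication by a nonzero ordinal is \emph{strictly} increasing in its right argument, so the induction hypothesis $\tau_0\als<\tau_0\be$ immediately gives
\[
\om^{\om^{E_\al}}\cdot\om^{f(E_\al)}\cdot\tau_0\als \;<\; \om^{\om^{E_\al}}\cdot\om^{f(E_\al)}\cdot\tau_0\be \;=\;\tau_0\be,
\]
with no case distinction on whether $\tau_0\als$ and $\tau_0\be$ share a leading term; your worry about the remainder $\tau_0\be-\om^{LE(\be)}$ never arises. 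Appending the tail $\om^{\om^{E_\al}}\cdot\mathrm{tl}_\al$, all of whose summands have exponents below $\om^{E_\al}\cdot 2\leq\om^{E_\be}$ while the last exponent of $\tau_0\be$ is $\om^{E_\be}+\om^{l_1}$, is then exactly the paper's ``standard observation'' that $\xi<\rho+\om^\mu$ and $\lambda<\mu$ imply $\xi+\om^\lambda<\rho+\om^\mu$. This is precisely how the paper finishes this case; your auxiliary facts about leading coefficients and the proposed strengthening of the induction hypothesis are not needed.
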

 \begin{proof}
  We prove this by induction on the length of $\alpha$ and $\beta$. If $\alpha =0$, then this is trivial. So we can assume that $0<\alpha <\beta$. Hence,
  \[\alpha = \vtn\vte\vtz^{n_1}\ldots\vte\vtz^{n_p}\als \]
  and
  \[ \be=\vtn\vte\vtz^{l_1}\ldots\vte\vtz^{l_q}\bes \]
   with $\als,\bes \in T'_3[0]$ and $n_1,\dots,n_p,l_1,\dots,l_q,p,q >0$.

  We want to prove that
  \begin{align*}
 & \tau_0\al=\om^{\om^{\om^{n_1}+\cdots +\om^{n_p}}}\cdot (\om^{\om^{n_1}+\cdots +\om^{n_p}+n_p}\cdot \tau_0\als+\om^{\om^{n_1}+\cdots +\om^{n_{p}}} +\cdots +\om^{\om^{n_1}})\\
<{} &{}  \tau_0\be= \om^{\om^{\om^{l_1}+\cdots +\om^{l_q}}}\cdot(\om^{\om^{l_1}+\cdots +\om^{l_q}+l_q}\cdot \tau_0\bes+\om^{\om^{l_1}+\cdots +\om^{l_{q}}} +\cdots +\om^{\om^{l_1}}).
 \end{align*}
$\al=\vtn\vte\vtz^{n_1}\ldots\vte\vtz^{n_p}\al'<\be=\vtn\vte\vtz^{l_1}\ldots\vte\vtz^{l_q}\be'$ yields two cases: either $\al \leq k_0(\vte\vtz^{l_1}\ldots\vte\vtz^{l_q}\be') =\be'$ or $\vte\vtz^{n_1}\ldots\vte\vtz^{n_p}\al'<\vte\vtz^{l_1}\ldots\vte\vtz^{l_q}\be'$ and $\al'<\be$. In the former case, the induction hypothesis yields $\tau_0 \alpha  \leq \tau_0 \beta' < \tau_0 \beta$.

\medskip

So assume the latter case. Then the induction hypothesis yields $\tau_0 \alpha' < \tau_0 \beta$. Using Lemma \ref{vt1vt2}, we know that
\begin{align*}
  & \Om_1\cdot (\om^{n_1}+\cdots +\om^{n_p})+\om^{\om^{n_1}+\cdots +\om^{n_p}+ n_p}\cdot \tau_0\als+\om^{\om^{n_1}+\cdots +\om^{n_{p}}}  \\
  & +\om^{\om^{n_1}+\cdots +\om^{n_{p-1}}}+\cdots +\om^{\om^{n_1}}\\
  < {}& {} \Om_1\cdot (\om^{l_1}+\cdots +\om^{l_q})+\om^{\om^{l_1}+\cdots +\om^{l_q}+l_q}\cdot \tau_0\bes+\om^{\om^{l_1}+\cdots +\om^{l_{q}}} \\
 & +\om^{\om^{l_1}+\cdots +\om^{l_{q-1}}}+\cdots + \omega^{\omega^{l_1}}.
  \end{align*}

If $\om^{n_1}+\cdots +\om^{n_p} < \om^{l_1}+\cdots +\om^{l_q}$, then
\[\om^{\om^{\om^{n_1}+\cdots +\om^{n_p}}}\cdot \om^{\om^{n_1}+\cdots +\om^{n_p}+n_p}\cdot \tau_0\als < \om^{\om^{\om^{n_1}+\cdots +\om^{n_p}}}\cdot \om^{\om^{n_1}+\cdots +\om^{n_p}+n_p}\tau_0 \beta = \tau_0 \beta.\]
 Therefore,
 \begin{align*}
 &\om^{\om^{\om^{n_1}+\cdots +\om^{n_p}}}\cdot (\om^{\om^{n_1}+\cdots +\om^{n_p}+n_p}\cdot \tau_0\als+\om^{\om^{n_1}+\cdots +\om^{n_{p}}} +\cdots +\om^{\om^{n_1}})\\
 < {}&  \om^{\om^{\om^{n_1}+\cdots +\om^{n_p}}}\cdot \om^{\om^{n_1}+\cdots +\om^{n_p}+n_p}\cdot \tau_0\als\\
&   +
 \om^{\om^{\om^{n_1}+\cdots +\om^{n_p}}} \cdot (\om^{\om^{n_1}+\cdots +\om^{n_{p}}} +\cdots +\om^{\om^{n_1}})\\
 < {}& \tau_0 \beta,
\end{align*}
because $ \om^{\om^{\om^{n_1}+\cdots +\om^{n_p}}} \cdot (\om^{\om^{n_1}+\cdots +\om^{n_{p}}} +\cdots +\om^{\om^{n_1}}) < \om^{\om^{\om^{l_1}+\cdots +\om^{l_q}}}$. We used the standard observation that $\xi < \rho + \omega^\mu$ and $\lambda < \mu$ imply $\xi + \omega^\lambda < \rho  + \omega^\mu$.

\medskip

Assume $\om^{n_1}+\cdots +\om^{n_p} = \om^{l_1}+\cdots +\om^{l_q}$ and $\tau_0 \alpha' < \tau_0 \beta'$. Then $\tau_0 \alpha < \om^{\om^{\om^{n_1}+\cdots +\om^{n_p}}}\cdot \om^{\om^{n_1}+\cdots +\om^{n_p}+n_p}\cdot (\tau_0\als+1) \leq \om^{\om^{\om^{n_1}+\cdots +\om^{n_p}}}\cdot \om^{\om^{n_1}+\cdots +\om^{n_p}+n_p}\cdot \tau_0\bes \leq \tau_0 \beta$.

\medskip

Assume $\om^{n_1}+\cdots +\om^{n_p} = \om^{l_1}+\cdots +\om^{l_q}$, $\tau_0 \alpha' = \tau_0 \beta'$ and $\om^{\om^{n_1}+\cdots +\om^{n_{p}}} +\om^{\om^{n_1}+\cdots +\om^{n_{p-1}}}+\cdots +\om^{\om^{n_1}} < \om^{\om^{l_1}+\cdots +\om^{l_{q}}} +\om^{\om^{l_1}+\cdots +\om^{l_{q-1}}}+\cdots + \omega^{\omega^{l_1}}$. Then trivially, $\tau_0 \alpha < \tau_0 \beta$.

\end{proof}

\subsubsection{The general part: $\vt_0\dots \vt_n\Omega_{n+1} \leq  \omega_{n+2}$}

We show that $otype(T'_{n+1}[0]) \leq \omega_{n+2}$. The previous section give us the idea of how to deal with this question, however the order-preserving embeddings in this subsection are slightly different than the ones proposed in the previous Subsection \ref{instructivepart} for technical reasons. Fix  a natural number $n$ strictly bigger than $0$.

\begin{definition} $\tau_m$ are functions from $T'_{n+1}[m]$ to $T_{m}^{all}$. We define $\tau_m \alpha$ for all $m$ simultaneously by induction on the length of $\alpha$.
If $m \geq n+1$, then $T'_{n+1}[m] = T'_{n+1}$ and define $\tau_m \alpha = \alpha = \Omega_{m} 0 + \omega^0 \alpha + 0$ for all $\alpha$. Note that $\alpha \in T'_{n+1} \subseteq T[n] \subseteq T[m-1]$.
Assume $m \leq n$. Define $\tau_m 0$ as $0$. Define $\tau_m \vartheta_j \alpha$ as $\vartheta_j \alpha$ if $j<m$. Define $\tau_m \vartheta_m \alpha$ as $\Omega_m \omega^{\beta} + \omega^{\omega^{\beta}} ( \omega^{f(\beta)} \cdot \tau_m k_m \alpha + \eta)+1$ if $\tau_{m+1} \alpha = \Omega_{m+1} \beta + \omega^{f(\beta)} k_m \alpha + \eta$.
\end{definition}

First we prove that $\tau_m$ is well-defined.

\begin{lemma}\label{taumwelldefined}
For all $m> 0$ and $\alpha \in T'_{n+1}[m]$, there exist uniquely determined $\beta$ and $\eta$ with $\eta < \omega^{f(\beta)}$ such that $\tau_m \alpha = \Omega_m \beta + \omega^{f(\beta)} k_{m-1} \alpha + \eta$. 
Furthermore, $\eta$ is either zero or a successor.
\end{lemma}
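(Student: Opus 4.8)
The plan is to prove the statement by induction on $lh(\alpha)$, treating all $m>0$ simultaneously, exactly as $\tau_m$ is itself defined. It is worth noting that the statement does double duty: besides unambiguity of the decomposition it records that the middle coefficient is literally $k_{m-1}\alpha$, which is what makes the clause defining $\tau_m\vt_m\alpha$ well-posed, since its hypothesis ``$\tau_{m+1}\alpha=\Om_{m+1}\be+\om^{f(\be)}k_m\alpha+\eta$'' is precisely the present statement at level $m+1$ (where $k_{(m+1)-1}\alpha=k_m\alpha$). I would first treat the easy cases: $\alpha=0$; $m\geq n+1$; and $\alpha=\vt_j\alpha'$ with $j<m\leq n$. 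In each of these $\tau_m\alpha$ equals $0$ or $\alpha$ itself, and since $S\alpha\leq m-1$ we have $\tau_m\alpha<\Om_m$ and $k_{m-1}\alpha=\alpha$; hence $\be=\eta=0$ is the decomposition (forced, using $\om^{f(0)}=1$), with $\eta$ zero.

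The substantial case is $m\leq n$ and $\alpha=\vt_m\alpha'$. Since $\vt_m\alpha'\in T'_{n+1}$ forces $S\alpha'\leq m+1$, the subterm $\alpha'$ lies in $T'_{n+1}[m+1]$ and has smaller length, so the induction hypothesis at level $m+1$ gives $\tau_{m+1}\alpha'=\Om_{m+1}\be+\om^{f(\be)}k_m\alpha'+\eta$ with $\eta<\om^{f(\be)}$ and $\eta$ zero or a successor; hence the defining clause applies and $\tau_m\vt_m\alpha'=\Om_m\om^\be+\om^{\om^\be}\bigl(\om^{f(\be)}\tau_mk_m\alpha'+\eta\bigr)+1$. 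Applying the induction hypothesis at level $m$ to $k_m\alpha'$ (again of smaller length, and lying in $T'_{n+1}[m]$ as a subterm with $S\leq m$) gives $\tau_mk_m\alpha'=\Om_m\hat\be+\om^{f(\hat\be)}k_{m-1}(k_m\alpha')+\hat\eta$ with $\hat\eta<\om^{f(\hat\be)}$. Substituting this and simplifying by means of the left-absorption $\om^c\cdot\Om_m=\Om_m$ (valid because $\om^c<\varepsilon_0<\Om_m$ with $\Om_m$ regular), left distributivity, and $\om^a\om^b=\om^{a+b}$, together with the combinatorial identity $k_{m-1}(\vt_m\alpha')=k_{m-1}(\alpha')=k_{m-1}(k_m\alpha')$ (an easy induction, since $k_{m-1}$ removes exactly the leading $\vt_j$ with $j\geq m$), one arrives at
\[\tau_m\vt_m\alpha'=\Om_m(\om^\be+\hat\be)+\om^{\om^\be+f(\be)+f(\hat\be)}\cdot k_{m-1}(\vt_m\alpha')+\bigl(\om^{\om^\be+f(\be)}\hat\eta+\om^{\om^\be}\eta+1\bigr).\]
The decisive point is that the recursion defining $f$ gives $f(\om^\be+\hat\be)=\om^\be+f(\be)+f(\hat\be)$ --- valid even though $\om^\be+\hat\be$ need not be in Cantor normal form --- so the middle exponent is exactly $f$ of the new first component $\om^\be+\hat\be$ (which lies in $E$), and the middle coefficient is $k_{m-1}(\vt_m\alpha')\in T[m-1]$, as required. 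The remainder $\eta'':=\om^{\om^\be+f(\be)}\hat\eta+\om^{\om^\be}\eta+1$ is a successor because of the trailing $+1$, and $\eta''<\om^{f(\om^\be+\hat\be)}$ follows from $\hat\eta<\om^{f(\hat\be)}$ and $\eta<\om^{f(\be)}$ via the standard estimate that $\xi<\rho+\om^\mu$ and $\lambda<\mu$ yield $\xi+\om^\lambda<\rho+\om^\mu$.

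Uniqueness is a general fact about ordinal arithmetic: for the value $\tau_m\alpha$, the first component $\be$ is forced as its left quotient by $\Om_m$, because the tail $\om^{f(\be)}k_{m-1}\alpha+\eta$ is below $\Om_m$ (both $\om^{f(\be)}<\varepsilon_0$ and $k_{m-1}\alpha\in T[m-1]$ are $<\Om_m$, so their product is $<\Om_m$ by regularity of $\Om_m$, and $\eta<\om^{f(\be)}$); once $\be$ is fixed so is $f(\be)$, and then $k_{m-1}\alpha$ and $\eta$ are determined by left division by $\om^{f(\be)}$. I expect the main obstacle to be the algebraic bookkeeping in the substantial case: checking that the exponents and coefficients emerging from the two applications of the induction hypothesis line up exactly through the defining equation for $f$, tracking the remainder correctly through the nested left multiplications, and applying the absorption rules and the identity $k_{m-1}(\vt_m\alpha')=k_{m-1}(k_m\alpha')$ in the right places.
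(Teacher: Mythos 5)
Your proposal is correct and follows essentially the same route as the paper: induction on $lh(\alpha)$, applying the hypothesis at level $m+1$ to $\alpha'$ and at level $m$ to $k_m\alpha'$, then collapsing the nested expression via $\omega^{\omega^{\beta}}\omega^{f(\beta)}\omega^{f(\hat\beta)}=\omega^{f(\omega^{\beta}+\hat\beta)}$ and the identity $k_{m-1}\vartheta_m\alpha'=k_{m-1}k_m\alpha'$, with the remainder bounded exactly as in the paper. You additionally spell out the trivial cases and the uniqueness argument, which the paper leaves implicit; the only nitpick is that the bound $\eta''<\omega^{f(\omega^{\beta}+\hat\beta)}$ is most cleanly justified by additive closure of $\omega^{f(\beta')}$ applied to the three summands rather than the quoted tail estimate, but the inequalities you list are the right ones.
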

\begin{proof}
We prove the first claim by induction on $lh(\alpha)$ and $n+1-m$. If $m\geq n+1$, then this is trivial by definition. Assume $0< m\leq n$. From the induction hypothesis, we know that there exist $\beta$, $\eta$, $\beta_1$, $\eta_1$ such that $\tau_{m+1} \alpha = \Omega_{m+1} \beta + \omega^{f(\beta)} k_m \alpha + \eta$ with $\eta < \omega^{f(\beta)}$ and $\tau_m k_m \alpha = \Omega_m \beta_1 + \omega^{f(\beta_1)} k_{m-1} k_m \alpha + \eta_1$ with $\eta_1 < \omega^{f(\beta_1)}$. We want to prove that there exist $\beta'$ and $\eta'$ such that $\tau_m \vt_m \alpha = \Omega_m \beta' + \omega^{f(\beta')} k_{m-1} \vt_m \alpha + \eta'$ with $\eta' < \omega^{f(\beta')}$.
Using the definition,
   \begin{eqnarray*}
   &&\tau_m \vartheta_m\al\\
   &=&\Omega_m \omega^{\beta} + \omega^{\omega^{\beta}} ( \omega^{f(\beta)} \cdot \tau_m k_m \alpha + \eta)+1\\
   &=&\Omega_m \omega^{\beta} + \omega^{\omega^{\beta}} ( \omega^{f(\beta)} \cdot  (\Omega_m \beta_1 + \omega^{f(\beta_1)} k_{m-1} k_m \alpha + \eta_1) + \eta)+1\\
   &=&\Omega_m (\omega^{\beta} + \beta_1) + \omega^{\omega^{\beta}}  \omega^{f(\beta)}(\omega^{f(\beta_1)} k_{m-1} k_m \alpha + \eta_1) + \omega^{\omega^{\beta}} \eta+1\\
   &=&\Omega_m (\omega^{\beta} + \beta_1) + \omega^{\omega^{\beta}}  \omega^{f(\beta)}\omega^{f(\beta_1)} k_{m-1} k_m \alpha +  \omega^{\omega^{\beta}}  \omega^{f(\beta)}\eta_1  + \omega^{\omega^{\beta}} \eta+1\\
 &=&\Omega_m (\omega^{\beta} + \beta_1) + \omega^{f(\omega^{\beta}+ \beta_1)}  k_{m-1} k_m \alpha +  \omega^{\omega^{\beta}}  \omega^{f(\beta)}\eta_1  + \omega^{\omega^{\beta}} \eta+1\\
  &=&\Omega_m (\omega^{\beta} + \beta_1) + \omega^{f(\omega^{\beta}+ \beta_1)}  k_{m-1} \vartheta_m \alpha +  \omega^{\omega^{\beta}}  \omega^{f(\beta)}\eta_1  + \omega^{\omega^{\beta}} \eta+1.
    \end{eqnarray*}

Define $\beta'$ as $\omega^\beta + \beta_1>0$ and $\eta'$ as $\omega^{\omega^{\beta}}  \omega^{f(\beta)}\eta_1  + \omega^{\omega^{\beta}} \eta+1$. Note that $\omega^{\omega^{\beta}}  \omega^{f(\beta)}\eta_1  < \omega^{\omega^{\beta}}  \omega^{f(\beta)} \omega^{f(\beta_1)}  = \omega^{f(\beta')}$, $\omega^{\omega^{\beta}} \eta < \omega^{\omega^{\beta} + f(\beta) }  \leq \omega^{f(\beta')}$ and $1 < \omega^{f(\beta')}$, hence $\eta' < \omega^{f(\beta')}$.

\medskip

That $\eta$ is either zero or a successor for all $m$ and $\alpha$ follows by construction.

\end{proof}

The argument in the proof of Lemma \ref{taumwelldefined} is crucially based on the property of $f$ regarding non-normal forms. 
The lemma implies that $\tau_m$ is well-defined for all $m>0$ and it does not make sense for $m=0$ because we did not define $k_{-1} \al$. But, looking to the definition of $\tau_0$, it is easy to see that $\tau_0$ is also well-defined.

\medskip

Note that one can easily prove $\tau_0 \alpha \in T^{all}_0$ for all $\alpha \in T'_{n+1}[0]$. Furthermore, $\tau_0 \alpha$ is also either zero or a successor ordinal. For all $m$ and $\alpha$, define $(\tau_m \alpha)^-$ as $\tau_m\alpha$, if $\eta$ is zero, and as $\tau_m \alpha$ but with $\eta-1$ instead of $\eta$, if $\eta$ is a successor. Additionally, note that if $m>0$ and $\tau_m \alpha = \Omega_m \beta + \omega^{f(\beta)} k_{m-1} \alpha + \eta$ we have $\beta >0$ iff $\eta >0$.

\medskip

 In the next theorem, we will again use the standard observation that $\xi < \rho + \omega^\mu$ and $\lambda < \mu$ imply $\xi + \omega^\lambda < \rho  + \omega^\mu$.

\begin{theorem}\label{tauiorderpreserving} For all natural $m$ and $\alpha, \beta \in T'_{n+1}[m]$, if $ \al<\be$, then $\tau_m\al<\tau_m\be$.
\end{theorem}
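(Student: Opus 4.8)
The plan is to prove the statement by a main induction on the quantity $n+1-m$ together with a side induction on $lh(\alpha) + lh(\beta)$, mirroring exactly the bookkeeping used in Lemma \ref{taumwelldefined}. I would first dispose of the base case $m \geq n+1$: here $\tau_m$ is the identity embedding of $T'_{n+1}$ into $T[m-1]$, so the claim reduces to the fact that $<$ on $T$ is a linear order, which is already available. I would also dispose of the trivial sub-cases $\alpha = 0$ and the case where $\alpha = \vartheta_j\alpha'$, $\beta = \vartheta_k\beta'$ with $j < k$ or with $j = k < m$ (where $\tau_m$ acts as the identity on the outermost symbol, so the claim follows from the side-induction hypothesis applied to the arguments, using that $S\alpha, S\beta \le m$ forces the $k_{m-1}$-components to behave correctly).

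The core of the argument is the case $\alpha = \vartheta_m\alpha'$ and $\beta = \vartheta_m\beta'$ with $0 < m \le n$. By the definition of $<$ on $T$, either $\alpha \unlhd k_m\beta'$, i.e. $\alpha \le k_m\beta'$ — more precisely $\alpha' < \beta'$ with $k_m\alpha' < \beta$, or $\alpha \le k_m\beta'$ outright — so there are really two subcases: (i) $\alpha \le k_m\beta'$, where the side-induction hypothesis gives $\tau_m\alpha \le \tau_m(k_m\beta')$, and one checks using the explicit formula for $\tau_m\beta = \Omega_m\omega^{\beta_\beta} + \omega^{\omega^{\beta_\beta}}(\omega^{f(\beta_\beta)}\cdot\tau_m k_m\beta' + \eta_\beta) + 1$ that $\tau_m(k_m\beta')$ sits strictly below $\tau_m\beta$; and (ii) $\alpha' < \beta'$ and $k_m\alpha' < \beta$. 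In subcase (ii) I would apply the main induction hypothesis (decreasing $n+1-m$) to $\tau_{m+1}$: from $\alpha' < \beta'$ in $T'_{n+1}[m+1]$ we get $\tau_{m+1}\alpha' < \tau_{m+1}\beta'$. Writing $\tau_{m+1}\alpha' = \Omega_{m+1}\beta_1 + \omega^{f(\beta_1)}k_m\alpha' + \eta_1$ and $\tau_{m+1}\beta' = \Omega_{m+1}\beta_2 + \omega^{f(\beta_2)}k_m\beta' + \eta_2$, the lexicographic comparison splits according to whether $\beta_1 < \beta_2$, or $\beta_1 = \beta_2$ and $k_m\alpha' < k_m\beta'$ (in the ordinal sense, which by Lemma \ref{taumwelldefined}'s uniqueness and the side-induction hypothesis on $\tau_m k_m\alpha' < \tau_m k_m\beta'$ is what we need), or $\beta_1 = \beta_2$, $k_m\alpha' = k_m\beta'$ and $\eta_1 < \eta_2$. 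Feeding $\beta_1$ through the exponentials $\omega^{\omega^{(\cdot)}}$ and using $\omega^\mu$-multiplicative-closure exactly as in Lemma \ref{vt1vt2} and the two numeric lemmata preceding Definition of $E$, one pushes through $\tau_m\alpha < \tau_m\beta$; the $+1$ summands and the side condition $k_m\alpha' < \beta$ (which via the side-induction hypothesis gives $\tau_m k_m\alpha' < \tau_m\beta$, controlling the ``middle coefficient'' so that adding the tail $\omega^{\omega^{\beta_1}}(\cdots) + 1$ stays below $\tau_m\beta$) are handled by the standard observation that $\xi < \rho + \omega^\mu$ and $\lambda < \mu$ imply $\xi + \omega^\lambda < \rho + \omega^\mu$.

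For $m = 0$ the function $\tau_0$ was defined separately but by an entirely analogous formula, so the same case analysis applies, now with the outer block $\omega^{\omega^{\omega^{n_1}+\cdots}}$ in place of $\Omega_m\omega^{\beta}$; here Lemma \ref{vt1vt2} does essentially all the work, exactly as in the proof of the lemma about $\tau_0$ on $T'_3[0]$, and the general version is obtained by replacing the concrete exponents $\om^{n_1}+\cdots+\om^{n_p}$ by the ordinal $\beta_1$ coming from $\tau_1$.

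The main obstacle, as in Lemma \ref{taumwelldefined}, is that the 4-tuple interpretation $\Omega_m\alpha + \omega^{f(\alpha)}\beta + \delta$ is only ``honestly'' an ordinal when $\gamma = \omega^{f(\alpha)}$, and intermediate terms produced by $\tau_m\vartheta_m$ genuinely fail Cantor normal form; one must therefore be careful to argue the comparison purely at the level of the lexicographic order on tuples and only invoke the ordinal arithmetic identities where $f$'s magical behaviour on non-normal forms (giving $\omega^{f(\beta)}\omega^{f(\beta_1)} = \omega^{f(\omega^\beta + \beta_1)}$) has already been used in Lemma \ref{taumwelldefined} to restore a well-formed tuple. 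Keeping the two induction parameters correctly ordered — decreasing $n+1-m$ in the appeals to $\tau_{m+1}$, decreasing $lh(\alpha)+lh(\beta)$ in the appeals to $\tau_m$ on proper subterms — and checking that every appeal is legitimate is the delicate part; the rest is the arithmetic already rehearsed in the instructive subsection.
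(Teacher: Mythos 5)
Your plan follows the paper's proof essentially verbatim: the same induction (the paper folds your two parameters into a single induction on $lh(\alpha)+lh(\beta)$ with $m$ universally quantified), the same case split coming from the definition of $<$ on $T$, the same reduction via Lemma \ref{taumwelldefined} to a lexicographic comparison of the tuple components, and the same arithmetic ingredients (multiplicative closure of $\omega^{\omega^{\beta_1}}$, the behaviour of $f$ on non-normal forms, and the standard observation about $\xi<\rho+\omega^\mu$). The only details you leave implicit that the paper must supply explicitly are the sub-split on whether $k_m\alpha'\geq\vartheta_m0$ (so that $\tau_m k_m\alpha'$ is a successor and the trailing $+1$ can be absorbed via $(\cdot)^-$) and the observation that the $\eta$-component of $\tau_{m+1}\beta'$ is nonzero whenever $\beta_1>0$, which is what licenses the final appeal to that standard observation.
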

\begin{proof}
We prove this theorem by induction on $lh\al+lh\be$. If $\alpha$ and/or $\beta$ are zero, this is trivial. So we can assume that $\alpha = \vti\als$ and $\be = \vt_j \bes$.
One can easily prove the statement if $i<j$, even if $j=m$. So we can assume that $i=j$. If $i=j<m$, then this is also easily proved. So suppose that $i=j=m$. 
If $m>n$, then $\tau_m \alpha = \alpha < \beta = \tau_m \beta$, hence we are done. So we can also assume that $m\leq n$.

\medskip

$\alpha = \vt_m \als < \vt_m \bes$ yields $\alpha \leq k_m \bes$ or $\als < \bes$ and $k_m \als < \be$.
In the former case, the induction hypothesis yields $\tau_m \alpha  \leq \tau_m k_m \bes < \tau_m \vt_m \bes $ $ = \tau_m \be$, where $\tau_m k_m \bes < \tau_m \vt_m \bes$ follows from the definition of $\tau_m \vt_m \bes$. (One can also look at the proof of Lemma \ref{taumwelldefined} for $m>0$. The case $m=0$ is straightforward.)
So we only have to prove the assertion in the latter case, i.e. if $\als < \bes$ and $k_m \als < \be$. The induction hypothesis yields $\tau_{m+1} \als <\tau_{m+1} \bes$ and $\tau_m k_m \als < \tau_m \be$.
Assume
\begin{align*}
\tau_{m+1}\als=&\Om_{m+1}\cdot \ale+\om^{f(\ale)}\cdot k_m\als+\alz,\\
\tau_{m+1}\bes=&\Om_{m+1}\cdot \bee+\om^{f(\bee)}\cdot k_m\bes+\bez,
\end{align*}
where $\alz< \om^{f(\ale)}$, $\bez< \om^{f(\bee)}$.
Then
\begin{align*}
\tau_m\al=&\Om_m\cdot \om^{\ale}+\om^{\om^\ale}(\om^{f(\ale)}\cdot \tau_m k_m\als+\alz)+1,\\
\tau_m\be=&\Om_m\cdot \om^{\bee}+\om^{\om^\bee}(\om^{f(\bee)}\cdot \tau_m k_m\bes+\bez)+1.
\end{align*}
The inequality $\tau_{m+1}\als<\tau_{m+1}\bes$ yields $\ale\leq\bee$.
Assume first that $\ale=\bee$. Then $\tau_{m+1}\als<\tau_{m+1}\bes$ yields $k_m\als\leq k_m\bes$.
If $k_m\als=k_m\bes$, then $\alz<\bez$ and $\tau_m\al<\tau_m\be$.
If $k_m\als<k_m\bes$ then the  induction hypothesis yields $\tau_m k_m\als<\tau_m k_m\bes$ and $\om^{f(\ale)}\cdot \tau_m k_m\als+\alz<\om^{f(\ale)}\cdot \tau_m k_m\bes+\bez$, since  $\alz< \om^{f(\ale)}$. We then find that $\tau_m\al<\tau_m\be$.
So we may assume that $\ale<\bee$.


\medskip

\textit{Case 1: $k_m \als < \vt_m0$.} Then $\tau_m k_m \als = k_m \als$.
Hence,
\begin{align*}
\tau_m\al= {} & {} \Om_m\cdot \om^{\ale}+\om^{\om^\ale}(\om^{f(\ale)}\cdot k_m\als+\alz)+1\\
<  {} & {} \Om_m\cdot \om^{\bee}+\om^{\om^\bee}(\om^{f(\bee)}\cdot \tau_m k_m\bes+\bez)+1\\
= {}  & {} \tau_m \be
\end{align*}
follows in a straightforward way.



\medskip

\textit{Case 2: $k_m \als \geq \vt_m0$.} Using the definition, we then have $(\tau_m k_m\als)^-  +1 = \tau_m k_m \als$. We show that
\begin{align*}
\om^{\om^\ale}\om^{f(\ale)}\cdot (\tau_m k_m\als)^-  + \om^{\om^\ale}(\om^{f(\ale)}+\alz)+1
<  {}  {} ( \tau_m \be)^-
\end{align*}
holds, hence
\begin{align*}
\tau_m\al= {} & {} \Om_m\cdot \om^{\ale}+\om^{\om^\ale}\om^{f(\ale)}\cdot (\tau_m k_m\als)^-  + \om^{\om^\ale}(\om^{f(\ale)}+\alz)+1\\
<  {} & {} \Om_m\cdot \om^{\ale}+ ( \tau_m \be)^-\\
= {}  & {} (\tau_m \be)^-\\
< {}  & {} \tau_m \be.
\end{align*}

We know $\tau_m k_m \als < \tau_m \be$, hence
\[(\tau_m k_m\als)^- <(\tau_m\be)^-
=\Om_m\cdot \om^{\bee}+\om^{\om^\bee}(\om^{f(\bee)}\cdot \tau_m k_m\bes+\bez).\]
Therefore, $\om^{\om^{\ale}}\om^{f(\ale)}\cdot (\tau_m k_m\als)^- <  \om^{\om^{\ale}}\om^{f(\ale)}\cdot (\tau_m \be)^- = (\tau_m \be)^-$ because $\om^{\om^{\ale}}\om^{f(\ale)} = \om^{f(\om^\ale)}$ and $f(\om^\ale) < \om^{\ale+1} \leq \om^\bee$.


The last term in the normal form of $ \om^{\om^\bee}\cdot \bez$ is bigger than $\om^{\omega^{\be_1}}$.
Note that $\tau_{m+1}\bes=\Om_{m+1}\cdot \bee+\om^{f(\bee)}\cdot k_m\bes+\bez$. The observation just before this theorem yields $\bez >0$ otherwise $\bee$ is zero, a contradiction  (because $\bee> \ale$). So if
\[ \om^{\om^\ale}(\om^{f(\ale)}+\alz)+1 < \om^{\om^{\be_1}},\]
we can finish the proof by the standard observation
$\xi < \rho + \omega^\mu$ and $\lambda < \mu$ imply $\xi + \omega^\lambda < \rho  + \omega^\mu$.

Now,
\begin{align*}
{} & {} \om^{\om^\ale} (\om^{f(\ale)}+\alz)+1 \\
= {} & {} \om^{\om^\ale} \om^{f(\ale)}+ \om^{\om^\ale}  \alz+1 \\
< {} & {}   \om^{\om^\bee}
\end{align*}
because $\om^{\om^{\ale}} \alz  < \om^{\om^{\ale}}\om^{f(\ale)} = \om^{f(\om^\ale)}$ and $f(\om^\ale) < \om^{\ale+1} \leq \om^\bee$.

  \end{proof}

\begin{lemma}\label{boundsonTnall}
For all $\alpha \in T'_{n+1}[m+1]$ we have that if $\tau_{m+1} \alpha = \Omega_{m+1} \beta + \omega^{f(\beta)} k_{m} \alpha + \eta$, then
\[\begin{cases}
\beta < \omega^0 = \omega_0 & \mbox{ if } m\geq n,\\
\beta < \omega_{n-m} & \mbox{ if }m<n.
\end{cases}\]
\end{lemma}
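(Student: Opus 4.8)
The plan is to prove Lemma \ref{boundsonTnall} by induction on $n+1-m$, i.e. downward on $m$, simultaneously with (or just after having available) the analogous bound coming from Lemma \ref{ordertypeT'_n} that $otype(T'_{n+1}[m+1])$ is bounded by $\omega_{n-m}$ for $m < n$ and by $\omega_0$ for $m \geq n$. The base case $m \geq n$ is immediate: for $m \geq n+1$ we have $\tau_{m+1}\alpha = \alpha = \Omega_{m+1}\cdot 0 + \omega^0\cdot\alpha + 0$, so $\beta = 0 < \omega^0$; for $m = n$ one checks directly from the definition of $\tau_{n+1}$ applied to $\vartheta_n$-terms that the $\Omega_{n+1}$-coefficient $\beta$ produced is always $0$ (since $T'_{n+1}[n+1] = T'_{n+1}$ and $\tau_{n+1}$ is the identity in tuple form there), or one uses that $T'_{n+1}[n]$ embeds into ordinals below $\omega_0$.

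For the inductive step, suppose $m < n$ and the bound holds at level $m+1$: for every $\gamma \in T'_{n+1}[m+2]$, writing $\tau_{m+2}\gamma = \Omega_{m+2}\beta_\gamma + \omega^{f(\beta_\gamma)}k_{m+1}\gamma + \eta_\gamma$, we have $\beta_\gamma < \omega_{n-m-1}$ (or $< \omega_0$ if $m+1 \geq n$). I would now trace the recursion in Lemma \ref{taumwelldefined}. For $\alpha = \vartheta_{m+1}\alpha' \in T'_{n+1}[m+1]$, the computation there gives $\beta' = \omega^{\beta} + \beta_1$, where $\beta$ comes from $\tau_{m+2}\alpha'$ (so $\beta < \omega_{n-m-1}$ by the induction hypothesis, whence $\omega^\beta < \omega_{n-m-1}$ since $\omega_{n-m-1}$ is a power-of-$\omega$-closed — in fact epsilon — ordinal for $n-m-1 \geq 1$), and $\beta_1$ comes from $\tau_{m+1}k_{m+1}\alpha'$, which is again of the form $\Omega_{m+1}\beta_1 + \cdots$ with $k_{m+1}\alpha' \in T'_{n+1}[m+1]$ a strict subterm, so by the inner induction (on $lh(\alpha)$) $\beta_1 < \omega_{n-m-1}$ as well. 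Hence $\beta' = \omega^\beta + \beta_1 < \omega_{n-m-1}$. Then, accumulating over the whole sequence of $\vartheta_{m+1}$-blocks inside $\alpha$ — of which there are finitely many, each contributing a summand below $\omega_{n-m-1}$ — the total $\Omega_{m+1}$-coefficient of $\tau_{m+1}\alpha$ is a finite sum of ordinals each $< \omega_{n-m-1}$, hence an ordinal $< \omega_{n-m-1}\cdot\omega = \omega_{n-m-1}$. But $\beta$ in the statement at level $m+1$ is meant to be strictly below the next exponential level: going from the $\Omega_{m+1}$-coefficient being $< \omega_{n-m-1}$ to the claimed $\beta < \omega_{n-m}$ we simply note $\omega_{n-m-1} \leq \omega_{n-m}$, so the bound is immediate; the sharper bound $\omega_{n-m}$ (rather than $\omega_{n-m-1}$) reflects that the coefficient can range up to $\omega^{(\text{something})}$ built from level-$(m+2)$ data and that $\omega_{n-m} = \omega^{\omega_{n-m-1}}$.

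The main obstacle I expect is bookkeeping the two nested inductions correctly — the outer one on $n+1-m$ (downward on $m$) and the inner one on $lh(\alpha)$, since $\beta'$ in Lemma \ref{taumwelldefined} depends both on level-$(m+2)$ data (the $\beta$) and on level-$(m+1)$ data of a proper subterm (the $\beta_1$ from $\tau_{m+1}k_{m+1}\alpha'$), so one must be scrupulous that both hypotheses are genuinely available at the point of use. A secondary subtlety is that the ordinals $\omega_{n-m}$ must be additively and exponentially closed for the argument "$\omega^\beta + \beta_1 < \omega_{n-m}$ when $\beta, \beta_1 < \omega_{n-m}$" to go through; since $\omega_k = \omega_k[1]$ is an $\varepsilon$-number for $k \geq 2$ and equals $\omega^\omega$, $\omega$ for $k = 1, 0$ respectively, one treats the small-$k$ cases (which correspond to $m$ close to $n$) by the base-case analysis and only needs the closure for $k \geq 2$, where it holds. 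Once the coefficient bound is in hand, the lemma follows, and feeding it into Theorem \ref{tauiorderpreserving} at $m = 0$ yields $otype(T'_{n+1}[0]) = otype(\tau_0[T'_{n+1}[0]]) \leq \omega^{\omega_{n+1}} = \omega_{n+2}$, completing the upper bound.
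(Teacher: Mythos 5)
Your overall strategy — a double induction, outer on the level $m$ (downward) and inner on $lh(\alpha)$, tracing the recursion of Lemma \ref{taumwelldefined} to see that the new $\Omega_{m+1}$-coefficient is $\omega^{\beta'}+\beta_1$ with $\beta'$ the $\Omega_{m+2}$-coefficient of $\tau_{m+2}\alpha'$ and $\beta_1$ the $\Omega_{m+1}$-coefficient of $\tau_{m+1}k_{m+1}\alpha'$ — is exactly the paper's, and the base case is handled correctly. But the inductive step contains a genuine ordinal-arithmetic error. You assert that $\beta'<\omega_{n-m-1}$ implies $\omega^{\beta'}<\omega_{n-m-1}$ "since $\omega_{n-m-1}$ is a power-of-$\omega$-closed --- in fact epsilon --- ordinal". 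The finite $\omega$-towers $\omega_k=\omega_k[1]$ are \emph{not} epsilon numbers and are not closed under $\gamma\mapsto\omega^\gamma$: for instance $\omega<\omega_2=\omega^\omega$ but $\omega^\omega\not<\omega^\omega$. Consequently your intermediate conclusion that the $\Omega_{m+1}$-coefficient is $<\omega_{n-m-1}$ is false, and the example $n-m-1=1$ already refutes it: the level-$(m+2)$ coefficient is a natural number $k$, and one application of $\vartheta_{m+1}$ produces $\omega^k\geq\omega=\omega_{n-m-1}$. The same confusion appears when you quote the inner induction hypothesis for $\beta_1$ as giving $\beta_1<\omega_{n-m-1}$; the lemma at level $m$ gives only $\beta_1<\omega_{n-m}$, which is what the inner induction actually supplies. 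Your closing remark that the stated bound $\omega_{n-m}$ "reflects that the coefficient can range up to $\omega^{(\text{something})}$" shows you sensed the problem, but the hedge does not repair the argument, since the chain of inequalities you wrote no longer closes.

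The correct step, which is what the paper does, is precisely the exponential degradation you tried to avoid: from $\beta'<\omega_{n-m-1}$ conclude only $\omega^{\beta'}<\omega^{\omega_{n-m-1}}=\omega_{n-m}$; from the inner induction on length conclude $\beta_1<\omega_{n-m}$; then $\omega^{\beta'}+\beta_1<\omega_{n-m}$ because $\omega_{n-m}=\omega^{\omega_{n-m-1}}$ is additively indecomposable. No "accumulation over $\vartheta_{m+1}$-blocks" is needed (the recursion through $\beta_1$ already absorbs the remaining blocks), and in any case your auxiliary claim $\omega_{n-m-1}\cdot\omega=\omega_{n-m-1}$ is also false. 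With the two induction hypotheses applied at the correct levels, the proof goes through exactly along the lines you set up.
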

\begin{proof}
We prove this by induction.
If $m\geq n$, then $\tau_{m+1} \alpha = \Omega_{m+1} 0 +  \omega^0 \alpha$, hence we are done. Assume $m<n$. If $\alpha = \vartheta_j \alpha'$ with $j<{m+1}$, then $\beta=0 < \omega_{n-m}$. Assume $\alpha = \vartheta_{m+1} \alpha'$. Assume
$\tau_{m+2} \alpha' = \Omega_{m+2} \beta' + \omega^{f(\beta')} k_{m+1} \alpha' + \eta'$ and
$\tau_{m+1} k_{m+1} \alpha' = \Omega_{m+1} \beta_1 + \omega^{f(\beta_1)} k_{m} k_{m+1} \alpha' + \eta_1$.
From the induction hypothesis, we know $\beta' < \omega_{n-m-1}$ and $\beta_1 < \omega_{n-m}$. Then
\begin{align*}
&\tau_{m+1} \alpha \\
={}& {}\Omega_{m+1} \omega^{\beta'} + \omega^{\omega^{\beta'}} (\omega^{f(\beta')}  ( \Omega_{m+1} \beta_1 + \omega^{f(\beta_1)} k_{m} k_{m+1} \alpha' + \eta_1)+ \eta')+1\\
={}& {}\Omega_{m+1} \omega^{\beta'} + \omega^{\omega^{\beta'}} \omega^{f(\beta')}  ( \Omega_{m+1} \beta_1 + \omega^{f(\beta_1)} k_{m}  \alpha' + \eta_1)+  \omega^{\omega^{\beta'}} \eta'+1\\
={}& {}\Omega_{m+1} (\omega^{\beta'}+ \beta_1)  + \omega^{\omega^{\beta'}} \omega^{f(\beta')}  ( \omega^{f(\beta_1)} k_{m}  \alpha' + \eta_1)+  \omega^{\omega^{\beta'}} \eta'+1.
\end{align*}
Now, $\omega^{\beta'} + \beta_1 < \omega_{n-m}$.
\end{proof}

\begin{lemma}\label{boundsonTnall[0]}
Let $n \geq 1$. For all $\alpha \in T'_{n+1}[0]$ we have that $\tau_{0} \alpha < \omega_{n+2}$.
\end{lemma}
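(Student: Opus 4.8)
The plan is to reduce the countable case $m=0$ to the bound on the first countable approximation, namely to $\tau_1$ on $T'_{n+1}[1]$, which is exactly the content of Lemma \ref{boundsonTnall} with $m=0$. First I would recall the definition of $\tau_0$: if $\alpha = \vt_0\be$ with $\be \in T'_{n+1}[1]$, then writing $\tau_1\be = \Omega_1\cdot\beta_1 + \omega^{f(\beta_1)}\cdot k_0\be + \eta_1$ (with $\eta_1 < \omega^{f(\beta_1)}$, as guaranteed by Lemma \ref{taumwelldefined}), we have
\[
\tau_0\alpha = \tau_0\vt_0\be = \omega^{\beta_1}\cdot 0 + \omega^{\omega^{\beta_1}}\bigl(\omega^{f(\beta_1)}\cdot \tau_0 k_0\be + \eta_1\bigr)+1 = \omega^{\omega^{\beta_1}}\bigl(\omega^{f(\beta_1)}\cdot \tau_0 k_0\be + \eta_1\bigr)+1,
\]
since $\Omega_0$ is interpreted as $0$. (If $\alpha = 0$ the claim is trivial.) So the key quantities to bound are $\beta_1$ and $\tau_0 k_0\be$.

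The main step is: by Lemma \ref{boundsonTnall} applied with $m = 0$, since $n\geq 1$ we get $\beta_1 < \omega_{n}$, hence $\omega^{\beta_1} < \omega_{n+1}$ and $\omega^{\omega^{\beta_1}} < \omega_{n+2}$. It remains to control $\tau_0 k_0\be$ and $\eta_1$. Here $k_0\be \in T'_{n+1}[0]$ is a strict subterm of $\alpha$, so by an outer induction on $lh(\alpha)$ I may assume $\tau_0 k_0\be < \omega_{n+2}$. Since $\omega_{n+2}$ is an $\epsilon$-number (it is $\omega^{\omega_{n+1}}$ with $\omega_{n+1}$ itself a limit of $\epsilon$-numbers, or directly: $\omega_{n+2}$ is closed under $\xi\mapsto\omega^\xi$), it is multiplicatively closed, so $\omega^{f(\beta_1)}\cdot\tau_0 k_0\be < \omega_{n+2}$; and $\eta_1 < \omega^{f(\beta_1)} < \omega^{\omega_n} \leq \omega_{n+1} < \omega_{n+2}$. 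Adding, $\omega^{f(\beta_1)}\cdot\tau_0 k_0\be + \eta_1 < \omega_{n+2}$, and multiplying by $\omega^{\omega^{\beta_1}} < \omega_{n+2}$ and adding $1$ stays below $\omega_{n+2}$ by multiplicative closure again. Thus $\tau_0\alpha < \omega_{n+2}$.

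I would then note the induction is genuinely an induction on $lh(\alpha)$ (the appeal to $\tau_0 k_0\be < \omega_{n+2}$ is the inductive call), with Lemma \ref{boundsonTnall} supplying the non-inductive bound on $\beta_1$ coming from the $\tau_1$-level. The only mildly delicate point — and the one I would state carefully — is the closure property of $\omega_{n+2}$: one needs that $\omega_{n+2} = \omega_{n+1}[1]$ raised one more level, i.e. $\omega_{n+2} = \omega^{\omega_{n+1}}$, is closed under $+$, $\cdot$, and $\xi \mapsto \omega^\xi$; equivalently $\omega_{n+2}$ is an $\epsilon$-number, which holds because $\omega_{n+1}$ is additively principal (indeed a fixed point of $\omega^{(\cdot)}$ for $n\geq 1$, since $\omega_{n+1} = \omega^{\omega_n}$ with $\omega_n$ already a limit ordinal — more carefully, $\omega_{n+1}$ is a limit of $\epsilon$-numbers hence itself closed under $\omega^{(\cdot)}$, so $\omega^{\omega_{n+1}} = \omega_{n+1}$-tower is an $\epsilon$-number). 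Once that closure is in hand every arithmetic combination above visibly stays below $\omega_{n+2}$, so this is the only real obstacle and it is routine ordinal arithmetic. Combining, $\tau_0$ maps $T'_{n+1}[0]$ into $\omega_{n+2}$, which is the statement; together with Theorem \ref{tauiorderpreserving} (for $m=0$) and Lemma \ref{ordertypeT'_n} this yields $\vt_0\vt_1\cdots\vt_n\Omega_{n+1} = otype(T_{n+1}[0]) = otype(T'_{n+1}[0]) \leq \omega_{n+2}$.
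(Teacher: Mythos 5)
Your proof follows essentially the same route as the paper's: induction on $lh(\alpha)$, with Lemma \ref{boundsonTnall} (at $m=0$) supplying $\beta' < \omega_n$ and the inductive hypothesis supplying $\tau_0 k_0\alpha' < \omega_{n+2}$, followed by ordinal arithmetic; the paper compresses the arithmetic into the observation $f(\beta') \leq \beta'\cdot\omega$ and $\omega^{f(\beta')}\tau_0 k_0\alpha' + \eta' < \omega^{f(\beta')}(\tau_0 k_0\alpha'+1) < \omega_{n+2}$, which is what your closure argument amounts to.

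One correction, precisely at the point you single out as delicate: $\omega_{n+2}$ is \emph{not} an $\varepsilon$-number and is \emph{not} closed under $\xi\mapsto\omega^\xi$ (e.g.\ $\omega_3=\omega^{\omega^\omega}<\varepsilon_0$, and $\omega^{\omega_3}=\omega_4\neq\omega_3$); likewise $\omega_{n+1}$ is not a limit of $\varepsilon$-numbers. Fortunately the property you actually invoke is only multiplicative (and additive) closure of $\omega_{n+2}$, and that does hold for the correct reason that $\omega_{n+2}=\omega^{\omega_{n+1}}$ with $\omega_{n+1}$ additively closed; the exponential steps in your argument never exponentiate something below $\omega_{n+2}$ back into $\omega_{n+2}$, but rather pass from $\beta'<\omega_n$ to $\omega^{\beta'}<\omega_{n+1}$ to $\omega^{\omega^{\beta'}}<\omega_{n+2}$, which is sound. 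With the justification restated this way the proof is correct and matches the paper's.
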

\begin{proof} We prove this by induction on $lh(\alpha)$.
If $\alpha =0$, this is trivial. Assume $\alpha \in T'_{n+1}[0]$, meaning $\alpha = \vartheta_0 \alpha'$ with $\alpha' \in T'_{n+1}[1]$. Assume $\tau_1 \alpha' = \Omega_1 \beta' + \omega^{f(\beta')} k_0 \alpha' + \eta'$ with $\eta' < \omega^{f(\beta')}$. Using Lemma \ref{boundsonTnall}, we know that $\beta' < \omega_{n-0}  = \omega_n$.
Additionally, the induction hypothesis yields $\tau_0 k_0 \alpha' < \omega_{n+2}$. Now,
\begin{align*}
\tau_0 \vartheta_0 \alpha'  = \omega^{\omega^{\beta'}}(\omega^{f(\beta')} \tau_0 k_0 \alpha' + \eta')+1.
\end{align*}
From the definition of $f$, one obtains that $f(\beta') \leq \beta' \cdot \omega$. Hence, $\omega^{f(\beta')} \tau_0 k_0 \alpha' + \eta' < \omega^{f(\beta')} (\tau_0 k_0 \alpha' +1) < \omega_{n+2}$, so $\tau_0 \vt_0 \alpha' < \omega_{n+2}$.
\end{proof}

\begin{corollary}\label{otypeT'_n}
$otype(T'_{n+1}) \leq \omega_{n+2}$.
\end{corollary}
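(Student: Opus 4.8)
The plan is to read off the corollary directly from the work already done in this subsection, chaining together the facts about $\tau_0$ established in Theorem \ref{tauiorderpreserving} and Lemma \ref{boundsonTnall[0]}. Concretely: by the last bullet of the definition of $T'_{n+1}$ we have $S\alpha \le n$ for every $\alpha \in T'_{n+1}$, so $T'_{n+1} = T'_{n+1}[n] \subseteq T'_{n+1}[0]$ is false — rather, we need to be careful that $T'_{n+1}$ and $T'_{n+1}[0]$ differ. So the first step is to reduce the order type of $T'_{n+1}$ to that of $T'_{n+1}[0]$. The standard trick (as in Lemma \ref{equalordertypeS_n+1andS_n}) is that $\alpha \mapsto \vartheta_0 \alpha$ maps $T'_{n+1}$ order-preservingly into $T'_{n+1}[1] \subseteq T'_{n+1}[0]$, or more simply that every $\alpha\in T'_{n+1}$ has $S\alpha\le n$, so that $\vartheta_0\alpha\in T'_{n+1}[0]$; since $\vartheta_0$ is strictly increasing on terms of the appropriate shape, $otype(T'_{n+1}) \le otype(T'_{n+1}[0])$.

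Next I would invoke Theorem \ref{tauiorderpreserving} with $m=0$: the map $\tau_0\colon T'_{n+1}[0] \to T_0^{all} = E$ is strictly order-preserving, so $otype(T'_{n+1}[0]) \le otype(\{\tau_0\alpha : \alpha \in T'_{n+1}[0]\}) \le otype(E)$. But in fact we do not need the order type of all of $E$; Lemma \ref{boundsonTnall[0]} tells us that the image of $\tau_0$ is contained in the ordinal $\omega_{n+2}$, i.e.\ $\tau_0\alpha < \omega_{n+2}$ for every $\alpha \in T'_{n+1}[0]$. An injective order-preserving map from a linear order into an ordinal $\lambda$ witnesses that the order type of the domain is at most $\lambda$. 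Combining this with the previous paragraph gives $otype(T'_{n+1}) \le otype(T'_{n+1}[0]) \le \omega_{n+2}$.

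The only genuinely non-routine ingredient has already been carried out, namely verifying in Lemma \ref{boundsonTnall} and Lemma \ref{boundsonTnall[0]} that the $\beta$-exponents appearing in $\tau_{m+1}\alpha$ stay below the relevant $\omega$-towers and hence that $\tau_0\alpha$ stays below $\omega_{n+2}$; the subtlety there was the magic behavior of $f$ on non-normal forms exploited in Lemma \ref{taumwelldefined}. For the corollary itself there is essentially no obstacle: it is a two-line deduction from the order-preservation of $\tau_0$ together with the uniform bound $\tau_0\alpha<\omega_{n+2}$. So the proof reads:
\begin{proof}
By Theorem \ref{tauiorderpreserving} the map $\tau_0$ is an order-preserving injection of $T'_{n+1}[0]$ into the ordinals, and by Lemma \ref{boundsonTnall[0]} its range is contained in $\omega_{n+2}$; hence $otype(T'_{n+1}[0]) \le \omega_{n+2}$. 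Since every $\alpha \in T'_{n+1}$ satisfies $S\alpha \le n$, the map $\alpha \mapsto \vartheta_0\alpha$ is an order-preserving injection of $T'_{n+1}$ into $T'_{n+1}[0]$, so $otype(T'_{n+1}) \le otype(T'_{n+1}[0]) \le \omega_{n+2}$.
\end{proof}
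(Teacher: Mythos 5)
Your first paragraph's core deduction is exactly the paper's own proof: Theorem \ref{tauiorderpreserving} makes $\tau_0$ an order-preserving injection of $T'_{n+1}[0]$ into the ordinals, Lemma \ref{boundsonTnall[0]} bounds its range by $\omega_{n+2}$, hence $otype(T'_{n+1}[0])\leq\omega_{n+2}$. That part is correct, and it is all that the paper itself establishes and all that the subsequent corollary uses (it explicitly invokes $otype(T'_{n+1}[0])$).

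The bridging step you added to reach the literal statement, namely that $\alpha\mapsto\vartheta_0\alpha$ is an order-preserving injection of $T'_{n+1}$ into $T'_{n+1}[0]$, is false and cannot be repaired by any other map. By clause 2 of the definition of $<$, every term with $S\geq 1$ exceeds every term with $S\leq 0$, so $T'_{n+1}[0]$ is a \emph{proper initial segment} of $T'_{n+1}$ for $n\geq 1$; hence $otype(T'_{n+1}[0])<otype(T'_{n+1})$ and no order-preserving injection of the larger into the smaller can exist. Concretely, $\vartheta_0$ is not monotone: for $\alpha=\vartheta_0\vartheta_1\vartheta_1 0<\beta=\vartheta_1 0$ one computes $k_0\alpha=\alpha>\vartheta_0\vartheta_1 0=\vartheta_0\beta$, so clause 3 fails and in fact $\vartheta_0\alpha>\vartheta_0\beta$. (Moreover $\vartheta_0\alpha$ need not even lie in $T'_{n+1}$, since that system only admits $\vartheta_i\alpha$ with $i<n$ when $S\alpha=i+1$ exactly.) The statement as printed is evidently a typo for $otype(T'_{n+1}[0])\leq\omega_{n+2}$: since the lower-bound section gives $\omega_{n+2}\leq otype(T'_{n+1}[0])$, the initial-segment observation shows the literal claim about $T'_{n+1}$ is actually false for $n\geq 1$. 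So keep your first paragraph as the proof of the intended statement and delete the reduction via $\vartheta_0$.
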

\begin{proof}
By Theorem \ref{tauiorderpreserving}, $\tau_0$ is an order preserving embedding from $T'_{n+1}[0] $ to $T^{all}_0 = E$. Furthermore, from Lemma \ref{boundsonTnall[0]}, we know $\tau_0 \alpha < \omega_{n+2}$ for all $\alpha \in T'_{n+1}[0]$. Hence $otype(T'_{n+1}) \leq \omega_{n+2}$.
\end{proof}

  \begin{corollary} $\vtn\vte\ldots\vt_n\Om_{n+1} \leq \om_{n+2}$.
\end{corollary}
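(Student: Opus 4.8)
The plan is to deduce this purely formally from what has already been set up, since no new mathematical content is needed. The key observation, recorded at the beginning of the section and labelled there as ``straightforward'', is that the ordinal $\vtn\vte\ldots\vt_n\Om_{n+1}$ is nothing other than $otype(T_{n+1}[0],<)$: in the full ordering of $T$, the supremum of the elements of $T_{n+1}[0]$ is reached by pushing each $\vt_i$ along the chain $\vt_0\vt_1\cdots\vt_n$ to its largest admissible argument, which from inside $\vt_n$ is $\Omega_{n+1}=\vt_{n+1}0$. So it suffices to show $otype(T_{n+1}[0],<)\le\om_{n+2}$.

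First I would invoke the remark following Lemma \ref{ordertypeT'_n}, which gives $otype(T'_k[m],<)=otype(T_k[m],<)$ for all $k,m$; with $k=n+1$ and $m=0$ this yields $otype(T_{n+1}[0],<)=otype(T'_{n+1}[0],<)$. Then, since $T'_{n+1}[0]\subseteq T'_{n+1}$, we get $otype(T'_{n+1}[0],<)\le otype(T'_{n+1},<)$, and Corollary \ref{otypeT'_n} gives $otype(T'_{n+1},<)\le\om_{n+2}$. Chaining these yields the claim. Equivalently one could argue directly: by Theorem \ref{tauiorderpreserving} the map $\tau_0$ order-embeds $(T'_{n+1}[0],<)$ into $E$, and by Lemma \ref{boundsonTnall[0]} one has $\tau_0\alpha<\om_{n+2}$ for every $\alpha\in T'_{n+1}[0]$, whence $otype(T'_{n+1}[0],<)\le\om_{n+2}$.

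There is no genuine obstacle remaining: all the substance resides in Theorem \ref{tauiorderpreserving}, Lemmas \ref{boundsonTnall}, \ref{boundsonTnall[0]} and \ref{ordertypeT'_n}, which are already established. The only point requiring a little care is the identification of the ordinal $\vtn\vte\ldots\vt_n\Om_{n+1}$ with $otype(T_{n+1}[0],<)$ — that is, that no term of $T_{n+1}[0]$ meets or exceeds $\vt_0\vt_1\cdots\vt_n\Omega_{n+1}$ in the ordering of $T$ while this bound is approached — but this is precisely the straightforward fact quoted at the start of the section. Combined with the matching lower bound already obtained, this finishes the proof that $\vtn\vte\ldots\vt_n\Om_{n+1}=\om_{n+2}$, and hence that $otype(T_{n+1}[0],<)=\om_{n+2}$ for $n\ge 1$.
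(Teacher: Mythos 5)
Your proposal is correct and follows essentially the same route as the paper: identify $\vtn\vte\ldots\vt_n\Om_{n+1}$ with $otype(T_{n+1}[0])$, pass to $T'_{n+1}[0]$ via Lemma \ref{ordertypeT'_n}, and apply the bound from Corollary \ref{otypeT'_n} (equivalently, the $\tau_0$-embedding of Theorem \ref{tauiorderpreserving} together with Lemma \ref{boundsonTnall[0]}). No substantive difference.
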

 \begin{proof}
 By Lemma \ref{ordertypeT'_n}, we know
 \[\vtn\vte\ldots\vt_n\Om_{n+1} = otype(T_{n+1}[0]) = otype(T'_{n+1}[0]),\]
 hence the previous corollary yields $\vtn\vte\ldots\vt_n\Om_{n+1} \leq \om_{n+2}$.
\end{proof}

 \section{Binary $\vartheta$-functions}

So the question remains whether a maximal linear extension of $\overline{\S}^w_n$ can be realized by a suitable choice of \textit{unary} functions. It turns out that this, as we will show, is possible using specific \textit{binary} theta-functions. However, the question if this doable with unary functions remains open.
Let $n$ be a fixed non-negative integer. In this subsection, we also use the notation $T_n$, however it is different then the previous one.

\begin{definition} Let $T_n$ be the least set such that the following holds. On $T_n$, define $S$ and $K_i$.
\begin{enumerate}
\item
$0\in T_n$, $S0:=-1$, $K_i0:=\emptyset$,
\item if $\al,\be\in T_n$, $S\al\leq i+1$ and $S\be\leq i < n$, then $\oti\al\be\in T_n$, $S\oti\al\be:=i$ and
\[
K_j\oti\al\be:=\begin{cases}
K_j\al\cup K_j\be& \mbox{ if $j<i$,}\\
\{\oti\al\be\}& \mbox{ otherwise. }
\end{cases}\]
\end{enumerate}
\end{definition}

Note that all indices in $T_n$ are strictly smaller than $n$.

\begin{definition}
For $\oti\al\be,\oti\ga\de \in T_n$, define $\oti\al\be<\oti\ga\de$ iff either $i<j$ or $i=j$ and one of the following alternatives holds:
\begin{itemize}
\item $\al<\ga\und K_i\al\cup\{\be\}<\ot_j\ga\de$,
\item $\al=\ga\und \be<\de$,
\item $\al>\ga\und \oti\al\be\leq K_i\ga\cup \{\de\}$.
\end{itemize}
Let $0< \oti \al\be$ for all $\oti\al\be \in T_n \backslash \{0\}$.
\end{definition}

Here $\oti\al\be\leq K_i\ga\cup \{\de\}$ means that $\oti\al\be\leq \xi$ for every $\xi \in K_i\ga\cup \{\de\}$.

\begin{lemma}\label{basicpropertyoverlinetheta}
For $\oti \al \be \in T_n$, we have $\be < \oti \al \be$.
\end{lemma}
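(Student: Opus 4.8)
The statement $\be<\oti\al\be$ for $\oti\al\be\in T_n$ is the binary analogue of Lemma~\ref{propertiesthetafunction}(2)--(3) for the unary $\vartheta$-functions, so I would imitate that proof. The natural induction is on $lh(\oti\al\be)$ (equivalently, a simultaneous induction on $lh(\al)+lh(\be)$), and I would actually prove the slightly stronger auxiliary statement needed to make the induction go through, namely: for every $\xi\in K_i\al\cup\{\be\}$ we have $\xi<\oti\al\be$ (equivalently $K_i\al\cup\{\be\}<\oti\al\be$ in the notation of the ordering). This is the hypothesis that appears in the first clause of the definition of $<$ on $T_n$, so having it available is exactly what one wants.

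\textbf{Key steps.} First, if $\be=0$ the claim is immediate from $0<\oti\al\be$. Assume $\be\neq 0$, so $\be=\vt_k\ga\de$ for some $k\le i$ (using $S\be\le i$ from the definition of $T_n$) and some $\ga,\de\in T_n$. I want $\vt_k\ga\de<\oti\al\be$. If $k<i$ this holds by the first clause of the ordering ($k<i$). If $k=i$, I must show $\vt_i\ga\de<\vt_i\al\be$ using one of the three alternatives. Here I would compare $\ga$ with $\al$: since $\be=\vt_i\ga\de$ is a proper subterm of $\oti\al\be$, by the induction hypothesis applied to $\be$ we already know $K_i\ga\cup\{\de\}<\be=\vt_i\ga\de$, and in particular $\de<\be$. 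Now split on the trichotomy of $\ga$ versus $\al$. If $\ga<\al$: I need $K_i\ga\cup\{\de\}<\oti\al\be$, which follows because each such element is $<\be<\oti\al\be$ (using the already-established $K_i\ga\cup\{\de\}<\be$ together with transitivity once I know $\be<\oti\al\be$ — so I should really prove $\be<\oti\al\be$ and the auxiliary statement $K_i\al\cup\{\be\}<\oti\al\be$ together by the same induction). If $\ga=\al$: then I need $\de<\be$, which I have. If $\ga>\al$: then I use the third alternative, needing $\vt_i\ga\de\le K_i\al\cup\{\de'\}$ — wait, this is where care is needed; I would instead argue that $\ga>\al$ forces $\vt_i\ga\de$ to be large, and by the well-foundedness/subterm structure this case contributes to showing $\oti\al\be$ dominates $\be$ via comparing $\be$ with elements of $K_i\al$. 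Concretely, since $\be\in K_i(\oti\al\be)$ (by the $K_j$-clause with $j=i\ge i$, $K_i\oti\al\be=\{\oti\al\be\}$ — actually $\be$ sits inside, so I track $\be$ through $k_i$-style reductions as in Lemma~\ref{propertiesthetafunction}(2)).

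\textbf{Main obstacle.} The delicate point, exactly as in the proof of Lemma~\ref{propertiesthetafunction}(2), is the case where the relevant subterm is \emph{larger} than the ``host'' (here $\ga>\al$): one cannot conclude directly and must instead peel off leading $\vartheta$-symbols and invoke the induction hypothesis on the reduced term together with the definition of $K_i$, using that $\be$ (or $\de$) is a \emph{strict} subterm to rule out equality. I expect the bookkeeping of which $K_i$-component $\be$ lands in — and verifying that the three-way case analysis of the ordering is exhaustive and each branch is discharged by the induction hypothesis on strictly shorter terms — to be the bulk of the work; the arithmetic/ordinal content is negligible since everything is at the purely syntactic level of $T_n$. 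So my proposal is: induct on $lh(\oti\al\be)$, simultaneously proving $\be<\oti\al\be$ and $K_i\al\cup\{\be\}<\oti\al\be$, with the $\ga>\al$ subcase handled by a subsidiary induction peeling leading $\vartheta_i$'s off $\be$ exactly as in Lemma~\ref{propertiesthetafunction}(2).
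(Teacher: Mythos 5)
Your overall shape (induction on length, strengthening the statement to $K_i\al\cup\{\be\}<\oti\al\be$, case split on the trichotomy of first arguments) is reasonable, and is consistent with the paper's one-line indication that one inducts on $lh(\be)$. But both nontrivial subcases are mishandled. In the subcase $\be=\ot_i\ga\de$ with $\ga<\al$, your argument is circular: you need $K_i\ga\cup\{\de\}<\oti\al\be$, and you propose to get it from $K_i\ga\cup\{\de\}<\be$ plus transitivity with $\be<\oti\al\be$ --- but $\be<\oti\al\be$ is exactly the inequality you are in the middle of establishing, and proving the auxiliary statement $K_i\al\cup\{\be\}<\oti\al\be$ simultaneously does not repair this, since $K_i\ga\cup\{\de\}$ is not contained in $K_i\al\cup\{\be\}$ and the comparison is against the full term $\oti\al\be$ rather than a shorter one. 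What actually breaks the circle is a subsidiary induction on the length of the \emph{left-hand} term (the analogue of the sub-induction on $lh(t)$ in Lemma~\ref{propertiesthetafunction}): one shows by induction on $lh(\xi)$ that every $\xi$ with $S\xi\leq i$ and $\xi\leq\be$ satisfies $\xi<\oti\al\be$. You do invoke such a peeling argument, but you attach it to the wrong case.

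The case you flag as delicate and leave unresolved, $\be=\ot_i\ga\de$ with $\ga>\al$, turns on the definition of the order rather than on the induction. There the third alternative requires $\ot_i\ga\de\leq K_i\al\cup\{\be\}$. Read existentially ($\leq$ \emph{some} element of the set), as is standard for $\vartheta$-style orderings and as linearity forces, the case is immediate: $\be$ itself lies in $K_i\al\cup\{\be\}$ and $\be\leq\be$. Read universally, as the paper literally states it, the claim can fail on $T_n$: with $n=2$, put $a=\ot_0 00$, $\al=\ot_1 0a$ (so $K_0\al=\{a\}$), $\ga=\ot_1 0(\ot_0 0a)>\al$ and $\be=\ot_0\ga 0$; then $\be>a$, so $\be\not\leq a\in K_0\al$ and hence $\be\not<\ot_0\al\be$. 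Under the literal reading the lemma thus only survives on $OT_n$, where $K_i\al=\emptyset$ --- which is the only place the paper uses it. Your sketch neither observes that this case is trivial under the intended reading nor that it is genuinely false under the written one; ``$\ga>\al$ forces $\ot_i\ga\de$ to be large'' does not engage with the condition that actually has to be verified.
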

\begin{proof}
This can be proven by induction on $lh(\be)$.
\end{proof}

\begin{definition} Define $OT_n\subseteq T_n$ as follows.
\begin{enumerate}
\item
$0 \in OT_n$,
\item if $\al,\be\in OT_n$, $S\al\leq i+1$, $S\be\leq i<n$ and $K_i\al=\emptyset$, then $\oti\al\be\in OT_n$
\end{enumerate}
\end{definition}

Note that $K_i \al = \emptyset$ yields that $\al$ does not contain any $\ot_j$ for $j\leq i$.

\begin{definition} If $K_0 \al = \emptyset$, let $\al^-$ be the result of replacing every occurence of $\ot_i$ by $\ot_{i-1}$.
\end{definition}

\begin{lemma}\label{property minus}
If $\al<\be\und K_0\al=K_0 \be=\emptyset$, then $\al^-<\be^-$ and $(K_{i+1}\al)^-=K_i\al^-.$
\end{lemma}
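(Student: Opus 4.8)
The plan is to prove the two assertions in the order listed, since the first will use the second. I would begin with the elementary observation, immediate from the definition of the $K_i$, that $K_0\al=\emptyset$ holds precisely when no symbol $\ot_0$ occurs anywhere in $\al$; hence the same is true of every subterm of $\al$, and in particular of every member of $K_i\al$, each such member being a subterm of $\al$ of the shape $\ot_l(\cdot)(\cdot)$ with $l\geq i$. Consequently, under the hypothesis $K_0\al=\emptyset$, the operation $(\cdot)^-$ is defined on $\al$, on all its subterms and on all elements of $K_i\al$, and, since every index occurring in such an $\al$ is $\geq 1$, the shift never produces a negative index. This makes all expressions in the statement meaningful.

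Next I would establish $(K_{i+1}\al)^-=K_i(\al^-)$, for every $i\geq 0$, by induction on $lh(\al)$. The case $\al=0$ is trivial. If $\al=\ot_j\ga\de$ then $K_0\al=\emptyset$ forces $j\geq 1$, so $\al^-=\ot_{j-1}\ga^-\de^-$ with $K_0\ga=K_0\de=\emptyset$. If $i+1<j$ then $K_{i+1}\al=K_{i+1}\ga\cup K_{i+1}\de$ and, since also $i<j-1$, $K_i(\al^-)=K_i(\ga^-)\cup K_i(\de^-)$; the induction hypothesis applied to $\ga$ and $\de$ gives the claim. If $i+1\geq j$ then both sides equal $\{\al^-\}$.

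Then I would show that $\al<\be$ together with $K_0\al=K_0\be=\emptyset$ implies $\al^-<\be^-$, by induction on $lh(\al)+lh(\be)$, walking through the clauses defining $<$ on $T_n$. The case $\al=0$ is trivial, so write $\al=\ot_i\ga\de$ and $\be=\ot_j\ga'\de'$; the hypothesis on $K_0$ forces $i,j\geq 1$ and makes $K_0$ empty on each of $\ga,\de,\ga',\de'$, so $\al^-=\ot_{i-1}\ga^-\de^-$ and $\be^-=\ot_{j-1}(\ga')^-(\de')^-$ with non-negative indices. If $i<j$ then $i-1<j-1$ and we are done. If $i=j$, treat the three alternatives. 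When $\ga<\ga'$ and $K_i\ga\cup\{\de\}<\be$: the induction hypothesis gives $\ga^-<(\ga')^-$, and applied to each element of $K_i\ga\cup\{\de\}$ (each of length smaller than $lh(\al)$ and with empty $K_0$) it gives $(K_i\ga\cup\{\de\})^-<\be^-$; by the second assertion $(K_i\ga)^-=K_{i-1}(\ga^-)$, so $K_{i-1}(\ga^-)\cup\{\de^-\}<\be^-$, and the first clause of the ordering yields $\al^-<\be^-$. When $\ga=\ga'$ and $\de<\de'$: the shifts of $\ga$ and $\ga'$ coincide and the induction hypothesis gives $\de^-<(\de')^-$, so the second clause applies. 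When $\ga>\ga'$ and $\al\leq K_i\ga'\cup\{\de'\}$: the induction hypothesis gives $\ga^->(\ga')^-$ and, applied to $\al$ against each element of $K_i\ga'\cup\{\de'\}$, gives $\al^-\leq K_{i-1}((\ga')^-)\cup\{(\de')^-\}$, again via the second assertion; the third clause then yields $\al^-<\be^-$.

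The argument is essentially bookkeeping, and I expect no serious obstacle. The two points requiring care are that the index shift stays non-negative — guaranteed throughout by the standing hypothesis $K_0=\emptyset$ — and that the induction hypothesis is invoked only for terms with empty $K_0$, which is exactly the subterm remark from the first step. The single structural ingredient is the handling of the first and third clauses of the ordering, where a set comparison of the form $K_i(\cdot)\cup\{\cdot\}<(\cdot)$ must be transported across $(\cdot)^-$; this is precisely what the second assertion supplies, after which each clause translates mechanically.
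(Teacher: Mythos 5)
Your proof is correct and follows the same route the paper intends (the paper only records that the lemma ``can be proven in a straightforward way by induction on $lh(\al)+lh(\be)$''); establishing the commutation $(K_{i+1}\al)^-=K_i(\al^-)$ first and then pushing the index shift through the three clauses of the ordering, using that commutation to transport the set comparisons $K_i(\cdot)\cup\{\cdot\}<(\cdot)$, is exactly the expected bookkeeping. One negligible slip: the members of $K_i\al$ are subterms of the form $\ot_l(\cdot)(\cdot)$ with $l\leq i$ rather than $l\geq i$, but your argument only uses that they are subterms (hence shorter and with empty $K_0$), so nothing is affected.
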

\begin{proof}
This can be proven in a straightforward way by induction on $lh(\al)+lh(\be)$.
\end{proof}

\medskip

Therefore, if $\oti \al \be \in OT_n$, then $\al^-$ is defined and it is an element of $OT_{n-1}$. Additionally, if $i=0$, then $S(\al^-), S(\be) \leq 0$.

\begin{definition} Define $OT_n[0]$ as $OT_n \cap \Omega_1$, where $\Omega_1 := \ot_0 00 $
\end{definition}

\begin{definition}
Define $o_1 : OT_1[0]  \to \om$ as follows. An arbitrary element of $OT_1$ is of the form $\ot_0 (0 , \ot_0(0, \dots \ot_0(0,0)\dots))$. Define the image of this element under $o_1$ as $k$ if $\ot_0(\cdot,\cdot)$ occurs $k$ many times. Define $o_n:OT_n[0]\to \om_{2n-1}$ for $n>1$ as follows.
\begin{enumerate}
\item $o_n(0):=0$,
\item $o_n(\ot_0\al\be):=\varphi_{o_{n-1}(\al^-)}o_n(\be).$
\end{enumerate}
\end{definition}

Note that $S(\al^-),S(\be)\leq 0$ if $\ot_0 \al \be \in OT_n[0]$.

\begin{theorem} For every $n \geq 1$, $o_n$ is order-preserving and surjective.
\end{theorem}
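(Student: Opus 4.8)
The plan is to prove the two properties --- order-preservation and surjectivity --- by simultaneous induction on $n$, using for each fixed $n$ a subsidiary induction on term length. For $n=1$ both claims are immediate from the explicit description of $OT_1[0]$: its elements are exactly the iterated terms $\ot_0(0,\ot_0(0,\dots))$, and $o_1$ counts the number of occurrences of $\ot_0$, which is clearly a length- (hence order-) preserving bijection onto $\om=\om_{1}=\om_{2\cdot 1-1}$. So assume $n>1$ and that $o_{n-1}:OT_{n-1}[0]\to\om_{2n-3}$ is order-preserving and surjective.

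For the inductive step I would first record the auxiliary facts already available: by Lemma \ref{property minus} the map $\al\mapsto\al^-$ is order-preserving on terms with $K_0=\emptyset$ and commutes suitably with the $K$-operators, and by the remark following that lemma $\al^-\in OT_{n-1}$ whenever $\ot_0\al\be\in OT_n$; moreover for $\ot_0\al\be\in OT_n[0]$ one has $S(\al^-),S(\be)\le 0$, so in fact $\al^-\in OT_{n-1}[0]$ and $\be\in OT_n[0]$, which makes the recursion $o_n(\ot_0\al\be):=\varphi_{o_{n-1}(\al^-)}o_n(\be)$ well-typed: $o_{n-1}(\al^-)$ lands in the notation system $\pi_0(n)$-style index set $T$ of size $\om_{2n-3}<\eps_0$ used by the Veblen construction in Theorem \ref{veblenhierarchy}, and $o_n(\be)$ is a previously-constructed value. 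Surjectivity then follows by unwinding: given $\xi<\om_{2n-1}=\om^{\om^{-1+\om_{2n-3}}}$ (using Theorem \ref{veblenhierarchy} with $otype(T)=\om_{2n-3}$), write $\xi$ in the form $\varphi_{t}\xi'$ with $t$ in the index system and $\xi'<\xi$, pull back $t$ to some $\al^-=o_{n-1}^{-1}(t)$ using the induction hypothesis, pull back $\xi'$ recursively on its (shorter) Veblen length to some $\be$, and check that $\ot_0\al\be$ is a legal $OT_n[0]$-term --- this needs $K_0\al=\emptyset$ (forced because we reconstruct $\al$ as an $OT_{n-1}$-term with all indices shifted up by one, hence containing no $\ot_0$) and the side conditions $S\al\le 1$, $S\be\le 0$, which hold by construction.

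For order-preservation I would prove $\gamma<\de\Rightarrow o_n(\gamma)<o_n(\de)$ for $\gamma,\de\in OT_n[0]$ by induction on $lh(\gamma)+lh(\de)$, the case where one of them is $0$ being trivial. Writing $\gamma=\ot_0\al\be$, $\de=\ot_0\al'\be'$, the definition of $<$ on $T_n$ splits into the three alternatives: (i) $\al<\al'$ and $K_0\al\cup\{\be\}<\ot_0\al'\be'$; (ii) $\al=\al'$ and $\be<\be'$; (iii) $\al>\al'$ and $\ot_0\al\be\le K_0\al'\cup\{\be'\}$. Since $K_0\al=K_0\al'=\emptyset$ for $OT_n$-terms, case (i) reduces to $\al<\al'$ together with $\be<\de$, case (iii) to $\al>\al'$ together with $\gamma\le\be'$. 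Applying Lemma \ref{property minus} to translate $\al<\al'$ (resp.\ $\al>\al'$) into $\al^-<(\al')^-$ (resp.\ $\al^->(\al')^-$) and then the induction hypothesis for $o_{n-1}$ to get $o_{n-1}(\al^-)<o_{n-1}((\al')^-)$, and the subsidiary induction hypothesis to get $o_n(\be)<o_n(\de)$ in case (i) or $o_n(\gamma)\le o_n(\be')$ in case (iii), one checks that these are exactly the three clauses in the definition of $<$ on the Veblen system $\varphi_{\bullet}\bullet$; hence $o_n(\gamma)=\varphi_{o_{n-1}(\al^-)}o_n(\be)<\varphi_{o_{n-1}((\al')^-)}o_n(\be')=o_n(\de)$. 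The converse direction (that $o_n$ reflects $<$, needed since $<$ on both sides is linear and the map is onto, giving a bijection) follows by trichotomy once surjectivity and the forward direction are in hand.

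The main obstacle I anticipate is the bookkeeping around the $K$-operators and the $\al\mapsto\al^-$ shift: one must be careful that $K_0\al=\emptyset$ genuinely holds for every subterm encountered (so that the case analysis in the ordering collapses to the clean two-condition form matching Veblen's clauses), that $\al^-$ stays inside $OT_{n-1}[0]$ rather than merely $OT_{n-1}$, and that the Veblen index set $T$ in Theorem \ref{veblenhierarchy} is being instantiated with the linear order $(OT_{n-1}[0],<)$ of order type $\om_{2n-3}$, which is legitimate precisely because the induction hypothesis already establishes $o_{n-1}$ is an isomorphism onto $\om_{2n-3}<\eps_0$. Once these typing issues are pinned down, the verification that the three ordering clauses correspond is essentially a line-by-line match and presents no real difficulty.
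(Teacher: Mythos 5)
Your proposal is correct and follows essentially the same route as the paper: induction on $n$ with an inner induction on term length, using Lemma \ref{property minus} to pass from $\al<\al'$ to $\al^-<(\al')^-$, exploiting $K_0\al=\emptyset$ to collapse the three clauses of the $\ot_0$-ordering onto the three clauses of the Veblen ordering, and unwinding Theorem \ref{veblenhierarchy} for surjectivity (which the paper leaves as "easy to prove"). Your version merely makes explicit some bookkeeping the paper leaves implicit, so no further comparison is needed.
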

\begin{proof}
The surjectivity of $o_n$ is easy to prove. We prove that $o_n$ is order-preserving. If $n=1$, this is trivial. Assume $n>1$ and assume that $o_{n-1}$ is order preserving. We will show that for all $\alpha, \beta \in OT_n[0]$, $\alpha < \beta$ yields $o_n(\alpha) < o_n(\beta)$. If $\alpha$ and/or $\beta$ are equal to zero, this is trivial. Assume $0 < \alpha < \beta$. Let $\alpha = \ot_0 \al_1 \al_2$ and $\be = \ot_0 \be_1 \be_2$. Then $\alpha < \beta$ iff one of the following cases holds:
\begin{enumerate}[noitemsep]
\item $\al_1 < \be_1$ and $\al_2 < \ot_0 \be_1 \be_2$,
\item $\al_1 = \be_1$ and $\al_2 < \be_2$,
\item $\al_1 > \be_1$ and $\ot_0 \al_1 \al_2 \leq \be_2$.
\end{enumerate}
Note that $\al_1 < \be_1$ yields $\al_1^- < \be_1^-$ by Lemma \ref{property minus}, hence $o_{n-1}(\al_1^-) < o_{n-1}(\be_1^-)$. Furthermore, the induction hypothesis yields that the previous case $i.$ is equivalent with the following case $i.$ for all $i$.
\begin{enumerate}[noitemsep]
\item $o_{n-1}\al_1^- < o_{n-1}\be_1^-$ and $o_n\al_2 < o_n\ot_0 \be_1 \be_2$,
\item $o_{n-1}\al_1^- = o_{n-1}\be_1^-$ and $o_n\al_2 < o_n\be_2$,
\item $o_{n-1}\al_1^- > o_{n-1}\be_1^-$ and $o_n\ot_0 \al_1 \al_2 \leq o_n\be_2$.
\end{enumerate}
Hence the above case $i.$ is equivalent with the following case $i.$:
\begin{enumerate}[noitemsep]
\item $o_{n-1}\al_1^- < o_{n-1}\be_1^-$ and $o_n\al_2 < \varphi_{o_{n-1}\be_1^-} o_n\be_2$,
\item $o_{n-1}\al_1^- = o_{n-1}\be_1^-$ and $o_n\al_2 < o_n\be_2$,
\item $o_{n-1}\al_1^- > o_{n-1}\be_1^-$ and $\varphi_{o_{n-1}\al_1^-} o_n\al_2 \leq o_n\be_2$.
\end{enumerate}
This is actually the definition of $\varphi_{o_{n-1}\al_1^-} o_n\al_2 < \varphi_{o_{n-1}\be_1^-} o_n\be_2$, so $o_n \ot_0 \al_1 \al_2 < o_n\ot_0 \be_1 \be_2$.

\end{proof}

This yields the following corollary.

\begin{corollary}\label{ordertypebinarytheta}
$otype(OT_n[0]) = \omega_{2n-1}$ if $n\geq 1$.
\end{corollary}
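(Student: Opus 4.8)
The plan is to read the corollary straight off the theorem just proved, which supplies, for every $n\geq 1$, a map $o_n\colon OT_n[0]\to\omega_{2n-1}$ that is both order-preserving and surjective. (Recall that for $n=1$ the target $\omega$ is exactly $\omega_1=\omega_1[1]=\omega^{\omega_0[1]}=\omega$, so the index $2n-1$ is the right one in that case too.) The first step I would take is to note that $<$ restricted to $OT_n[0]$ is a linear order --- this is checked exactly as in the $\pi$-case and as in \cite{BuchholzAPAL1986}, using Lemma \ref{basicpropertyoverlinetheta} and induction on lengths --- so that an order-preserving map out of $OT_n[0]$ is automatically injective: distinct $x,y$ are $<$-comparable, and order-preservation then forces $o_n(x)\neq o_n(y)$.

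Combining this injectivity with the surjectivity granted by the theorem, $o_n$ is an order isomorphism from $(OT_n[0],<)$ onto the ordinal $\omega_{2n-1}$ carrying its usual membership ordering. Since an ordinal is a well-order whose order type is itself, it follows that $(OT_n[0],<)$ is a well-order and $otype(OT_n[0])=\omega_{2n-1}$, which is exactly the assertion. As a side remark, this also re-derives that $OT_n[0]$ is genuinely well-founded, which one could alternatively obtain by pulling back well-foundedness along $o_n$.

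There is essentially no obstacle at the level of the corollary itself: all of the substance sits in the preceding theorem, whose proof must verify that $o_n$ is well-defined (so that $\al^-$ lands in $OT_{n-1}$, via Lemma \ref{property minus}), that it is onto, and --- the genuinely delicate point --- that the three clauses defining $<$ on $OT_n[0]$ correspond clause-for-clause, under $o_n$ together with the inductive hypothesis on $o_{n-1}$, to the three clauses in the definition of the comparison of Veblen terms $\varphi_{t_1}\alpha'$ versus $\varphi_{t_2}\beta'$. The only thing worth spelling out when writing up the corollary is the short chain ``order-preserving $\Rightarrow$ injective (by linearity of $<$) $\Rightarrow$ a bijection onto an ordinal $\Rightarrow$ that ordinal is the order type''.
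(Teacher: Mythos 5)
Your argument is correct and is exactly the route the paper intends: the corollary is read off the preceding theorem, since an order-preserving surjection from the linearly ordered $OT_n[0]$ onto the ordinal $\omega_{2n-1}$ is automatically injective and hence an order isomorphism. The paper gives no further detail, so your explicit spelling-out of the chain ``order-preserving $\Rightarrow$ injective $\Rightarrow$ isomorphism onto $\omega_{2n-1}$'' (including the sanity check for $n=1$) is a faithful, slightly more careful version of the same proof.
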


This ordinal notation system corresponds to a maximal linear extension of $\overline{\S}^s_n[0]=  \overline{\S}^w_n[0]$. 

\begin{definition} Define $f$ from $\overline{\S}^s_n$ to $OT_n$ as follows. $f(\varepsilon):=0$ if $\varepsilon$ is the empty sequence.
$f(ii_1\ldots i_k j\vec{s}):=\oti(f(i_1\ldots i_k))(f(j\vec{s}))$ if $i<i_1,\ldots,i_k$ and $j\leq  i$. This yields that $f(i)$ is defined as $\oti(0,0)$.
\end{definition}

\begin{lemma} $OT_n$ is a linear extension of $\overline{\S}^s_n$.
\end{lemma}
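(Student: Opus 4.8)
The plan is to show that the map $f\colon\overline{\S}^s_n\to OT_n$ defined just above is a quasi-embedding, i.e.\ that for all $s,t\in\overline{\S}^s_n$ we have $f(s)\le f(t)$ in $OT_n$ whenever $s\le_{gap}^s t$, and moreover that $f$ is injective and its image is all of $OT_n$; since $<$ on $OT_n$ is a linear order (this needs to be recorded, but it follows from the definition in the same way as for the unary $\vartheta$-system), this makes $(OT_n,<)$, transported back along $f$, a linear extension of $(\overline{\S}^s_n,\le_{gap}^s)$. First I would verify that $f$ is a bijection: every nonempty sequence $s$ over $\{0,\dots,n-1\}$ has a unique decomposition $s = i\,i_1\dots i_k\,j\vec u$ where $i$ is the first letter, $i_1,\dots,i_k$ is the maximal initial block of letters all $>i$ that follows it, and $j\le i$ is the next letter (with the convention that if no such $j$ exists the sequence is $i\,i_1\dots i_k$ and we recurse on the empty tail), and the gap-admissibility condition $s\in\overline{\S}^s_n$ together with $S$-bounds matches exactly the side conditions $S\al\le i+1$, $S\be\le i$ needed for $\oti\al\be$ to lie in $OT_n$; an induction on $lh(s)$ then gives bijectivity onto $OT_n$.

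Next comes the core: by induction on $lh(s)+lh(t)$, show $s\le_{gap}^s t \Rightarrow f(s)\le f(t)$. Write $s = i\,\vec p\, j\vec u$ and $t = i'\,\vec q\, j'\vec v$ with the block decompositions as above, so $f(s)=\oti(f(\vec p))(f(j\vec u))$ and $f(t)=\ot_{i'}(f(\vec q))(f(j'\vec v))$. The strictly increasing gap-embedding $g$ witnessing $s\le_{gap}^s t$ must send the first letter $s_0=i$ somewhere, and condition~3 of the strong gap-condition forces every letter of $t$ before $g(0)$ to be $\ge i$; one then argues that if $g(0)>0$ the embedding restricts to one of $s$ into a suffix of $t$ beginning at a block whose first letter is $\ge i$, which via the $<$-clause ``$\oti\al\be\le K_i\ga\cup\{\de\}$'' (plus Lemma~\ref{basicpropertyoverlinetheta}) delivers $f(s)\le f(t)$. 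If $g(0)=0$, then $s_0=t_0=i=i'$, and the gap-condition on the block following position $0$ in $t$ says precisely that the intervening letters are $\ge$ the next retained letter; splitting on whether $g$ maps $s$'s first block into $t$'s first block or skips past it, the induction hypothesis applied to $\vec p\le_{gap}^*\vec q$ (Higman-style) and to $j\vec u\le_{gap}^s j'\vec v$, combined with the three defining clauses of $<$ on $T_n$, yields the result — this mirrors exactly the argument already carried out in Lemma~\ref{lemma:Tnlhdisthegapconidition} for the unary case, just with the left argument of $\oti$ now carrying the skipped block instead of being absorbed into $k_i$.

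The main obstacle I expect is the bookkeeping around the strong condition (clause~3) and the interaction between the two arguments of the binary $\vartheta$: in the unary $\unlhd$ setting the skipped higher-label material was handled uniformly by $k_i$, whereas here one must track that the block $\vec p$ ending up as the \emph{first} argument of $\oti$ is compared against $K_i$ of the corresponding block in $t$ and that the $S$-bound side conditions are preserved throughout the recursion — in particular one needs that $S(f(\vec q))\le i'$ and the maximality of the block $\vec q$ so that the clause $\oti\al\be\le K_i\ga\cup\{\de\}$ can be invoked with the right $K_i\ga$. I would isolate a helper claim, proved by the same simultaneous induction, stating that $f$ takes sequences with first letter $\le i$ into $T_n$-terms $\tau$ with $S\tau\le i$, and that $K_i f(s)$ equals $\{f(s')\}$ or $\bigcup$ of the $K_i$ of the component sequences according to whether $s$'s first letter is $\le i$ or $>i$; with that in hand the three cases of $<$ translate mechanically into the three situations for the gap-embedding (embed into the current block, skip the block, or land in a later block), and the reverse implication $f(s)\le f(t)\Rightarrow s\le_{gap}^s t$ follows by the symmetric induction, giving that $f$ is in fact an order-isomorphism onto $(OT_n,\le)$ restricted to the range of $\le_{gap}^s$, hence $OT_n$ is a linear extension of $\overline{\S}^s_n$.
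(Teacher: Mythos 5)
Your proposal follows essentially the same route as the paper's proof: the same block decomposition $s=i\,i_1\dots i_k\,j\vec{s'}$, the same induction on $lh(s)+lh(t)$ with the case split on whether the first letter of $s$ is sent to position $0$ of $t$ or into the tail $q\vec{t'}$, and the same appeals to Lemma \ref{basicpropertyoverlinetheta} and to $K_i(f(i_1\dots i_k))=\emptyset$ when invoking the comparison clauses. The extras you add (bijectivity of $f$, the converse implication, the helper claim on $S$ and $K_i$) go beyond what the paper records but are consistent with it; the one small imprecision is that in the case $g(0)>0$ the third comparison clause is not actually needed --- transitivity together with $\be<\oti\al\be$ already gives $f(s)\leq f(q\vec{t'})<f(t)$, which is how the paper argues.
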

\begin{proof}
We prove by induction on the length of $s$ and $t$ that $ s \leq^s_{gap} t$ yields $f(s)\leq f(t)$.
If $s$ and/or $t$ are $\varepsilon$, then this is trivial. Assume not, then $s = i i_1 \dots i_k j \vec{s'}$  and $t = p p_1 \dots p_r q \vec{t'}$ with $i_1,\dots,i_k > i \geq j$ and $p_1,\dots,p_r > p \geq q$.
If $i<p$, then $f(s) \leq f(t)$ is trivial. Furthermore, $s \leq^s_{gap} t$ yields that $i>p$ is impossible. Therefore we can assume that $i=p$.
If the first $i$ of $s$ is mapped into $q\vec{t'}$ according to the inequality $s \leq^s_{gap} t$, then $i=q$ and $s \leq^s_{gap} \vec{qt'}$, hence $f(s) \leq f(qt')$. From Lemma \ref{basicpropertyoverlinetheta}, we know $f(q\vec{t'}) < f(t)$, hence we are done.
Assume that the first $i$ of $s$ is mapped onto the first $i=p$ of $t$ according to the $s \leq^s_{gap} t$ inequality. Then $j\vec{s'} \leq^s_{gap} q \vec{t'}$ and $i_1 \dots i_k \leq^s_{gap} p_1 \dots p_r$.
The induction hypothesis yields $f(j\vec{s'}) \leq f(q \vec{t'})$ and $f(i_1 \dots i_k) \leq f(p_1 \dots p_r)$. If $f(i_1 \dots i_k) = f(p_1 \dots p_r)$, then $f(s) \leq f(t)$ follows from $f(j\vec{s'}) \leq f(q \vec{t'})$. If $f(i_1 \dots i_k) < f(p_1 \dots p_r)$, then $f(s) \leq f(t)$ follows from $f(j\vec{s'}) \leq f(q \vec{t'}) < f(t)$ and $K_i(f(i_1 \dots i_k)) = \emptyset$.


\end{proof}

\begin{corollary} $OT_n[0]$ is a maximal linear extension of $\overline{\S}^w_n[0] = \overline{\S}^s_n[0]$.
\end{corollary}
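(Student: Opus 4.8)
The plan is to obtain the corollary by assembling three facts that are already available: that the map $f$ of the preceding definition transports the linear order of $OT_n$ to a linear extension of $(\overline{\S}_n,\leq^s_{gap})$ (the lemma just proved), that $otype(OT_n[0])=\om_{2n-1}$ (Corollary \ref{ordertypebinarytheta}), and that $\om_{2n-1}$ is exactly the maximal order type $o(\overline{\S}^w_n[0])$. None of these steps is deep; the work lies entirely in checking that everything restricts correctly to the ``$[0]$'' substructures.

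First I would verify that $f$ restricts to a bijection between $\overline{\S}^s_n[0]$ and $OT_n[0]$. The set $\overline{\S}_n[0]$ consists of the empty sequence together with the sequences whose first letter is $0$; under $f$ the empty sequence is sent to $0$ and a sequence with first letter $i$ to a term with head symbol $\ot_i$, so $f$ maps $\overline{\S}_n[0]$ precisely onto $\{0\}$ together with the terms of the form $\ot_0\al\be\in OT_n$, which is exactly $OT_n[0]$ (the set on which $o_n$ is defined). Since $f$ itself is a bijection, this restriction is a bijection as well. I would also record the elementary observation that on $\overline{\S}_n[0]$ the strong and weak gap-embeddability relations coincide, because clause $(3)$ of Definition \ref{definitieweakstrongnormalgapsequences} is vacuous once $s_0=0$; thus $(\overline{\S}_n[0],\leq^s_{gap})=(\overline{\S}_n[0],\leq^w_{gap})$, as was already noted earlier.

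Next I would invoke the preceding lemma, which gives $s\leq^s_{gap}t\imp f(s)\leq f(t)$, and transport the linear order $<$ of $OT_n[0]$ along $f$ to a linear order on $\overline{\S}_n[0]$ extending $\leq^s_{gap}=\leq^w_{gap}$. By Corollary \ref{ordertypebinarytheta} this order has order type $\om_{2n-1}$, hence is a well-order; and by the lemma and corollary at the end of Subsection \ref{Wpo with gap-condition} one has $o(\overline{\S}^w_n[0])=o(\overline{\S}^w_n)=\om_{2n-1}$. So this is a well-order on $\overline{\S}_n[0]$ that extends $\leq^w_{gap}$ and whose order type equals the maximal order type of $\overline{\S}^w_n[0]$; by the definition following Theorem \ref{dejonghandparikh1} it is therefore a maximal linear extension, which is the assertion. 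The only real ``obstacle'' is bookkeeping: one must confirm that $f$ sends $\overline{\S}^s_n[0]$ \emph{onto} $OT_n[0]$ rather than merely into $OT_n$, and one must chase the chain of equalities in Subsection \ref{Wpo with gap-condition} to pin down $o(\overline{\S}^w_n[0])=\om_{2n-1}$; once these are in place the conclusion is immediate from the order-type computation of Corollary \ref{ordertypebinarytheta}.
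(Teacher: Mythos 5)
Your argument is correct and is essentially the paper's own proof: restrict the lemma ($f$ transports $\leq^s_{gap}$ into the linear order on $OT_n$) to the $[0]$-substructures, and combine $otype(OT_n[0])=\om_{2n-1}$ with $o(\overline{\S}_n[0])=\om_{2n-1}$ to conclude maximality. The paper states this in two lines; your extra bookkeeping (that $f$ restricts to a bijection onto $OT_n[0]$ and that the strong and weak relations coincide there) is exactly the verification left implicit.
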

\begin{proof}
The previous lemma yields that $OT_n[0]$ is a linear extension of $\overline{\S}_n[0]$. We also know that $otype(OT_n[0]) = \omega_{2n-1} = o(\overline{\S}_n[0])$.
\end{proof}

\medskip

In a sequel project, we intend to determine the relationship between other ordinal notation systems \textit{without} addition with the systems studied here. More specifically, we intend to look at ordinal diagrams \cite{Takeuti}, Gordeev-style ordinal notation systems \cite{GordeevAML1989} and non-iterated $\vartheta$-functions \cite{prooftheoryofimpredicativesubsystemsofanalysis,WilkenSimultanetheta}. This will be published elsewhere.


\renewcommand{\abstractname}{Acknowledgements}

\begin{abstract}
The first author's research was supported through  grant 
``Abstract Mathematics for Actual Computation: Hilbert's Program in the 21st Century" from the John Templeton Foundation. The opinions expressed in this publication are those of the authors and do not necessarily reflect the views of the John Templeton Foundation.

The second author wants to thank his funding organization Fellowship of the Research Foundation - Flanders (FWO).
\end{abstract}

\bibliography{Universal-gapsequences}
\bibliographystyle{abbrv}

\end{document}